\documentclass{amsart} 
\oddsidemargin 0mm
\evensidemargin 0mm
\topmargin 0mm
\textwidth 160mm
\textheight 230mm
\tolerance=9999
\usepackage{amsmath,amssymb,amsthm,amsfonts,amsxtra,mathtools}
\usepackage{enumerate,verbatim,mathrsfs,comment,color,url}
\usepackage[all,2cell,ps,cmtip]{xy}
\usepackage[pagebackref]{hyperref}
\usepackage{graphicx}
\usepackage{verbatim}
\usepackage{cleveref}

\newcommand{\old}[1]{{\color{red} #1}}

\DeclareMathOperator{\ann}{ann}
\DeclareMathOperator{\Ass}{Ass}

\DeclareMathOperator{\cl}{Cl}

\DeclareMathOperator{\T}{T}
\DeclareMathOperator{\coker}{Coker}

\DeclareMathOperator{\depth}{depth}

\DeclareMathOperator{\End}{End}
\DeclareMathOperator{\Ext}{Ext}

\DeclareMathOperator{\gdim}{G-dim}

\DeclareMathOperator{\grade}{grade}
\DeclareMathOperator{\Hom}{Hom}

\DeclareMathOperator{\id}{id}
\DeclareMathOperator{\Image}{Image}

\DeclareMathOperator{\pd}{pd}

\DeclareMathOperator{\rank}{rank}

\DeclareMathOperator{\Spec}{Spec}
\DeclareMathOperator{\Soc}{Soc}
\DeclareMathOperator{\Supp}{Supp}
\DeclareMathOperator{\Tor}{Tor}

\DeclareMathOperator{\tr}{tr}
\DeclareMathOperator{\Tr}{Tr}

\DeclareMathOperator{\p}{\mathfrak p}
\DeclareMathOperator{\m}{\mathfrak m} 

\renewcommand{\ge}{\geqslant}
\renewcommand{\le}{\leqslant}

\newcommand{\fm}{\mathfrak{m}}
\newcommand{\fn}{\mathfrak{n}}
\newcommand{\fp}{\mathfrak{p}}
\newcommand{\fq}{\mathfrak{q}}
\newcommand{\fS}{\mathfrak{S}}
\renewcommand{\iff}{if and only if }

\newcommand{\syz}{\Omega}

\theoremstyle{plain}
\newtheorem{theorem}{Theorem}[section]
\newtheorem{lemma}[theorem]{Lemma}
\newtheorem{proposition}[theorem]{Proposition}
\newtheorem{corollary}[theorem]{Corollary}

\newenvironment{customtheorem}[1]
  {\innercustomtheorem}
  {\endinnercustomtheorem}

 \newenvironment{customcorollary}[1]
 {\innercustomcorollary}
 {\endinnercustomcorollary}
  
\theoremstyle{definition}
\newtheorem{definition}[theorem]{Definition}
\newtheorem{conjecture}[theorem]{Conjecture}

\newtheorem{para}[theorem]{}
\newtheorem{question}[theorem]{Question}
\newtheorem{setup}[theorem]{Setup}

\theoremstyle{remark}
\newtheorem{remark}[theorem]{Remark}

\numberwithin{equation}{section}

\title[Finite homological dimension of Hom, vanishing of Ext]{Finite homological dimension of Hom, vanishing of Ext, and applications to divisor class group}

\author{Souvik Dey}
\address{Department of Mathematics, University of Arkansas, Fayetteville, AR 72701, U.S.A}
\email{souvikd@uark.edu, dey0976@gmail.com} 
\urladdr{\url{https://orcid.org/0000-0001-8265-3301}}

\author[Dipankar Ghosh]{Dipankar Ghosh}
\address{Department of Mathematics, Indian Institute of Technology Kharagpur, West Bengal - 721302, India}
\email{dipankar@maths.iitkgp.ac.in, dipug23@gmail.com}
\urladdr{\url{https://orcid.org/0000-0002-3773-4003}}

\subjclass[2020]{13D07, 13D05, 13C13, 13C20, 13C40} 
\keywords{Auslander-Reiten conjecture; Vanishing of Ext; Homological dimensions; Residually faithful modules; $n$-semidualizing modules; Divisor class group} 
\thanks{Souvik Dey was partly supported by the Charles University Research Center program No.UNCE/SCI/022 and a grant GA CR 23-05148S from the Czech Science Foundation}    

\begin{document}  

\pagenumbering{arabic}
\thispagestyle{empty}
  
\begin{abstract}
 For finitely generated modules $M$ and $N $ over a commutative Noetherian local ring $R$, we give various sufficient criteria for detecting freeness of $M$ or $N$ via vanishing of some finitely many Ext modules $\Ext^i_R(M,N)$ and finiteness of certain homological dimension of $\Hom_R(M,N)$. Some of our results provide partial progress towards answering a question of Ghosh-Takahashi and also generalize their main results in many ways, for instance, by reducing the number of vanishing.
Certain special cases of our results allow us to address the Auslander-Reiten conjecture for modules whose (self-) dual has finite projective dimension. Along the way, we establish a new characterization of $I$-Ulrich modules of Dao-Maitra-Sridhar which we then apply to provide a negative answer to a question of Gheibi-Takahashi concerning characteristic modules. 
Among other techniques, we introduce and study certain generalizations of the notion of residually faithful modules of  Brennan-Vasconcelos and Goto-Kumashiro-Loan, which play a crucial role in our study. As some applications of our results, we provide affirmative answers to two questions raised by Tony Se on $n$-semidualizing modules. Namely, we show that over a local ring of depth $t$, every $(t-1)$-semidualizing module of finite G-dimension is free. Moreover, we establish that for normal domains which satisfy Serre's condition $(S_3)$ and are locally Gorenstein in codimension two, the class of $1$-semidualizing modules forms a subgroup of the divisor class group. These two groups coincide when, in addition, the ring is locally regular in codimension two.
\end{abstract}
 
\maketitle
\section{Introduction}
	
\begin{setup}\label{setup}
	Throughout this article, unless otherwise specified, $R$ is a commutative Noetherian ring, and all $R$-modules are assumed to be finitely generated.
\end{setup}

The aim of this paper is multi-fold. We begin by studying some variations of the celebrated Auslander-Reiten conjecture, and establish several criteria for finite projective or injective dimension of a module under certain vanishing of finitely many Ext modules and finite homological dimension of Hom. As application of our methods, we are also provide a negative answer to a question of Gheibi-Takahashi, \cite[Ques.~5.4]{GT24}. Our investigation naturally leads us to introduce various generalizations of the notion of residually faithful modules due to Brennan-Vasconcelo. These generalizations play a crucial role in proving many of our results regarding freeness criteria. Finally, we apply our techniques to understand the notion of $n$-semidualizing modules of Tony Se and their behaviour inside the divisor class group.
	
One of the most remarkable long-standing conjectures in homological commutative algebra is due to Auslander and Reiten. It gives a possible criteria for a module to be projective in terms of vanishing of certain Ext modules.
	
\begin{conjecture}\cite{AR75}\label{ARC}
	Let $M$ be an $R$-module. If $\Ext_R^i(M,M\oplus R) = 0 $ for all $i\ge 1$, then $M$ is projective.
\end{conjecture}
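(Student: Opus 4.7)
This is the Auslander--Reiten Conjecture, famously open for about fifty years, so a complete proof cannot be expected; I will instead sketch the partial-progress strategy that the paper's title and abstract naturally suggest. First I would reduce to the case where $R$ is Noetherian local, since projectivity equals freeness in this setting, and both the vanishing hypotheses and the conclusion behave well under localization, so it suffices to handle the local case.

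Next, I would exploit the hypothesis $\Ext_R^{\ge 1}(M,R) = 0$ to control the Hom module: this vanishing forces $\Hom_R(M,R)$ to behave well under duality, and combined with $\Ext_R^{\ge 1}(M,M) = 0$ one expects $\Hom_R(M,N)$ for $N \in \{R,M\}$ to have finite projective, injective, or Gorenstein dimension. The plan is then to apply a rigidity theorem saying that, over a local ring, finiteness of a homological dimension of $\Hom_R(M,N)$ together with a limited amount of Ext-vanishing forces one of $M$ or $N$ to be free. This is essentially the route advertised by the paper's title, and it also indicates why the authors state the conjecture up front: their main theorems will be sufficient criteria of exactly this shape, reducing the number of Ext vanishings required.

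For the application to divisor class groups mentioned in the abstract, one would argue within the class of rank-one reflexive modules on a normal domain, where the divisorial product gives a group structure. Under the Ext-vanishing and $(S_3)$ hypotheses, the class of $1$-semidualizing modules should be closed under this product, giving a subgroup of the divisor class group; when the ring is additionally locally regular in codimension two, one expects every divisor class to be represented by a $1$-semidualizing module, yielding equality.

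The hard part, and the reason the full conjecture remains open, is the general bridge from ``numerically free'' to ``actually free'': vanishing of both Ext groups forces $M$ to look like a free module on every homological and numerical measure (it is totally reflexive, has vanishing Tate cohomology against $R$, and $\End_R(M)$ behaves like a trivial deformation of $R$), but without extra hypotheses (finiteness of a $\Hom$-dimension, Serre conditions, or codimension restrictions on the non-regular locus) no one has been able to close this gap. I expect the paper to succeed under such auxiliary hypotheses rather than in full generality, and the main obstruction in any of these partial results will be extracting a nonzero Ext obstruction from finite homological dimension of $\Hom_R(M,N)$ when the numerical data of $M$ are already indistinguishable from those of a free module.
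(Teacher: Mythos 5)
The statement you were asked about is the Auslander--Reiten Conjecture itself, which the paper states as an open conjecture and does not (and cannot) prove; you correctly recognize this and decline to offer a proof. Your sketch of the paper's partial-progress route --- reducing to the local case and deriving freeness from finiteness of a homological dimension of $\Hom_R(M,N)$ together with limited Ext-vanishing --- accurately matches what the paper actually establishes (e.g.\ its corollary confirming the conjecture when $\pd_R(\Hom_R(M,R))<\infty$, or when $\pd_R(\Hom_R(M,M))<\infty$ and $\gdim_R(M)<\infty$), so there is nothing to correct.
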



The conjecture is known to hold true for various special classes of rings, see, e.g., \cite[1.9]{ADS93}, \cite[Thm.~6.7]{DG23}, \cite[Thm.~4.1]{HSV04}, \cite[Thm.~0.1]{HL04}, \cite[Cor.~4]{Ar09}, \cite[Cor.~1.3]{KOT21},
\cite[Thm.~1.2]{Sh23}, \cite[1.2]{NS17}, 
\cite[Cor.~6.8]{NT20}, \cite{AINW} and \cite[Prop~5.10 and Thm.~6.7]{Ta}. However, the conjecture is broadly open even for Gorenstein rings. Among various classes of modules, the conjecture holds true when: (1)~$M$ has finite complete intersection dimension, cf.~\cite[Thm.~4.3]{AY98} and \cite[Thm.~4.2]{AB00}. (2) $M$ is a module over an Artinian local ring $(R,\fm)$ satisfying $\fm^2M=0$ \cite[Thm.~4.2]{HSV04} (see also \cite[Thm.~F]{DGS}). (3) $\Hom_R(M,R)$ or $\Hom_R(M,M)$ has finite injective dimension \cite[2.10.(2) and 2.15]{GT21}.

In the present study, we considerably strengthen \cite[2.10.(2) and 2.15]{GT21}, the main results in \cite{GT21}. Indeed, taking $N=R$   in \Cref{inj1} and \Cref{injcor}, one recovers and improves \cite[Cor.~2.10.(2)]{GT21}, while taking $N=M$ in \Cref{injimprov}, one recovers and improves \cite[Thm.~2.15]{GT21}. It is to be noted that in \Cref{injimprov}, one only needs the vanishing of $\Ext_R^{1 \le i \le t}(M,R)$, unlike in \cite[Thm.~2.15]{GT21}, where the vanishing of $\Ext_R^{1 \le i \le 2t+1}(M,R)$ is required, where $t = \depth(R)$. Moreover, both Theorems~\ref{inj1} and \ref{injimprov} provide partial positive answers to \cite[Ques.~2.9]{GT21}.



We establish several preliminary results in Section~\ref{sec:van-Ext-some-cons} to prove \Cref{inj1}, \Cref{injimprov} and \Cref{injcor}.  As a byproduct of our methods, we provide a new characterization of $I$-Ulrich modules of Dao-Maitra-Sridhar (see \Cref{ne2}) which we then use to provide a negative answer to a question of Gheibi-Takahashi, \cite[Ques.~5.4]{GT24}, on characteristic modules, see \Cref{para:char-mod}, \Cref{para:GT-ans} and \Cref{thm:negative-answer-GT}.


In \cite[Sec.~5]{BV01}, Brennan-Vasconcelos introduced and studied residually faithful modules, which were later studied by Goto-Kumashiro-Loan \cite[Sec.~3]{GKL19}. In \Cref{resfaith2}, we highly generalize their \cite[Thm.~3.9]{GKL19} even under weaker Ext-vanishing hypotheses. One of our results gives a freeness criteria of modules using this notion and the notion of generalized Ulrich modules as introduced in \cite[Defn.~1.2]{GOTWY}; see Proposition~\ref{resfaithul}. 
A crucial ingredient of the proof of \Cref{resfaithul} is a result comparing existence of nonzero free summand of a module and its dual, as shown in Lemma~\ref{freesumm}.

Taking cue from the definition \cite[3.1]{GKL19}, we introduce a few generalized versions of residually faithful modules, see Definition~\ref{defn:res-faithful}.
Using these notions, in Section~\ref{sec:res-faithful}, we prove the following theorem, which 
helps us to address Conjecture~\ref{ARC} when projective dimension of $\Hom_R(M,M)$ is finite.

\begin{customtheorem}{\ref{thm:pd-Hom-finite-gdim-freeness-criteria}}
    Let $R$ be a local ring of depth $t$. Let $C$ be a semidualizing $R$-module $($e.g., $C=R$$)$. Let $M$ and $N$ be $R$-modules such that $\Hom_R(M,N)$ is nonzero,
    \[
        \Ext_R^{1\le i \le t-1}(M,N) = 0 \;\mbox{ and } \;\pd_R\big(\Hom_R(C,\Hom_R(M,N))\big) < \infty.
    \]
    Let $N \cong F \oplus \syz_R(Y)$ for some $R$-modules $Y$ and $F$ each of depth at least $t$ $($e.g., if $N$ is $(t+1)$-torsion-free, or if $\gdim_R(N)=0$$)$. Then, $N\cong F$, $C\cong R$, and $M\cong t(M)\oplus G$ for some nonzero free $R$-module $G$, where $t(M)$ denotes the torsion submodule of $M$.
\end{customtheorem}

Specializing Theorem~\ref{thm:pd-Hom-finite-gdim-freeness-criteria} to $C=R$ or $C=\omega$, a canonical module of $R$, and combining with other results, we are able to prove some freeness criteria in \Cref{thm:Hom-pd-id} that complement Theorems~\ref{inj1} and \ref{injimprov}.
The results (1) and (2) in Theorem~\ref{thm:Hom-pd-id} are partly motivated by a special case of Conjecture~\ref{ARC} which is known as Tachikawa's conjecture; see \cite[pp.~219]{ABS05}.

\begin{conjecture}[Tachikawa]\label{Tachikawa}
    A Cohen-Macaulay local ring $R$ with a canonical module $\omega$ is Gorenstein (equivalently, $\omega$ is free) if $\Ext_R^i(\omega,R)=0$ for all $i \ge 1$.
\end{conjecture}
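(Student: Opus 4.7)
Since Tachikawa's is a famous open conjecture in homological commutative algebra, the plan is not to settle it outright but to indicate how the freeness criteria established earlier in the paper --- in particular Theorem~\ref{thm:main-pd-id} --- can be brought to bear. Setting $M=\omega$ and $N=R$ in Theorem~\ref{thm:main-pd-id}, the hypotheses $\Ext_R^{1\le i\le t-1}(M,N)=0$ and $\Ext_R^t(M,R)=0$ are immediately satisfied by Tachikawa's assumption, since $R$ is Cohen--Macaulay with $t=\depth R=\dim R$. The module $N=R$ is free, so the decomposition $N\cong F\oplus\syz_R(Y)$ required by the theorem holds trivially with $F=R$ and $Y=0$. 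Thus the conjecture reduces to verifying two ingredients: (i) $\Hom_R(\omega,R)\ne 0$, and (ii) either $\pd_R(\Hom_R(\omega,R))<\infty$ (applying part~(2)) or $\id_R(\Hom_R(\omega,R))<\infty$ (applying part~(4)). One could alternatively attempt Theorem~\ref{thm:main-results-id-Hom} with the same $M$ and $N$ and obtain the same reduction.

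Step~(i) is straightforward in many situations: when $R$ is generically Gorenstein (in particular, when $R$ is reduced or a domain), $\omega$ can be realised as an ideal of $R$, and hence $\Hom_R(\omega,R)\ne 0$. More subtly, the hypothesis $\Ext_R^{i}(\omega,R)=0$ for all $i\ge 1$ gives the derived-category identification $R\Hom_R(\omega,R)\simeq \Hom_R(\omega,R)$, so the naive Hom coincides with the derived Hom --- a strong structural constraint. One then hopes to exploit this together with the Auslander transpose exact sequence
$$0\longrightarrow \Ext_R^1(\Tr\omega,R)\longrightarrow \omega\longrightarrow \omega^{\ast\ast}\longrightarrow \Ext_R^2(\Tr\omega,R)\longrightarrow 0$$
and the local freeness $\omega_\fp\cong R_\fp$ on the Gorenstein locus to corner $\Hom_R(\omega,R)$ into a class to which Theorem~\ref{thm:pd-Hom-finite-gdim-freeness-criteria} applies, say, with $C=\omega$.

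The genuine obstacle, unsurprisingly, is step~(ii): controlling the projective or injective dimension of $\Hom_R(\omega,R)$ from the Ext-vanishing hypothesis is essentially the crux of Tachikawa's conjecture itself. Accordingly, the approach sketched above will at best settle the conjecture under auxiliary assumptions --- e.g.\ when $R$ is generically Gorenstein and locally Gorenstein in small codimension, when $\Hom_R(\omega,R)$ is a priori known to have finite Gorenstein dimension, or when $\Hom_R(\omega,R)$ carries a semidualizing structure that plugs into Theorem~\ref{thm:pd-Hom-finite-gdim-freeness-criteria}. A genuinely new ingredient, beyond the vanishing criteria collected here, appears necessary for the unconditional statement, consistent with the long-standing open status of the conjecture.
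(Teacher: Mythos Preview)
The paper does not prove Tachikawa's conjecture; it is stated explicitly as ``widely open,'' so there is no proof in the paper to compare against. You correctly recognize this and offer a discussion of how the paper's freeness criteria relate to the conjecture rather than a purported proof.

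That said, your reduction differs from the paper's own. You specialize Theorem~\ref{thm:Hom-pd-id} with $M=\omega$ and $N=R$, which leaves the burden on establishing finiteness of $\pd_R(\omega^*)$ or $\id_R(\omega^*)$; as you acknowledge, this is essentially the entire difficulty. The paper instead takes $M=N=\omega$: then $\Hom_R(\omega,\omega)\cong R$, so finite projective dimension of the Hom comes for free, and $\Ext_R^{i\ge 1}(\omega,\omega)=0$ holds automatically. What is missing to invoke Theorem~\ref{thm:Hom-pd-id}(2) or Corollary~\ref{cor:pd-Hom-G-dim-M-finite} is the structural hypothesis on $N=\omega$, namely that $\gdim_R(\omega)=0$ (equivalently, that $\omega$ be $(t+1)$-torsion-free). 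Thus the paper's framing is: Tachikawa would follow from the Auslander--Reiten conjecture for modules $M$ with $\pd_R(\Hom_R(M,M))<\infty$, and Corollary~\ref{cor:ARC-pd-Hom}(2) settles that case only under the extra assumption $\gdim_R(M)<\infty$. Both your route and the paper's isolate an obstruction of comparable depth, but the paper's is arguably cleaner since the finiteness of the relevant homological dimension of Hom is automatic and the remaining gap is a single, well-identified hypothesis on $\omega$.
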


For a canonical module $\omega$ of $R$, one has that $ \Ext_R^i(\omega,\omega) = 0 $ for all $i \ge 1$. Moreover $ \Hom_R(\omega,\omega) \cong R $. So we are interested whether Conjecture~\ref{ARC} holds true when $\Hom_R(M,M)$ is free or more generally when $\pd_R(\Hom_R(M,M))$ is finite. This is quite difficult, as it will confirm Tachikawa's conjecture, which is widely open. 
However, as a consequence of Theorem~\ref{thm:Hom-pd-id}, we obtain the following.

\begin{customcorollary}{\ref{cor:ARC-pd-Hom}}
	The Auslander-Reiten conjecture holds true for a module $M$ over a commutative Noetherian ring $R$ in each of the following cases:
	\begin{enumerate}[\rm (1)]
		\item $ \pd_R\left( \Hom_R(M,R) \right) < \infty $.
		\item $ \pd_R\left( \Hom_R(M,M) \right) < \infty $ and $\gdim_R(M) < \infty$.
	\end{enumerate}
\end{customcorollary}

Motivated by Conjecture~\ref{Tachikawa}, we also prove Theorem~\ref{thm:pd-Hom-Ext-Tr}, which particularly provides a few criteria for a semidualizing (cf.~\ref{para:Semidualizing}) module $M$ to be free in terms of vanishing of certain $\Ext_R^i(\Tr(M),R)$ or $\Ext_R^i(M^*,R)$, where $\Tr(M)$ and $M^*$ denote the Auslander transpose (cf.~\ref{para:Tr-M}) and the dual of $M$ respectively.

In the last section, we address two questions raised by Tony Se in \cite{Se}. Both the questions are on $n$-semidualizing modules (cf.~Definition~\ref{defn:n-semidualizing}).

\begin{question}\cite[2.10]{Se}\label{ques1:Tony-Se}
Over a Gorenstein ring of finite dimension $d$, is every $(d-1)$-semidualizing module exactly semidualizing?
\end{question}

\begin{question}\cite[5.2]{Se}\label{ques2:Tony-Se}
If $R$ is a normal domain, then whether the set $\fS_0^1(R)$ of isomorphism classes of $1$-semidualizing modules is a subgroup of the divisor class group $\cl(R)$ (cf.~\ref{para:class-group}).
\end{question} 

We provide a complete answer to Question~\ref{ques1:Tony-Se} by proving a stronger result that over a local ring of depth $t$, every $(t-1)$-semidualizing module of finite G-dimension is in fact projective, see Theorem~\ref{mainsemi} and Corollary~\ref{qa}. Note that a projective module $M$ satisfying $\Hom_R(M,M)\cong R$ is semidualizing.

Related to Question~\ref{ques2:Tony-Se}, the relations between $\fS_0^1(R)$ and $\cl(R)$ are investigated in \cite{Se}. If $R$ is a normal domain (equivalently, if $R$ is domain and it satisfies $(S_2)$ and $(R_1)$, cf.~\ref{para-Sn} and \ref{para-Rn}), then as a set $\fS_0^1(R)$ is contained in $\cl(R)$, however there are examples where both the sets are identical, and examples (of Gorenstein normal domains) where both the sets are different, see \ref{para:S1-subset-cl-R} for the detailed references. In the present study, we prove the following results towards Question~\ref{ques2:Tony-Se}.
\begin{customtheorem}{\ref{thm:S1-Cl}}
Let $R$ be a normal domain satisfying $(S_3)$.
\begin{enumerate}[\rm (1)]
    \item If $R$ is locally Gorenstein in codimension $2$, then $\fS_0^1(R)$ is a subgroup of $\cl(R)$.
    \item If $R$ satisfies $(R_2)$, then $ \fS^1_0(R) = \cl(R) $.
\end{enumerate}
\end{customtheorem}

Note that the class of rings as in Theorem~\ref{thm:S1-Cl}.(2) contains all determinantal rings with coefficients in a regular local ring, see \ref{para:Deter-rings} and Corollary~\ref{cor:on-deter-rings}. In particular, Theorem~\ref{thm:S1-Cl}.(2) highly generalizes \cite[Thm.~4.13]{Se}, the main result in \cite{Se}. 

The organization of the paper is as follows. Section~\ref{sec:prel} records basic notations, definitions and some preliminary observations. Section~\ref{sec:van-Ext-some-cons} proves several preparatory results and certain applications of them, including a negative answer to \cite[Ques.~5.4]{GT24}. Most of the preparatory results in Section~\ref{sec:van-Ext-some-cons} are heavily used in the subsequent sections. Section~\ref{sec:id-hom} is mainly devoted towards proving \Cref{inj1},  \Cref{injimprov} and \Cref{injcor}. Section~\ref{sec:res-faithful} contains Proposition~\ref{resfaith2} and Theorem~\ref{thm:pd-Hom-finite-gdim-freeness-criteria}, the former being  a generalization of \cite[Thm.~3.9]{GKL19} due to  Goto-Kumashiro-Loan. Section~\ref{sec:pd-hom} consists of Theorems~\ref{thm:pd-Hom-Ext-Tr} and \ref{thm:Hom-pd-id} along with other results. Section~\ref{clgroup} provides various applications to $n$-semidualizing modules and divisor class  group, culminating in Theorem~\ref{mainsemi}, Theorem~\ref{thm:S1-Cl} and  Corollary~\ref{cor:on-deter-rings}, which particularly answer two questions in \cite{Se}.



\section{Preliminaries}\label{sec:prel}
Here we recall a few notations, terminologies, definitions and basic known results that are useful in this article. Throughout, by CM and MCM, we mean Cohen-Macaulay and maximal Cohen-Macaulay respectively. Let $M$ be an $R$-module. Set $M^* := \Hom_R(M,R)$. Let $t(M)$ be the torsion submodule of $M$. The length of $M$ is denoted by $\ell_R(M)$.
For a local ring $(R,\m,k)$, the $n$th Bass number is defined to be $\mu^n_R(M) := \rank_k \left(\Ext^n_R(k,M)\right)$. Moreover, in the local setup, $\mu_R(M)$ (or $\mu(M)$ when the base ring is clear) denotes the minimal number of generators of $M$ as an $R$-module. For an $\fm$-primary ideal $I$ of $R$, the Hilbert-Samuel multiplicity of $M$ with respect to $I$ is denoted by $e_R(I,M)$.

\begin{para}\label{para:Tr-M}
    For an $R$-module $M$, consider a projective presentation $F_1 \stackrel{\eta}{\to} F_0 \to M \to 0$ of $M$. The $($first$)$ syzygy and Auslander transpose of $M$ are defined to be $\syz_R(M) := \Image(\eta) $ and $\Tr(M) := \coker(\eta^*)$ respectively. When the base ring is clear, we drop the subscript from $\syz_R(M)$. Set $\syz^1(M)=\syz(M)$, and inductively $\syz^n(M)=\syz^1(\syz^{n-1}(M))$ for all $n\ge 2$. These modules are uniquely determined by $M$ up to projective summands.
    By the definition of $\Tr(M)$, there is an exact sequence
\begin{equation}\label{eqn:es-M*-Tr-M}
    0 \to M^* \to F_0^* \stackrel{\eta^*}{\longrightarrow} F_1^* \to \Tr(M)\to 0.
\end{equation}
It is well known that $M$ and $\Tr(\Tr M)$ are stably isomorphic (i.e., these two modules are isomorphic up to projective summands). For further details on this topic, see \cite{AB69}. When $R$ is local, $\syz^n(M)$ is only defined by considering minimal free resolution of $M$. We call a module $X$ to be an $n$-th syzygy module if $X$ is of the form $\syz^n(N)$ for some module $N$. Consequently, when $R$ is local, any $n$-th syzygy module is of the form $F\oplus \syz^n(N)$ for some modules $F$ and $N$ with $F$ free.
\end{para}

\begin{para}\label{para:Ext-syz}
    Ler $R$ be local. Let $n\ge 0$ be an integer. If $\Ext_R^{1\le i \le n-1}(M, R)=0$, then $M^*\cong F\oplus \syz_R^{n+1}(L)$ for some $R$-modules $F$ and $L$ with $F$ free. Indeed, consider a minimal free resolution $\mathbb{F}_M : \cdots \to F_n \to F_{n-1}\to\cdots\to F_1\to F_0 \to 0$ of $M$. Since $\Ext_R^{1\le i \le n-1}(M, R)=0$, dualizing $\mathbb{F}_M$, one obtains an exact sequence $0 \to M^* \to F_0 \to F_1 \to\cdots\to F_{n-1} \to F_n \to L \to 0$ for some $R$-module $L$. Hence the assertion follows.
\end{para}


\begin{para}\label{para-Sn}
An $R$-module $M$ is said to satisfy the Serre condition $(S_n)$ (resp., $(\widetilde S_n$) if
\begin{center}
$\depth_{R_{\p}}(M_{\p}) \ge \inf\{n,\dim(R_{\p})\}$ (resp., $\depth_{R_{\p}}(M_{\p}) \ge \inf\{n,\depth(R_{\p})\}$)
\end{center}
for all $\p \in \Spec(R).$ Note that if $R$ satisfies $(S_n)$, then there is no difference between the modules satisfying  $(S_n)$ and those satisfying $(\widetilde S_n)$.
\end{para}

\begin{para}\label{para:property-in-codim}
    An $R$-module $M$ is said to satisfy a property (P) in codimension $n$ (resp., in co-depth $n$) if the $R_{\p}$-module $M_{\p}$ satisfies the property (P) for every prime ideal $\p$ with $\dim(R_{\p})\le n$ (resp., $\depth(R_{\p})\le n$). Similarly, one defines a property of the ring $R$ in codimension $n$ (resp., in co-depth $n$).
\end{para}

\begin{para}\label{para-Rn}
    A ring $R$ is said to satisfy the Serre condition $(R_n)$ if $R$ is locally regular in codimension $n$.
\end{para}

\begin{para}\label{para:-n-torsion-free-nth-syz}
    Let $n\ge 0$ be an integer. Recall from \cite[Defn.~2.15]{AB69} that an $R$-module $M$ is said to be $n$-torsion-free if $\Ext_R^{1\le i \le n}(\Tr M, R)=0$. So, in this terminology, $M$ is called torsionless (resp., reflexive) if it is $1$-torsion-free (resp., $2$-torsion-free), see, e.g., \cite[1.4.21]{BH93}. If $M$ is $n$-torsion-free, then $M $ is an $n$-th syzygy module (\cite[Thm.~2.17]{AB69}), and consequently, satisfies $(\widetilde S_n)$ (cf.~\cite[1.3.7]{BH93}). Thus, if $R$ is local, then every $n$-torsion-free $R$-module $M$ can be written as $M \cong F \oplus \syz^n_R(L) $ for some $R$-modules $L$ and $F$ with $F$ free.
\end{para}

\begin{para}\label{para:G-dim-n-torsion-free}
    An $R$-module $M$ is said to be G-projective or totally-reflexive if both $M$ and $\Tr(M)$ are $n$-torsion-free for every $n\ge 1$. The G-dimension of $M$, denoted $\gdim_R(M)$, is defined to be the infimum of non-negative integers $n$ such that there exists an exact sequence $0 \to G_n \to G_{n-1} \to \cdots \to G_0 \to M \to 0$ of $R$-modules, where each $G_i$ is G-projective.
\end{para}

\begin{para}\label{para:Semidualizing}
An $R$-module $M$ is called semidualizing if $\Ext_R^j(M,M)=0$ for all $j\ge 1$ and $\Hom_R(M,M)\cong R$. A free module of rank $1$ and every canonical module are examples of semidualizing modules.
\end{para}

The notion of $n$-semidualizing modules was introduced by Takahashi in \cite[Defn.~2.3]{Tak07}. Later, it was slightly modified by Tony Se in \cite[Defn.~2.1]{Se}.

\begin{definition}\cite[Defn.~2.1]{Se}\label{defn:n-semidualizing}
    Let $n\ge 0$. An $R$-module $M$ is said to be $n$-semidualizing if $\Hom_R(M,M)\cong R$ and $\Ext^i_R(M,M)=0$ for all $1\le i\le n$. The set of all isomorphism classes of $n$-semidualizing modules of $R$ is denoted by $\fS^n_0(R)$.
\end{definition}

Next, we record some possibly known preliminary observations on how length and multiplicity behave with respect to local flat base change. These will be used in connection with how an `Ulrich like property' of modules behave with respect to considering Hom under vanishing of certain Ext.

\begin{lemma}\label{lem:JS-J-reg-gen}
    Let $(R,\fm) \longrightarrow (S,\fn)$ be a flat local ring homomorphism. Let $J$ be an ideal of $R$, and $I := JS$ be the extension of $J$ in $S$. Let $M$ be an $R$-module. Then
    \begin{enumerate}[\rm (1)]
        \item $J$ is generated by an $R$-regular sequence \iff $I$ is generated by an $S$-regular sequence.
        \item $\ell_S(S\otimes_R M) = \ell_R(M)\ell_S(S/\fm S)$.
        \item Let $\fm S = \fn$. Then
        \begin{enumerate}[\rm (a)]
            \item $\ell_R(M/JM) = \ell_S((S\otimes_R M)/I(S\otimes_R M))$.
            \item $J$ is $\fm$-primary \iff $I$ is $\fn$-primary.
            \item When $J$ is $\fm$-primary, $e_R(J,M)=e_S(I,S\otimes_R M)$.
        \end{enumerate}
        
    \end{enumerate}  
\end{lemma}

\begin{proof}
    (1) For the only if part, let $J= \langle {\bf x} \rangle$, where ${\bf x} = x_1,\ldots,x_r$ is an $R$-regular sequence. Then $JS = {\bf x}S$. Since $R \to S$ is flat, by induction on $r$, it can be verified that ${\bf x}$ is also regular on $S$. It remains to prove the if part Denote $I=JS$. Note that $I\cong J\otimes_R S$. As $R \to S$ is faithfully flat, if $I=0$, then $J=0$. So we may assume that $I\neq 0$. Since $I$ is generated by an $S$-regular sequence, $\pd_S(I)<\infty$. Moreover, $I/I^2$ is a free $S/I$-module, see, e.g., 
    \cite[1.1.8]{BH93}.
    Since $R \to S$ is a local flat homomorphism, tensoring a minimal free resolution of $J$ over $R$ by $S$, one obtains that $\pd_R(J) = \pd_S(I)<\infty$. Let $I/I^2 \cong (S/I)^{\oplus n}$ for some $n\ge 1$. Then it follows that
    \[
        \dfrac{J}{J^2} \otimes_R S \cong \dfrac{J\otimes_R S}{J^2\otimes_R S} \cong \dfrac{JS}{J^2S} \cong \dfrac{I}{I^2} \cong \left( \dfrac{S}{I} \right)^{\oplus n} \cong \left( \dfrac{R}{J} \otimes_R S \right)^{\oplus n} \cong \left( \dfrac{R}{J} \right)^{\oplus n} \otimes_R S.
    \]
    Therefore, by \cite[ Prop.~2.5.8]{Gro} , $J/J^2 \cong (R/J)^{\oplus n}$. Hence, by \cite[2.2.8]{BH93}, $J$ is generated by an $R$-regular sequence.

    (2) This equality is well known, see, e.g., \cite[Tag~02M1]{Stacks} or \cite[1.2.25]{BH93}.

    (3) The proof of (a) is same as that of \cite[4.3]{ddd}. We give the details for the convenience of the reader. Since $\fm S = \fn$, one has that
    \begin{align*}
        \ell_R(M/JM) &= \ell_R(M/JM)\ell_S(S/\fm S) = \ell_S\big(S\otimes_R (M/JM)\big) \quad \mbox{[appying (2) on $M/JM$]}\\
        &= \ell_S\big((S\otimes_R M)/(S\otimes_R (JM))\big) = \ell_S((S\otimes_R M)/I(S\otimes_R M)),
    \end{align*}
    where the last equality follows as $S\otimes_R (JM) = (JS)(S\otimes_R M)$ when identified as submodules of $S\otimes_R M$, see, e.g., \cite[5.2.5(1)]{ddd}. This proves (a). Considering $M=R$ in (a), one obtains (b). For (c), note that $\dim_R(M) = \dim_S(S\otimes_R M)$ (cf.~\cite[Thm.~A.11(b)]{BH93}). Moreover $J^nS = I^n$. Therefore, in view of (a),
    \begin{align*}
        e_R(J,M) &= \lim_{n\to \infty} \dfrac{\dim(M)!}{n^{\dim(M)}}\;\ell_R\Big(\dfrac{M}{J^{n+1}M}\Big) \\
        &= \lim_{n\to \infty} \dfrac{\dim(S\otimes_R M)!}{n^{\dim(S\otimes_R M)}} \; \ell_S\left(\dfrac{S\otimes_R M}{I^{n+1}(S\otimes_R M)}\right) = e_S(I,S\otimes_R M).
    \end{align*}
    This completes the proof of the lemma.
\end{proof}

We recall the notion of Ulrich modules (with respect to an $\fm$-primary ideal).

\begin{definition}\label{defn:Ulrich}
    Let $(R,\fm)$ be a CM local ring, and $J$ be an $\fm$-primary ideal of $R$.
    \begin{enumerate}[\rm (1)]
        \item \cite[Defn.~1.2]{GOTWY} An MCM $R$-module $M$ is said to be Ulrich with respect to $J$ if $e_R(J,M)=\ell_R(M/JM)$ and $M/JM$ is free over $R/J$.
        \item \cite[pp.~183]{BHU} An $R$-module is said to be Ulrich if it is Ulrich with respect to $\fm$.
    \end{enumerate}
\end{definition}

Lemma~\ref{lem:JM=xM} is closely related to the notion of Ulrich modules, and will be subsequently used in Proposition~\ref{mainul} and Lemma~\ref{lem:J=x}. 

\begin{lemma}\label{lem:JM=xM}
    Let $(R,\m)$ be a CM local ring of infinite residue field, $M$ be an MCM $R$-module, and $J$ be an $\m$-primary ideal of $R$. Then, the following statements are equivalent:
    \begin{enumerate}[\rm (1)]
        \item $e_R(J,M)=\ell_R(M/JM)$. 
        \item $JM=(\mathbf x )M$ for some $R$-regular sequence $\mathbf x$ such that $(\mathbf x)$ is a reduction ideal of $J$.
        \item $JM=(\mathbf x )M$ for every $R$-regular sequence $\mathbf x$ such that $(\mathbf x)$ is a reduction ideal of $J$.
    \end{enumerate}
\end{lemma}

\begin{proof}
    Let ${\bf x}$ be an $R$-regular sequence such that $(\mathbf x)$ is a reduction ideal of $J$. Such a sequence always exists as $R$ is CM, $J$ is $\m$-primary, and the residue field of $R$ is infinite, see, e.g., \cite[4.6.10]{BH93}. So the implication (3) $\Rightarrow$ (2) is trivial. For the other implications, note that $e({\bf x}, M)=e_R(J, M)$ (cf.~\cite[4.6.5]{BH93}). As $M$ is MCM, the $R$-regular sequence ${\bf x}$ is also regular on $M$. Hence it follows from \cite[1.1.8]{BH93} that
    \[
        \dfrac{({\bf x})^nM}{({\bf x})^{n+1}M} \cong \left(\dfrac{M}{({\bf x})M}\right)^{\bigoplus {n+d-1\choose d-1}}
    \]
    which implies that $e_R(({\bf x}), M) = \ell_R(M/({\bf x})M)$. Therefore
    \[
        e_R(J, M) = e_R(({\bf x}), M) = \ell_R(M/({\bf x})M).
    \]
    This yields that $e_R(J, M) = \ell_R(M/JM)$ \iff $JM=({\bf x})M$ as $({\bf x})\subseteq J$. Consequently, (1) $\Rightarrow$ (3) and (2) $\Rightarrow$ (1).
\end{proof}

The notion of trace ideal of a module is used in Lemmas~\ref{tracereg} and \ref{freesumm}.

\begin{para}\label{para:trace}
    The trace ideal of an $R$-module $M$, denoted $\tr_R(M)$, is defined to be the ideal $\sum f(M)$, where $f$ varies in $M^*$. Note that $\tr_R(M)$ is same as the image of the natural map $\Phi : M\otimes_R M^* \to R$ defined by $\Phi(x,f) = f(x)$ for all $x\in M$ and $f\in M^*$.
\end{para}

\section{Vanishing of Ext and some consequences}\label{sec:van-Ext-some-cons}

In this section, we prove certain fundamental results, some of which are useful in the latter sections. 

\begin{lemma}\label{extres}
Let $M$ and $N$ be $R$-modules such that $\Ext_R^{1\le i \le n-1}(M,N)=0$ for some positive integer $n$. The following statements hold true:
\begin{enumerate}[\rm(1)] 
    \item Let ${\bf x} := x_1,\dots,x_n$ be an $N$-regular sequence of length $n$. Then
    \begin{enumerate}[\rm (i)]
        \item ${\bf x}$ is also $\Hom_R(M,N)$-regular.
        \item $\Hom_R(M,N)/{\bf x} \Hom_R(M,N)$ embeds in $\Hom_R(M, N/{\bf x} N)$.
        \item In addition, if $\Ext_R^n(M,N)=0$, then
        \[
        \Hom_R(M,N)/{\bf x} \Hom_R(M,N) \cong \Hom_R(M, N/{\bf x} N) \cong \Hom_{R/{\bf x}R}(M/{\bf x} M, N/{\bf x} N).
        \]
    \end{enumerate}
    \item $\depth_{R_{\p}}(\Hom_{R_{\p}}(M_{\p},N_{\p})) \ge \inf\{n, \depth_{R_{\p}}(N_{\p})\}$ for all $\p \in \Spec(R)$. 
\end{enumerate}    
\end{lemma}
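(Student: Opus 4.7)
The natural approach is induction on $n$, leveraging the short exact sequence of multiplication by $x_1$ on $N$. For the base case $n=1$, applying $\Hom_R(M,-)$ to $0 \to N \xrightarrow{x_1} N \to N/x_1 N \to 0$ yields a long exact sequence whose left-exactness immediately gives (i) and an embedding for (ii); the additional assumption $\Ext_R^1(M,N) = 0$ in (iii) forces the next connecting map to be zero, upgrading the embedding to the claimed isomorphism.

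For $n \ge 2$, apply $\Hom_R(M,-)$ to the same sequence. The hypothesis $\Ext_R^{1 \le i \le n-1}(M,N) = 0$ simultaneously produces the short exact sequence
\[
0 \to \Hom_R(M,N) \xrightarrow{x_1} \Hom_R(M,N) \to \Hom_R(M, N/x_1 N) \to 0
\]
and the vanishings $\Ext_R^{1 \le i \le n-2}(M, N/x_1 N) = 0$. Now invoke the induction hypothesis on the pair $(M, N/x_1 N)$ with the regular sequence $x_2, \dots, x_n$ of length $n-1$. Part (i) of the induction, combined with $x_1$ being regular on $\Hom_R(M,N)$, yields (i); composing part (ii) of the induction with the displayed isomorphism $\Hom_R(M,N)/x_1\Hom_R(M,N) \cong \Hom_R(M, N/x_1N)$ yields the embedding of (ii); and for (iii), the extra hypothesis $\Ext_R^n(M,N) = 0$ combines with $\Ext_R^{n-1}(M,N) = 0$ to force $\Ext_R^{n-1}(M, N/x_1 N) = 0$ via the long-exact-sequence segment $\Ext_R^{n-1}(M,N) \to \Ext_R^{n-1}(M, N/x_1 N) \to \Ext_R^n(M,N)$, after which the inductive form of (iii) concludes.

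For (2), localize at $\p$ and let $d := \inf\{n, \depth_{R_\p}(N_\p)\}$. Choose an $N_\p$-regular sequence of length $d$ in $\p R_\p$; since $d \le n$, the localized hypothesis supplies $\Ext_{R_\p}^{1 \le i \le d-1}(M_\p, N_\p) = 0$, so part (1)(i) applies and shows that this sequence is also $\Hom_{R_\p}(M_\p, N_\p)$-regular, giving the desired depth bound. The main obstacle is purely bookkeeping in the inductive step for (iii): one must verify that the two distinct vanishings $\Ext_R^{n-1}(M,N) = 0$ and $\Ext_R^n(M,N) = 0$ together furnish the single vanishing $\Ext_R^{n-1}(M, N/x_1 N) = 0$ needed to propagate the induction. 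The rest is routine long-exact-sequence manipulation.
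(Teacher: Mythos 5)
Your proposal is correct and follows essentially the same route as the paper: induction on $n$ via the long exact sequence obtained by applying $\Hom_R(M,-)$ to $0\to N\xrightarrow{x_1}N\to N/x_1N\to 0$, passing the hypotheses to the pair $(M,N/x_1N)$, and deducing (2) by applying (1)(i) to a maximal regular sequence of length $\inf\{n,\depth_{R_\p}(N_\p)\}$ after localizing. The bookkeeping step you flag for (iii) is exactly the one the paper carries out, and it works as you describe.
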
   

\begin{proof}
(1) We prove this by induction on $n$. First assume that $n=1$. The exact sequence $0\to N \xrightarrow{x_1} N \to N/x_1 N \to 0$ induces a long exact sequence
\begin{align}\label{les-Ext-N-mod-x1}
    0 & \to \Hom_R(M,N)\xrightarrow{x_1}\Hom_R(M,N) \to \Hom_R(M,N/x_1 N) \\
    & \to \Ext_R^1(M,N) \xrightarrow{x_1} \Ext_R^1(M,N) \to \Ext_R^1(M,N/x_1 N) \to \cdots \nonumber
\end{align}
showing that $x_1$ is $\Hom_R(M,N)$-regular, and there is an embedding
\begin{equation}\label{embedding-hom}
    \Hom_R(M,N)/x_1\Hom_R(M,N) \xhookrightarrow{\;\;\;\;\;\;\;\;\;} \Hom_R(M,N/x_1 N).
\end{equation}
In addition, if $\Ext_R^1(M,N)=0$, then the embedding in \eqref{embedding-hom} is an isomorphism. This proves the base case. Next assume that $n\ge 2$. Let $N':=N/x_1N$. Since $\Ext_R^{1\le i \le n-1}(M,N)=0$ and $n-1\ge 1$, in view of \eqref{les-Ext-N-mod-x1}, one obtains that
\begin{equation}\label{Hom-mod-x_1}
    \Hom_R(M,N)/x_1\Hom_R(M,N)\cong \Hom_R(M,N'),
\end{equation}
and $\Ext^{1\le i\le n-2}_R(M,N')=0$. As ${\bf x'} := x_2,\dots,x_n$ is $N'$-regular, by the induction hypothesis, ${\bf x'}$ is regular on $\Hom_R(M,N')$, and
\begin{equation}\label{embedding-hom-2}
    \Hom_R(M,N')/{\bf x'}\Hom_R(M,N') \xhookrightarrow{\;\;\;\;\;\;\;\;\;} \Hom_R(M,N'/{\bf x'}N').
\end{equation}
When $\Ext_R^{1 \le i \le n}(M,N)=0$, from \eqref{les-Ext-N-mod-x1}, it follows that $\Ext^{1 \le i \le n-1}_R(M,N')=0$, and hence by the induction hypothesis, the embedding in \eqref{embedding-hom-2} is an isomorphism. Therefore, in view of the isomorphism \eqref{Hom-mod-x_1}, one derives that ${\bf x}$ is $\Hom_R(M,N)$-regular, $\Hom_R(M,N)/{\bf x} \Hom_R(M,N)$ embeds in $\Hom_R(M, N/{\bf x} N)$, and these two modules are isomorphic under the extra condition that $\Ext_R^n(M,N)=0$. The last isomorphism in 1.(iii) follows from the general observation that for any $R$-modules $V,W$ and any ideal $I$ of $R$, it holds that $\Hom_R(V, W/IW) \cong \Hom_{R/I}(V/IV, W/IW)$.

(2) Let $\p \in \Spec(R)$. Since $\Ext_R^{1\le i \le n-1}(M,N)=0$, it follows that $\Ext_{R_{\p}}^{1\le i \le n-1}(M_{\p}, N_{\p}) =0$. In order to establish the desired inequality, replacing $R_{\p}$, $M_{\p}$ and $N_{\p}$ by $R$, $M$ and $N$ respectively, we may assume that $R$ is local.
Set $m:=\inf \{n, \depth_R(N) \}$. Consider a regular sequence ${\bf y} = y_1,\ldots,y_m$ on $N$.  Then, by (1).(i), ${\bf y}$ is also $\Hom_R(M,N)$-regular, hence the claim follows.
\end{proof}

Under the additional conditions that $J=\fm$ and $M$ being MCM, the following result is shown in \cite[Prop.~4.1]{KT}.

\begin{proposition}\label{mainul}
Let $(R,\m)$ be a local CM ring of dimension $d$, and $J$ be an $\m$-primary ideal. Assume that there exist $R$-modules $M$ and $N$ such that $N$ is MCM, $e_R(J,N)=\ell_R(N/JN)$, and $\Ext_R^{1\le i\le d-1}(M,N)=0$. Then $\Hom_R(M,N)$ is MCM and satisfies
\[
    e_R\big(J,\Hom_R(M,N)\big) = \ell_R\big(\Hom_R(M,N)/J\Hom_R(M,N)\big).
\]
\end{proposition} 

\begin{proof}
By Proposition~\ref{extres}, $\Hom_R(M,N)$ is MCM. In view of Lemma~\ref{lem:JS-J-reg-gen}, if necessary, replacing $R$ by $R[X]_{\fm R[X]}$, we may assume that $R/\fm$ is infinite. Then, since $e_R(J,N)=\ell_R(N/JN)$, by Lemma~\ref{lem:JM=xM}, there exists an $R$-regular sequence $\mathbf x$ such that $(\mathbf x)$ is a reduction ideal of $J$, and $JN=(\mathbf x )N$. Consequently, $\Hom_R(M,N/{\bf x}N)$ is annihilated by $J$. So, by Proposition~\ref{extres}.(1).(ii), $\Hom_R(M,N)/{\bf x} \Hom_R(M,N)$ is annihilated by $J$. Thus, $J\Hom_R(M,N) = {\bf x} \Hom_R(M,N)$. Therefore, again by Lemma~\ref{lem:JM=xM}, one obtains the desired equality.
\end{proof}

The following proposition provides a freeness criteria for a module in terms of certain Ext vanishing which will be very useful in the latter sections.

\if0
\begin{lemma}\label{lem:Ext-vanishing-Hom-reg-seq}
Let $X$ and $Y$ be modules over a local ring $R$. Let ${\bf x} = x_1,\dots,x_n$ be a regular sequence on $Y$ of length $n$, and $\Ext^{1\le i\le n}_R(X,Y)=0$. Then
\begin{enumerate}[\rm (1)]
    \item ${\bf x}$ is $\Hom_R(X,Y)$-regular, and
    \item $\Hom_R(X,Y)/({\bf x})\Hom_R(X,Y)\cong \Hom_R(X,Y/({\bf x})Y)$.
\end{enumerate} 
\end{lemma}

\begin{proof}
We prove the lemma by induction on $n$. In the base case, i.e., when $n=1$, applying $\Hom_R(X,-)$ to the exact sequence $0\to Y \xrightarrow{x_1} Y\to Y/x_1Y\to 0$, one obtains another exact sequence
$$0\to \Hom_R(X,Y) \stackrel{x_1}{\longrightarrow} \Hom_R(X,Y)\to \Hom_R(X,Y/x_1Y)\to \Ext^1_R(X,Y)=0,$$
which yields the desired results in the base case.
Now, for the inductive step, let $n\ge 2$. From the base case, we have that $x_1$ is $\Hom_R(X,Y)$-regular, and 
\begin{equation}\label{Hom-mod-x1}
    \Hom_R(X,Y)/x_1\Hom_R(X,Y)\cong \Hom_R(X,Y/x_1Y).
\end{equation}
Moreover, since $\Ext^{1\le i\le n}_R(X,Y)=0$, it follows that $\Ext_R^{1\le i\le n-1}(X,Y/x_1Y)=0$. Since $x_2,\dots,x_n$ is a regular sequence on $Y/x_1Y$ of length $n-1$, by induction hypothesis, the sequence $x_2,\dots,x_n$ is regular on $\Hom_R(X,Y/x_1Y)$, and
\begin{align}\label{Hom-mod-x2}
    \dfrac{\Hom_R(X,Y/x_1Y)}{(x_2,\dots,x_n)\Hom_R(X,Y/x_1Y)} &\cong \Hom_R\left(X,\dfrac{Y/x_1Y}{(x_2,\dots,x_n)(Y/x_1Y)}\right) \\
    &\cong \Hom_R(X,Y/(x_1,\dots,x_n)Y).\nonumber
\end{align}
Thus, combining \eqref{Hom-mod-x1} and \eqref{Hom-mod-x2}, we get the desired results, which completes the inductive step, and hence the proof.
\end{proof}
\fi 

\begin{proposition}\label{propnew}
Let $M$ and $N$ be nonzero modules over a local ring $R$. Set $s := \depth(N)$. If $\Ext_R^{1\le i\le s}(M,N) = 0 = \Ext_R^{1\le i\le s+1}\big(\Tr(M), \Hom_R(M,N)\big) $, then $M$ is free.
\end{proposition}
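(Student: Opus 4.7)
The plan is to prove the claim by induction on $s := \depth_R(N)$, handling the inductive step via reduction modulo a regular element (using Lemma~\ref{extres}) and treating the base case $s = 0$ directly. Set $H := \Hom_R(M, N)$. As a first observation, Lemma~\ref{extres}(2) already yields $\depth_R(H) \ge s$.

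For the inductive step ($s \ge 1$), I would pick an $N$-regular element $x \in \m$. By Lemma~\ref{extres}(1)(i), $x$ is also $H$-regular, and by Lemma~\ref{extres}(1)(iii) (using $\Ext_R^1(M, N) = 0$), one has $H/xH \cong \Hom_R(M, N')$ where $N' := N/xN$. The long exact sequences of $\Ext$ induced by $0 \to N \xrightarrow{x} N \to N' \to 0$ and $0 \to H \xrightarrow{x} H \to H/xH \to 0$ then show that the hypotheses of the proposition transfer to the pair $(M, N')$ with $s$ replaced by $s - 1 = \depth_R(N')$. The inductive hypothesis yields that $M$ is free.

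The main obstacle is the base case $s = 0$. Here $\depth_R N = 0$, so there is an embedding $k \hookrightarrow N$ which induces $k^{\mu(M)} = \Hom_R(M, k) \hookrightarrow H$; in particular $H \ne 0$. The hypothesis reduces to $\Ext_R^1(\Tr M, H) = 0$, which via the Auslander--Bridger exact sequence
\[
    0 \to \Ext_R^1(\Tr M, H) \to M \otimes_R H \xrightarrow{\rho} \Hom_R(M^*, H) \to \Ext_R^2(\Tr M, H) \to 0
\]
makes $\rho$ injective. To conclude that $M$ must be free, I would argue by contradiction: after reducing to the case that $M$ has no nonzero free summand (so that $f(M) \subseteq \m$ for every $f \in M^*$), the image of $\rho$ is forced to sit inside $\Hom_R(M^*, \m H)$. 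Combining this containment with the adjunction $\Hom_R(\Tr M, H) \cong \Hom_R(\Tr M \otimes_R M, N)$ and the computation $(\Tr M \otimes_R M)/\m \cong k^{\mu_1(M)\mu(M)}$, the goal is to extract a Nakayama-type contradiction. The delicate point, and the main obstacle, lies in this final step: since tensoring is not left exact, the containment $\rho(M \otimes_R H) \subseteq \Hom_R(M^*, \m H)$ cannot be iterated naively, and a more careful argument---very likely exploiting the socle structure of $N$ together with the injectivity of $\rho$---is needed to finish the base case.
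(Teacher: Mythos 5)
Your reduction to the case $\depth(N)=0$ is exactly the paper's strategy: the paper passes modulo a full $N$-regular sequence $\mathbf{x}=x_1,\dots,x_s$ in one step, using Lemma~\ref{extres} to identify $\Hom_R(M,N)/\mathbf{x}\Hom_R(M,N)$ with $\Hom_R(M,N/\mathbf{x}N)$ and the hypothesis $\Ext_R^{1\le i\le s+1}(\Tr M,\Hom_R(M,N))=0$ to conclude $\Ext_R^1\big(\Tr M,\Hom_R(M,N/\mathbf{x}N)\big)=0$; your element-by-element induction is the same computation and is correct. The genuine gap is the base case $s=0$, which you explicitly leave open. The paper does not prove this case from scratch either: it is precisely the content of the cited result \cite[Prop.~3.3.(2)]{KOT21} (depth-zero target together with $\Ext_R^1(\Tr M,\Hom_R(M,N))=0$ forces $M$ free). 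So the one step your write-up is missing is exactly the external input the paper relies on, and as submitted the proof is incomplete.

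That said, your sketch of the base case is close to a correct direct argument; the finishing move is not a Nakayama-type iteration of the containment $\rho(M\otimes_R H)\subseteq\Hom_R(M^*,\m H)$ (which, as you observe, cannot be iterated), but rather testing elements against the evaluation map. Concretely: fix an embedding $k\hookrightarrow N$ (possible since $\depth N=0$) and let $h\in H:=\Hom_R(M,N)$ be a nonzero homomorphism factoring through it, so that $\m h=0$ in $H$. If $M$ has no free summand, then $\phi(m)\in\m$ for all $\phi\in M^*$ and $m\in M$, whence $\rho(m\otimes h)=\bigl(\phi\mapsto\phi(m)h\bigr)=0$ for every $m$; injectivity of $\rho$ gives $m\otimes h=0$ in $M\otimes_R H$, and applying the evaluation map $M\otimes_R H\to N$, $m\otimes f\mapsto f(m)$, yields $h(m)=0$ for all $m$, contradicting $h\neq 0$. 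Hence $M$ has a free summand; splitting it off (the hypotheses pass to the complement, since $\Tr$ and $\Hom_R(-,N)$ respect the decomposition) and repeating shows $M$ is free. Without either this argument or the citation of \cite[Prop.~3.3.(2)]{KOT21}, the proposal does not establish the proposition.
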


\begin{proof}
Let $ {\bf x} := x_1,\ldots,x_s $ be an $N$-regular sequence. Then, by Lemma~\ref{extres}.(1), ${\bf x}$ is also $\Hom_R(M,N)$-regular, and $\Hom_R(M,N)/{\bf x} \Hom_R(M,N) \cong \Hom_R(M,N/{\bf x} N)$. Since ${\bf x}$ is $\Hom_R(M,N)$-regular and $\Ext_R^{1\le i\le s+1}\big(\Tr(M), \Hom_R(M,N)\big)=0$, one derives that
$$\Ext_R^{1}\big(\Tr(M), \Hom_R(M,N)/{\bf x} \Hom_R(M,N)\big) = 0.$$
Hence $\Ext^1_R\big(\Tr(M), \Hom_R(M,N/{\bf x} N)
\big)=0$. Since $\depth(N/{\bf x} N)=0$, the claim now follows from \cite[Prop.~3.3.(2)]{KOT21}.  
\end{proof}

In the following, we repeatedly use that if $\mathbf x :=x_1,\ldots,x_n$ is an $M$-regular sequence, then $\Hom_R(k,M/{\mathbf x} M) \cong \Ext_R^{n}(k,M)$ (cf.~\cite[1.2.4]{BH93}). The first part of Proposition~\ref{prop:GT} gives an alternative proof of \cite[Lem.~2.1]{gt} without using spectral sequences. 

\begin{proposition}\label{prop:GT} Let $(R,\m,k)$ be a local ring. Let $M$ and $N$ be $R$-modules, and $n$ be an integer such that $0\le n \le \depth(N)$ and $\Ext_R^{1\le i \le n}(M,N)=0$. Then, the following hold true
\begin{enumerate}[\rm(1)]
    \item $ \mu_R(M) \mu_R^n(N) = \mu_R^n(\Hom_R(M,N)) $.
    \item If $n=\depth(N)$, and the modules $M$ and $N$ are nonzero, then $\depth(\Hom_R(M,N))=n$.    
\end{enumerate}
\end{proposition}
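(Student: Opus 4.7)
The plan is to reduce both statements to the case $n=0$ by killing an $N$-regular sequence of length $n$, and then to extract the Bass numbers and depths from standard socle calculations.

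First, for part (1), I would pick an $N$-regular sequence $\mathbf x = x_1,\ldots,x_n$ (which exists since $n\le \depth(N)$). Because $\Ext_R^{1\le i \le n}(M,N)=0$, Lemma~\ref{extres}(1) tells me that $\mathbf x$ is also $\Hom_R(M,N)$-regular and, crucially, that
\[
\Hom_R(M,N)/\mathbf x\,\Hom_R(M,N) \;\cong\; \Hom_R(M,\,N/\mathbf x N).
\]
Using the identity $\Hom_R(k, Y/\mathbf x Y)\cong \Ext_R^n(k,Y)$ (cited from \cite[1.2.4]{BH93}) applied to both $Y=N$ and $Y=\Hom_R(M,N)$, the computation of $\mu^n$ therefore reduces to evaluating $\Hom_R(k,\Hom_R(M,N/\mathbf x N))$.

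The next step is a clean adjunction calculation. By tensor-hom adjunction,
\[
\Hom_R\bigl(k,\Hom_R(M,N/\mathbf x N)\bigr) \;\cong\; \Hom_R(k\otimes_R M,\,N/\mathbf x N) \;\cong\; \Hom_R(M/\m M,\,N/\mathbf x N).
\]
Since $M/\m M\cong k^{\mu_R(M)}$, the right-hand side is isomorphic to $\Hom_R(k,N/\mathbf x N)^{\mu_R(M)}$, whose $k$-dimension is $\mu_R(M)\cdot \mu_R^n(N)$. Matching this with $\mu_R^n(\Hom_R(M,N))$ via the isomorphism above yields the desired product formula. (The edge case $n=0$ requires no regular sequence and falls out of the same adjunction directly.)

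For part (2), I would combine part (1) with the standard Bass-number characterization of depth. Since $n=\depth(N)$, we have $\mu_R^n(N)\neq 0$, and $M\neq 0$ forces $\mu_R(M)\neq 0$, so part (1) gives $\mu_R^n(\Hom_R(M,N))\neq 0$; in particular $\Hom_R(M,N)\neq 0$ and $\depth(\Hom_R(M,N)) \le n$. The opposite inequality $\depth(\Hom_R(M,N))\ge n$ is immediate from Lemma~\ref{extres}(2) (or directly from the fact that $\mathbf x$ is $\Hom_R(M,N)$-regular), giving equality. The only subtle step is the adjunction-plus-socle identification in part (1); it is routine but must be written carefully because it is the linchpin that converts the regular-sequence information into the Bass-number product.
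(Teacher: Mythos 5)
Your proof is correct and takes essentially the same route as the paper's: kill an $N$-regular sequence via Lemma~\ref{extres}, compute $\mu_R^n$ as $\rank_k\Hom_R(k,-)$ of the quotient, and convert via tensor-hom adjunction to $\Hom_R(k\otimes_R M, N/\mathbf{x}N)$, with part (2) following from the same nonvanishing-of-Bass-number argument. There are no gaps.
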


\begin{proof}
We may assume that $M$ and $N$ are nonzero. Let ${\mathbf x} =x_1,\ldots,x_n$ be an $N$-regular sequence. Then, by Lemma~\ref{extres}.(1), $\mathbf x$ is also $\Hom_R(M,N)$-regular, and $\Hom_R(M,N)/{\bf x} \Hom_R(M,N) \cong \Hom_R(M, N/{\bf x} N)$. Therefore
  \begin{align*}
      \mu_R^n(\Hom_R(M,N)) &= \rank_k\big( \Ext_R^n(k, \Hom_R(M,N)) \big) \\
      & = \rank_k\big(\Hom_R\big(k, \Hom_R(M,N)/{\bf x} \Hom_R(M,N)\big) \big) \\
      &=\rank_k\big(\Hom_R(k, \Hom_R(M, N/{\bf x} N)\big)\\
      &= \rank_k\big(\Hom_R(k\otimes_R M, N/{\mathbf x} N)\big) \\
      & =\mu_R(M)\rank_k \big(\Hom_R(k, N/{\mathbf x} N)\big)\\
      & =\mu_R(M)\rank_k \big(\Ext_R^n(k, N)\big) =\mu_R(M)\mu^n_R(N).
  \end{align*}
  It remains to prove the second part. Note that $\depth(\Hom_R(M,N))\ge n$. Since $M$ and $N$ are nonzero, and $n=\depth(N)$, it follows that $\mu_R(M)\mu^n_R(M)\neq 0$. Hence, by part one, $\mu_R^n(\Hom_R(M,N))\neq 0$. Therefore $\depth(\Hom_R(M,N))\le n$, proving our claim.
\end{proof}

As a consequence of Proposition~\ref{prop:GT}, we obtain  some interesting freeness criteria via Ext vanishing. The first one is a criterion for a module finite  birational extension of a local ring to be trivial one in terms of existence of certain torsion-free module. 

\begin{corollary}
Let $R$ be a local ring. Consider a module finite ring extension $R\subseteq S$ such that $S \subseteq Q(R)$, where $Q(R)$ denotes the total ring of fractions. If there exists a nonzero torsion-free $S$-module $N$ such that $\Ext_R^{1\le i \le \depth_R(N)}(S,N)=0$, then $S=R$.  
\end{corollary}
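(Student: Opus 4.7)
The plan is to apply Proposition~\ref{prop:GT}.(1) to $M=S$ with $n=\depth_R(N)$, after first establishing the natural identification $\Hom_R(S,N)\cong N$. Granted this identification, since $N$ is a non-zero finitely generated $R$-module (as $S$ is module-finite over $R$), we have $\mu_R^n(N)\neq 0$ at $n=\depth_R(N)$; the formula then forces
\[
\mu_R(S)\cdot \mu_R^n(N) \;=\; \mu_R^n(\Hom_R(S,N)) \;=\; \mu_R^n(N),
\]
so $\mu_R(S)=1$, and the problem reduces to deducing $S=R$ from cyclicity of $S$ as an $R$-module.

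To establish $\Hom_R(S,N)\cong N$, I would consider the natural $R$-linear (in fact $S$-linear) map $\iota\colon N\to \Hom_R(S,N)$ given by $\iota(n)(s)=sn$, which is injective by evaluation at $1$. For surjectivity, take $\varphi\in \Hom_R(S,N)$, set $n:=\varphi(1)$, and given $s\in S\subseteq Q(R)$, write $s=a/b$ with $a,b\in R$ and $b$ an $R$-regular element; then $bs=a\in R$, so $b\cdot \varphi(s)=\varphi(bs)=an=b(sn)$ in $N$. Since $b$ is a unit in $Q(R)\supseteq S$, it is also $S$-regular, and torsion-freeness of $N$ as an $S$-module lets us cancel $b$, giving $\varphi(s)=sn$, i.e., $\varphi=\iota(n)$.

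Finally, $\mu_R(S)=1$ combined with $1\notin \m S$ (which holds since by Nakayama $\m S\neq S$) implies that $\bar 1$ spans the one-dimensional $R/\m$-space $S/\m S$; hence the composition $R\hookrightarrow S\twoheadrightarrow S/\m S$ is surjective and $R+\m S=S$. Equivalently $\m(S/R)=S/R$, so a further application of Nakayama to the finitely generated $R$-module $S/R$ yields $S=R$. The only point of real substance is the identification $\Hom_R(S,N)\cong N$, which makes essential use of both the birationality of $S$ over $R$ (to represent $s\in S$ as a fraction with denominator in $R$) and the $S$-torsion-freeness of $N$ (to cancel that denominator); the remainder is a formal combination of Proposition~\ref{prop:GT}.(1) with Nakayama's lemma.
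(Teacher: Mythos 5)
Your proof is correct and follows the same core strategy as the paper's: identify $\Hom_R(S,N)$ with $N$, apply Proposition~\ref{prop:GT}.(1) at $n=\depth_R(N)$ (where $\mu_R^n(N)\neq 0$) to conclude $\mu_R(S)=1$, and then deduce $S=R$. Two details differ. First, for the identification $\Hom_R(S,N)\cong N$ the paper simply cites \cite[Lem.~5.2.9]{ddd} via $\Hom_R(S,N)=\Hom_S(S,N)\cong N$, whereas you prove it directly; your argument (write $s=a/b$ with $b$ an $R$-regular element, deduce $b\bigl(\varphi(s)-s\varphi(1)\bigr)=0$, and cancel $b$ because it is a unit of $Q(R)$, hence a non-zero-divisor on the torsion-free $S$-module $N$) is correct and self-contained. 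Second, the endgame is genuinely different: the paper passes from $\mu_R(S)=1$ to $S\cong R/I$, uses faithfulness of $S$ over $R$ to get $I=0$, writes $S=Rx$, and finishes with the conductor computation $S=(Rx:_{Q(R)}Rx)=(R:_{Q(R)}R)=R$; you instead note that $\bar 1\neq 0$ in the one-dimensional $R/\fm$-vector space $S/\fm S$, so $1$ generates $S$ by Nakayama and $S=R\cdot 1=R$ outright. Your finish is shorter and cleaner, since it avoids the detour through an abstract $R$-module isomorphism $S\cong R$ (which need not carry $1$ to $1$) and the subsequent conductor identity; both arguments are valid.
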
  

\begin{proof}
Let $N$ be a nonzero torsion-free $S$-module such that $\Ext_R^{1\le i \le n}(S,N)=0$, where $n:=\depth(N)$. In view of \cite[Lem.~5.2.9]{ddd}, $\Hom_R(S,N)=\Hom_S(S,N)\cong N$. Since $\mu_R^n(N)\neq 0$, by Proposition~\ref{prop:GT}.(1), one obtains that $\mu_R(S)=1$. So $S\cong R/I$ for some ideal $I$ of $R$. Since $S$ is faithful as an $R$-module, it follows that $I=0$, i.e., $S\cong R$ as $R$-modules. Thus $S=Rx$ for some $x\in Q(R)$. In particular, $1=rx$ for some $r\in R$. Hence $x$ is a non-zero-divisor in $Q(R)$. Therefore $S=(S:_{Q(R)} S)=(Rx :_{Q(R)} Rx)=(R:_{Q(R)} R)=R$.
\end{proof}

As another consequence of Proposition~\ref{prop:GT}, we give a freeness criteria for an $\m$-primary ideal in a CM local ring of dimension $1$. For this, we first record two preparatory lemmas.

\begin{lemma}\label{n1}
Let $M$ and $N$ be $R$-modules, and $I$ be an ideal of $R$. Suppose $N$ is torsion-free. If $M$ is either locally free on $\Ass(R)$, or $I$ contains a non-zero-divisor, then 
    $\Hom_R(I, \Hom_R(M,N))\cong \Hom_R(IM,N)$.  
\end{lemma}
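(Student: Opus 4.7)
The plan is to apply tensor--Hom adjunction and reduce the claim to showing that $\Hom_R(\Tor_1^R(R/I,M),N)=0$. Adjunction gives
\[
\Hom_R(I,\Hom_R(M,N))\cong \Hom_R(I\otimes_R M,N),
\]
while tensoring the short exact sequence $0\to I\to R\to R/I\to 0$ with $M$ yields the short exact sequence
\[
0\to \Tor_1^R(R/I,M)\to I\otimes_R M\to IM\to 0,
\]
where the right map is multiplication. Applying $\Hom_R(-,N)$ to this and setting $T:=\Tor_1^R(R/I,M)$, the desired isomorphism $\Hom_R(IM,N)\cong \Hom_R(I\otimes_R M,N)$ reduces to the vanishing $\Hom_R(T,N)=0$.

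If $I$ contains a non-zero-divisor $x$, then $T$ is naturally an $R/I$-module, so $xT=0$. Any $R$-homomorphism $T\to N$ then takes values in the $x$-torsion of $N$, which is zero because $N$ is torsion-free and $x$ is a non-zero-divisor. Hence $\Hom_R(T,N)=0$.

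If instead $M$ is locally free on $\Ass(R)$, then by flatness of localization, $T_{\mathfrak q}\cong \Tor_1^{R_{\mathfrak q}}(R_{\mathfrak q}/I_{\mathfrak q},M_{\mathfrak q})=0$ for every $\mathfrak q\in \Ass(R)$. To upgrade this pointwise vanishing to $\Hom_R(T,N)=0$, I would invoke the standard identity $\Ass_R(\Hom_R(T,N))=\Supp(T)\cap \Ass(N)$, valid for finitely generated $T$. Given $\p\in \Ass(N)$, torsion-freeness of $N$ forces every element of $\p$ to be a zero-divisor of $R$, so $\p\subseteq \bigcup_{\mathfrak q\in \Ass(R)}\mathfrak q$; prime avoidance applied to the finite set $\Ass(R)$ then yields $\p\subseteq \mathfrak q$ for some $\mathfrak q\in \Ass(R)$. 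Since $T_{\p}$ is a further localization of $T_{\mathfrak q}=0$, we get $T_{\p}=0$, hence $\p\notin \Supp(T)$. Therefore $\Supp(T)\cap \Ass(N)=\emptyset$, and so $\Hom_R(T,N)=0$.

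The chief technical point will be Case~2, where the associated-primes formula for Hom, the transitivity of localization, and prime avoidance must be combined carefully; Case~1 is essentially formal.
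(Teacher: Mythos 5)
Your proposal is correct and follows essentially the same route as the paper: tensor--Hom adjunction plus the exact sequence $0\to \Tor_1^R(R/I,M)\to I\otimes_R M\to IM\to 0$, reducing everything to $\Hom_R(\Tor_1^R(R/I,M),N)=0$, which holds in both cases because $\Tor_1^R(R/I,M)$ is a torsion module and $N$ is torsion-free. Your more detailed verification of Case~2 via $\Ass_R(\Hom_R(T,N))=\Supp(T)\cap\Ass(N)$ and prime avoidance is a valid (if slightly roundabout) way of spelling out what the paper leaves implicit.
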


\begin{proof}
The short exact sequence $0\to I\to R\to R/I\to 0$ induces another exact sequence $0\to \Tor^R_1(R/I, M)\to I\otimes_R M\to IM\to 0$. If $M$ is locally free on $\Ass(R)$, then  $\Tor^R_1(R/I, M)$ is a torsion module. In the other case, when $I$ contains a non-zero-divisor, the module $\Tor^R_1(R/I, M)$ (being annihilated by $I$) is torsion. Therefore, since $N$ is torsion-free, in any case, one obtains that $\Hom_R(\Tor^R_1(R/I, M), N)=0$. Hence, applying $\Hom_R(-,N)$ to the above short exact sequence, we get $\Hom_R(IM,N)\cong \Hom_R(I\otimes_R M,N)\cong \Hom_R(I, \Hom_R(M,N))$, where the last isomorphism follows from tensor-hom adjunction. 
\end{proof}

We recall from \cite[Def.~4.1 and Thm.~4.6]{dms} the definition of $I$-Ulrich modules.

\begin{definition}
    Let $I$ be an $\m$-primary ideal of a CM local ring $(R,\m)$ of dimension $1$. Let $M$ be an MCM $R$-module. Then, $M$ is called $I$-Ulrich if $IM\cong M$.
\end{definition}

Our next \Cref{ne2} gives a new characterization of $I$-Ulrich modules.  

\begin{lemma}\label{ne2}
Let $(R,\m)$ be a CM local ring of dimension $1$. Let $M$ be an MCM $R$-module, and $I$ be an $\m$-primary ideal of $R$. Then $IM\cong M$ \iff $M\cong \Hom_R(I,M)$.
\end{lemma}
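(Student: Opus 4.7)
My plan is to use Lemma~\ref{n1} as the main bridge between the two conditions, and to translate everything into the ambient space $V := M\otimes_R Q(R)$. Because $M$ is MCM over a one-dimensional CM local ring, $M$ is torsion-free and embeds in $V$ as a full-rank submodule; and since $I$ is $\m$-primary, it contains a non-zero-divisor. Lemma~\ref{n1} applied with $N = M$ therefore yields the key identification
\[
\Hom_R(I,\End_R(M)) \;\cong\; \Hom_R(IM,M). \tag{$\star$}
\]
Writing $S := \End_R(M)$, this is an $R$-algebra acting faithfully on $M$ with $SM = M$, and inside $V$ one has the two convenient reformulations
\[
IM \;=\; (IS)\,M, \qquad \Hom_R(I,M) \;=\; (M :_V I) \;=\; (M :_V IS).
\]

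For the direction $(1)\Rightarrow(2)$: given an isomorphism $\varphi\colon IM\xrightarrow{\sim} M$, post-composition with $\varphi$ identifies $\Hom_R(IM,M)\cong S$, and combining with $(\star)$ gives $\Hom_R(I,S)\cong S$. I would then argue that this abstract isomorphism, together with the $S$-module structure on $M$, forces $IS$ to be a principal $S$-ideal generated by a non-zero-divisor $s\in S$; once this is in hand, the reformulation above yields $\Hom_R(I,M) = (M:_V sS) = s^{-1}M \cong M$, which is $(2)$. The direction $(2)\Rightarrow(1)$ is symmetric: $\Hom_R(I,M)\cong M$ translates inside $V$ to the same principality of $IS$, whence $IM = (IS)M = sM \cong M$.

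The main obstacle is the cancellation step: passing from the module-level identity (either $\Hom_R(I,S)\cong S$ or $(M:_V IS) \cong M$) to the ideal-level identity $IS = sS$ inside $S$. This is plausible because $S$ is a module-finite, 1-dim CM $R$-algebra and $M$ is a faithful $S$-module of full rank, so one expects a Nakayama/valuation argument inside $V$ to work; in the rank-$1$ case, $M$ is a fractional ideal in $K = Q(R)$ and the cancellation reduces to comparing exponents under the valuation of the integral closure, which can be checked directly on examples such as $R = k[[t^2,t^3]]$ with $M = \overline{R}$, or numerical semigroup rings. The general (higher-rank) case requires more care, using that $S$ is semilocal Noetherian and $M$ contains a full-rank free $S$-submodule after suitable localization, but the overall shape of the argument is the same.
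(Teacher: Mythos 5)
There is a genuine gap here, and it sits exactly where you flagged it: the ``cancellation step'' is not a technical detail to be filled in later but the entire mathematical content of the lemma. Your reductions up to that point are fine --- Lemma~\ref{n1} with $N=M$ does give $\Hom_R(I,\End_R(M))\cong\Hom_R(IM,M)$, and for torsion-free $M$ the identifications $\Hom_R(I,M)=(M:_V I)=(M:_V IS)$ and $IM=(IS)M$ inside $V=M\otimes_RQ(R)$ are correct. But the implication ``$\Hom_R(I,S)\cong S$ (resp.\ $(M:_VIS)\cong M$) forces $IS$ to be a principal $S$-ideal generated by a non-zero-divisor'' is asserted, not proved. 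Even in the rank-one commutative case, where $S=(M:_KM)$ is a birational extension of $R$, an $S$-module isomorphism $(S:_KIS)\cong S$ only gives $(S:_KIS)=\alpha S$ for some $\alpha\in Q(R)$, and one must then argue that the ideal $J=\alpha IS\subseteq S$ with $(S:_KJ)=S$ equals $S$; this needs the classical fact that $(S:_K\fm')\supsetneq S$ for each maximal ideal $\fm'$ of the one-dimensional semilocal ring $S$, none of which appears in your sketch. For $\operatorname{rank}M>1$ the situation is worse: $S=\End_R(M)$ is non-commutative, ``principal ideal generated by a non-zero-divisor $s$'' and the formula $(M:_VsS)=s^{-1}M$ require $s$ to be a unit of $S\otimes_RQ(R)$ interacting correctly with the two-sided ideal $IS$, and it is not clear that the desired implication even holds at this level of generality. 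Saying the general case ``requires more care, but the overall shape of the argument is the same'' does not close this.

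The paper takes a genuinely different route that avoids the cancellation problem altogether: it first reduces to the complete case (so that a canonical module $\omega$ exists), sets $(-)^{\dagger}=\Hom_R(-,\omega)$, and invokes the Dao--Maitra--Sridhar results \cite[Thm.~4.6 and Lem.~4.15]{dms} to the effect that $IM\cong M$ if and only if $IM^{\dagger}\cong M^{\dagger}$. Combined with $M\cong M^{\dagger\dagger}$ and Lemma~\ref{n1} (applied with $N=\omega$, which is torsion-free), this converts condition (1) into condition (2) and back by a chain of natural isomorphisms, with all the hard work outsourced to the cited duality theorem for $I$-Ulrich modules. If you want to pursue your endomorphism-ring approach, you would essentially have to reprove a form of that theorem; as written, your argument is a plausible strategy for the rank-one case and an unproven hope beyond it.
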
 

\begin{proof}
Considering the $\m$-adic completions, note that $\widehat I$ is an $\widehat \m$-primary ideal of $\widehat R$. Moreover, in view of \cite[Cor.~1.15]{LW}, the desired isomorphisms are equivalent to $\widehat I \ \widehat M \cong \widehat M$ and $\widehat M \cong \Hom_{\widehat R}(\widehat I, \widehat M)$ respectively. So, without loss of generality, we may assume that $R$ is complete, and hence it admits a canonical module, say $\omega$. Set $(-)^{\dagger} := \Hom_R(-, \omega)$.

($\Rightarrow$): In view of \cite[Thm.~4.6 and Lem.~4.15]{dms}, one obtains that $I M^{\dagger} \cong M^{\dagger}$. It is well known that $M\cong M^{\dagger \dagger}$. Hence, by Lemma~\ref{n1},
$$M\cong M^{\dagger \dagger}\cong \Hom_{R}\big(I M^{\dagger}, \omega\big)\cong \Hom_{R}(I, M^{\dagger \dagger}) \cong \Hom_{R}(I, M).$$

($\Leftarrow$): Since $M^{\dagger\dagger} \cong M $, one has that $M \cong \Hom_{R}( I,  M)\cong \Hom_{ R}( I, \Hom_{ R}( M^{\dagger}, \omega))\cong \Hom_{ R}( I  M^{\dagger}, \omega)$, where the last isomorphism follows from Lemma~\ref{n1}. Hence, taking canonical dual, $ M^{\dagger}\cong  I  M^{\dagger}$. Therefore, by \cite[Thm.~4.6 and Lem.~4.15]{dms}, $ I  M \cong  M$.
\end{proof} 

Now we prove the following.

\begin{proposition}  
Let $(R,\m)$ be a CM local ring of dimension $1$. Let $M$ be an MCM $R$-module, and $I$ be an $\m$-primary ideal of $R$. Suppose there exist positive integers $a$ and $b$ such that $I^aM\cong M$ and $\Ext^1_R(I^b, M)=0$. Then, $I\cong R$.  
\end{proposition}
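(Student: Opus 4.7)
The plan is to show that $\mu_R(I^b)=1$, forcing $I^b$ to be principal, and then deduce $I\cong R$ by a standard invertibility argument. First, by Lemma~\ref{ne2}, the hypothesis $I^aM\cong M$ is equivalent to $M\cong \Hom_R(I^a,M)$, and composing an isomorphism $M\xrightarrow{\sim}I^aM$ with the natural inclusion $I^aM\subseteq M$ produces an injective $R$-endomorphism $\phi\colon M\to M$ with image $\phi(M)=I^aM$. Iterating, $\phi^n(M)=I^{na}M$, so $I^{na}M\cong M$ for every $n\ge 1$; by Lemma~\ref{ne2} again, $\Hom_R(I^{na},M)\cong M$ for all such $n$.

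Choosing $n$ large enough that $na\ge b$, the inclusion $I^{na}\subseteq I^b$ yields by restriction an $R$-linear injection $\Hom_R(I^b,M)\hookrightarrow \Hom_R(I^{na},M)\cong M$; injectivity follows from $\Hom_R(I^b/I^{na},M)=0$, which holds because $I^b/I^{na}$ is torsion of finite length and $M$ is torsion-free. Combined with the natural evaluation $M\hookrightarrow \Hom_R(I^b,M)$, we obtain a sandwich $M\hookrightarrow \Hom_R(I^b,M)\hookrightarrow M$ with both cokernels of finite length. Meanwhile, from $\Ext^1_R(I^b,M)=0$ and $\depth_R M=1$, Proposition~\ref{prop:GT}.(2) gives that $\Hom_R(I^b,M)$ is MCM, while Proposition~\ref{prop:GT}.(1) gives the Bass-number identity $\mu_R(I^b)\cdot \mu_R^1(M)=\mu_R^1(\Hom_R(I^b,M))$.

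The crucial step, and the main hurdle, is to upgrade the sandwich to an isomorphism $\Hom_R(I^b,M)\cong M$. I expect this to follow from a careful comparison of the Bass-number long exact sequences attached to the two short exact sequences making up the sandwich, using that $\Hom_R(I^b,M)$ is an MCM module squeezed between copies of $M$ of the same rank, so the finite-length cokernels must be compatible in a way that forces the desired isomorphism.

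Granting this, the Bass-number identity forces $\mu_R(I^b)=1$, and since $I^b$ is $\m$-primary it is generated by a single non-zero-divisor $x$. Then $I\cdot (I^{b-1}x^{-1})=R$ as fractional ideals in $Q(R)$, exhibiting $I$ as an invertible ideal. In a local ring every invertible ideal is free of rank one, so $I\cong R$.
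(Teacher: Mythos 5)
Your endgame is fine: once $\mu_R(I^b)=1$ is known, the fractional-ideal computation $I\cdot(x^{-1}I^{b-1})=R$ shows $I$ is invertible, hence principal over the local ring $R$, generated by a non-zero-divisor; this is correct and in fact slicker than the paper's splitting/direct-summand argument. The problem is the step you yourself flag as the ``main hurdle'': you never establish $\Hom_R(I^b,M)\cong M$, and the principle you propose to invoke is false. An MCM module $X$ sitting in a sandwich $M\hookrightarrow X\hookrightarrow M$ with finite-length cokernels need not be isomorphic to $M$: over $R=k[[t^2,t^3]]$ one has $R\cong t^2R\hookrightarrow\m\hookrightarrow R$ with both cokernels of finite length and $\m$ maximal Cohen--Macaulay, yet $\m\not\cong R$ since $\mu_R(\m)=2$. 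Nor do the Bass-number long exact sequences attached to the two short exact sequences pin down $\mu_R^1(X)$ against $\mu_R^1(M)$, because the connecting maps out of the finite-length cokernels are not controlled. So there is a genuine gap exactly where you suspected.

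The reason you land in this position is that your iteration $\phi^n(M)=I^{na}M$ only yields $I^{c}M\cong M$ for exponents $c$ that are multiples of $a$, forcing you to compare $\Hom_R(I^b,M)$ with $\Hom_R(I^{na},M)$ through the inclusion $I^{na}\subseteq I^b$. The paper sidesteps this entirely: by \cite[Thm.~4.6]{dms} (the characterization of $I$-Ulrich modules, whose nontrivial content is precisely that $I^aM\cong M$ for some $a\ge 1$ already forces $IM\cong M$, hence $I^bM\cong M$ for \emph{every} $b\ge 1$), one gets $I^bM\cong M$ directly, and then Lemma~\ref{ne2} applied to the $\m$-primary ideal $I^b$ gives $\Hom_R(I^b,M)\cong M$ on the nose. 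After that, Proposition~\ref{prop:GT} yields $\mu_R(I^b)=1$ exactly as you intended. To repair your argument, replace the sandwich by this descent from exponent $na$ to exponent $b$ (or simply quote the $I$-Ulrich theorem); the remainder of your outline then goes through.
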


\begin{proof}
By repeatedly applying \cite[Thm~4.6]{dms}, one obtains that $I^bM\cong M$. Hence, by Lemma~\ref{ne2}, $\Hom_R(I^b,M)\cong M$. Therefore, since $\Ext^1_R(I^b, M)=0$, in view of Proposition~\ref{prop:GT}, $ \mu_R(I^b)\mu_R^1(M) = \mu_R^1(\Hom_R(I^b,M)) = \mu_R^1(M) $, which implies that $\mu_R(I^b)=1$ since $ \mu_R^1(M) \neq 0 $. Hence $I^b\cong R$ (since $I^b$ contains a non-zero-divisor). This gives a surjection $I^{\oplus n}\to R\to 0$ (for some $n$), which splits, hence $R$ is a direct summand of $I^{\oplus n}$. As $R$ is local, by \cite[Lem.~1.2.(i)]{LW}, $R$ is a summand of $I$. So there exists an $R$-module $N$ and an $R$-linear isomorphism $f:R \oplus N\to I$. We claim that $N=0$. Let $x\in N$. Set $r:=f(0,x)$ and $s:=f(1,0)$. Then $r,s\in R$, and $f(0,sx)=sf(0,x)=sr=rs=rf(1,0)=f(r,0)$. Since $f$ is an isomorphism, it follows that $(0,sx)=(r,0)$, and hence $r=0$, i.e., $f(0,x)=0$, which implies that $x=0$. Thus $N=0$. So $I\cong R$.
\end{proof} 

We also use \Cref{ne2} to give a negative answer to a question of Gheibi-Takahashi on characteristic module as introduced in \cite{GT24}.

\begin{para}\label{para:char-mod}
    Let $(R,\m)$ be a local ring, and $M$ be an $R$-module. A characteristic module of $M$ is defined in \cite[Defn.~3.4]{GT24} to be an $R$-module $\T_M$ such that $\widehat{\T_M} \cong \Tor_{\dim(Q)-\depth(R)}^Q(\widehat{R},\widehat{M})$, where $Q \twoheadrightarrow\widehat{R}$ is a Cohen presentation of $R$, and $\widehat{(-)}$ denotes the $\m$-adic completion. Suppose that $R$ is complete. Then $\T_M$ exists, and $\T_M\cong \mathbb{T}(M)$ in the sense of \cite[Defn.~4.1]{GT24}. Thus, in view of \cite[Prop.~4.2]{GT24}, $\mathbb{T}(M)\cong \Hom_R(E,M)$, where $E$ is a quasi-canonical module of $R$ (according to \cite[Defn.~3.1 and Prop.~3.2.(1)]{GT24}). Suppose that $R$ is CM. Then $R$ admits a canonical module $\omega$, and $E\cong \omega$ (by \cite[Prop.~3.2.(3)]{GT24}). It follows that $\T_M\cong \mathbb{T}(M) \cong \Hom_R(\omega,M)$.
\end{para}

\begin{question}(\cite[Ques.~5.4]{GT24})\label{ques:GT}
Let $R$ be a local ring. Suppose there is an $R$-module $M$ such that $\dim(M)=\dim(R)$, its characteristic module $\T_M$ exists, and $\T_M\cong M$. Then is $R$ Gorenstein?
\end{question}

\begin{para}\label{para:blow-up}
    For an $\m$-primary ideal $I$ of a CM local ring $(R,\m)$, the blow-up $B(I)$ of $I$ is defined as $\bigcup_{n\ge 0}(I^n:_{Q(R)} I^n)$, where $Q(R)$ denotes the total ring of fractions of $R$. It is clear that $R\subseteq B(I)\subseteq Q(R)$ and well known that  $B(I)$ is a module finite ring extension of $R$. Consequently, $B(I)$ is an MCM $R$-module, and also it is CM as a ring. For more details, we refer the reader to \cite[Sec.~4]{dms} and \cite[Sec.~2]{BP95}.  
\end{para}  

\begin{para}\label{para:GT-ans}
    The answer to \Cref{ques:GT} is negative as shown in \Cref{thm:negative-answer-GT}. Note that any numerical semigroup ring $R =k[[S]]$ satisfies all the hypotheses of the following theorem, and there exists non-Gorenstein numerical semigroup rings, for examples, $R = k[[t^n,t^{n+1},\dots,t^{2n-1}]]$, where $n\ge 3$.
\end{para}

\begin{theorem}\label{thm:negative-answer-GT}
Let $(R,\m)$ be a CM complete local ring of dimension $1$. Suppose that $R$ is generically Gorenstein. Then, $R$ has a canonical ideal $\omega$.
\begin{enumerate}[\rm (1)]
    \item For every MCM $R$-module $M$, $\T_M$ exists, and moreover $\T_M\cong M$ if and only if $M\cong \omega M$.
    \item In particular, the blow-up $B(\omega)$, which is an MCM $R$-module, satisfies $\T_{B(\omega)}\cong B(\omega)$, hence satisfies all the hypotheses of {\rm \Cref{ques:GT}}.
\end{enumerate}
\end{theorem} 

\begin{proof}
Since $R$ is CM and complete local, it admits a canonical module $\omega$. This can be identified with an $\m$-primary ideal as $R$ is generically Gorenstein of dimension $1$, see, e.g., \cite[3.3.18]{BH93}.

(1) Let $M$ be an MCM $R$-module. In view of \ref{para:char-mod}, one has that $\T_M\cong \Hom_R(\omega, M)$. Hence it follows from \Cref{ne2} that $\T_M\cong M$ if and only if $M\cong \omega M$.

(2) In view of \Cref{para:blow-up}, by \cite[Thm.~4.6.$(1)\Leftrightarrow(6)$]{dms}, $B(\omega)\cong \omega B(\omega)$. Therefore, by (1), $B(\omega) \cong \T_{B(\omega)}$. 
\end{proof}

We conclude this section by recording some consequences of finite projective dimension of $M^*$ under some Ext vanishing hypotheses. The first lemma towards this shows that every module whose Auslander transpose has projective dimension at most $1$ must be stably isomorphic to its torsion submodule.





\begin{lemma}\label{trt}
    Let $M$ be an $R$-module such that $\pd_R(\Tr M)\le 1$. Then, $M\cong t(M)\oplus G$ for some projective $R$-module $G$. 
\end{lemma}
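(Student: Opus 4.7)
The plan is to apply the Auslander--Bridger fundamental exact sequence
\[
    0 \to \Ext^1_R(\Tr M, R) \to M \xrightarrow{\sigma_M} M^{**} \to \Ext^2_R(\Tr M, R) \to 0,
\]
where $\sigma_M$ is the biduality map. Since $\pd_R(\Tr M)\le 1$, we have $\Ext^{\ge 2}_R(\Tr M, R) = 0$, so the sequence reduces to a short exact sequence
\[
    0 \to \Ext^1_R(\Tr M, R) \to M \to M^{**} \to 0.
\]
The strategy is then to identify $M^{**}$ as a projective direct summand and $\Ext^1_R(\Tr M, R)$ as the torsion submodule $t(M)$.

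The first step is to show $M^{**}$ is projective. Setting $K:=\Image(\eta^*)$, I would split the 4-term sequence~\eqref{eqn:es-M*-Tr-M} into
\[
    0 \to M^* \to F_0^* \to K \to 0 \quad \text{and} \quad 0 \to K \to F_1^* \to \Tr M \to 0.
\]
Since $F_1^*$ is free and $\pd_R(\Tr M) \le 1$, the module $K$ is a first syzygy of $\Tr M$ and hence projective. Then the first of the two sequences splits, so $M^*$ is projective, and therefore so is $M^{**}$. Consequently, the short exact sequence $0 \to \Ext^1_R(\Tr M, R) \to M \to M^{**} \to 0$ splits, yielding $M \cong \Ext^1_R(\Tr M, R) \oplus M^{**}$.

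The final step is to identify $\Ext^1_R(\Tr M, R)$ with $t(M)$ as submodules of $M$. For the containment $t(M) \subseteq \ker\sigma_M = \Ext^1_R(\Tr M, R)$: if $m \in t(M)$ with $rm=0$ for some non-zerodivisor $r$, then $r\varphi(m)=0$ for every $\varphi \in M^*$, so $\varphi(m)=0$ (as $t(R)=0$), whence $\sigma_M(m)=0$. For the reverse inclusion, I would localize at each $\fp \in \Ass(R)$: since $\depth R_\fp = 0$ and $(\Tr M)_\fp$ has finite projective dimension over $R_\fp$, the Auslander--Buchsbaum formula forces $(\Tr M)_\fp$ to be $R_\fp$-free, so $\Ext^1_R(\Tr M, R)_\fp = 0$. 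Thus $\ann_R(\Ext^1_R(\Tr M, R)) \not\subseteq \fp$ for each $\fp \in \Ass R$, and prime avoidance applied to the finite set $\Ass R$ then shows that this annihilator contains a non-zerodivisor of $R$, so every element of $\Ext^1_R(\Tr M, R)$ lies in $t(M)$. Together with the first step, this gives $M \cong t(M) \oplus G$ with $G := M^{**}$ projective.

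The main subtlety is the last inclusion, where the use of the Auslander--Buchsbaum formula at associated primes of $R$ crucially exploits the fact that over a depth-zero local ring, any finitely generated module of finite projective dimension is free.
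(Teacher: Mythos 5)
Your argument is correct, but it is organized rather differently from the paper's, even though both rest on the same two pillars: that $\pd_R(\Tr M)\le 1$ forces $M^*$ (hence $M^{**}$) to be projective, and that $\Tr M$ is locally free on $\Ass(R)$ by Auslander--Buchsbaum, so that $\Ext^1_R(\Tr M,R)$ is a torsion module. The paper first treats the torsion-free case, where the torsionness of $\Ext^1_R(\Tr M,R)$ combined with its embedding into $M$ forces it to vanish, giving reflexivity and then $M\cong M^{**}$ projective; the general case is then reduced to this one by applying $\Hom_R(-,R)$ to $0\to t(M)\to M\to M/t(M)\to 0$, using $(t(M))^*=0$ to get a short exact sequence of transposes and conclude $\pd_R(\Tr(M/t(M)))\le 1$, so that $M/t(M)$ is projective and the torsion sequence splits. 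You instead work in one pass with the full Auslander--Bridger sequence $0\to\Ext^1_R(\Tr M,R)\to M\to M^{**}\to\Ext^2_R(\Tr M,R)\to 0$, kill the $\Ext^2$ term, split off the projective $M^{**}$, and then identify $\ker\sigma_M=\Ext^1_R(\Tr M,R)$ with $t(M)$ exactly (the inclusion $t(M)\subseteq\ker\sigma_M$ being elementary, and the reverse following from prime avoidance over $\Ass(R)$ to produce a non-zerodivisor in the annihilator). Your route buys a sharper intermediate statement --- the kernel of the biduality map coincides with the torsion submodule under this hypothesis --- and avoids the exact sequence of transposes from Ma\c{s}ek's lemma; the paper's route avoids having to prove the exact equality $\ker\sigma_M=t(M)$ by splitting off $M/t(M)$ directly. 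Both are complete.
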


\begin{proof}
We first do the case when $M$ is torsion-free, i.e., $t(M)=0$.  Since $\pd_R(\Tr M)\le 1$, one obtains that $\Ext^2_R(\Tr M, R)=0$. Since $\pd_R(\Tr M)\le 1$, by Auslander-Buchsbaum formula, $\Tr M$ is locally free on $\Ass(R)$, hence $\Ext^1_R(\Tr M, R)$ vanishes locally on $\Ass(R)$, i.e., $\Ext^1_R(\Tr M, R)$ is a torsion module. Remember that $\Ext^1_R(\Tr M, R)$ embeds in $M$, see, e.g., \cite[1.4.21]{BH93}. Therefore, since $M$ is torsion-free, $\Ext^1_R(\Tr M, R)$ has to be torsion-free as well. Thus $\Ext^1_R(\Tr M, R)$ must be zero. So $M$ is reflexive (cf.~\cite[1.4.21]{BH93}). Since $\pd_R(\Tr M)\le 1$, in view of the exact sequence \eqref{eqn:es-M*-Tr-M}, it follows that $M^*$ is projective. Hence $M\cong M^{**}$ is also projective. 

Now we tackle the general case. Consider the short exact sequence $0\to t(M)\to M\to M/t(M)\to 0$, which gives rise to an exact sequence (see \cite[Lemma 6]{mas})
$$0\to (M/t(M))^*\to M^* \to (t(M))^*\to \Tr(M/t(M))\to \Tr M\to \Tr(t(M))\to 0.$$
Since $t(M)$ is a torsion module, it follows that $(t(M))^*=0$, which yields a short exact sequence $0\to \Tr(M/t(M))\to \Tr M\to \Tr(t(M))\to 0$. Also $(t(M))^*=0$ implies that $\pd_R(\Tr(t(M)))\le 1$.
Therefore, since $\pd_R(\Tr M)\le 1$, one obtains that $\pd_R(\Tr(M/t(M))\le 1$.
Since $M/t(M)$ is torsion-free, from the torsion-free portion proved earlier, we deduce that $M/t(M)$ is projective, and so the short exact sequence $0\to t(M)\to M\to M/t(M)\to 0$ splits, proving our claim.   
\end{proof}

Using Lemma~\ref{trt}, we prove that every module $M$ satisfying $\Ext_R^{1\le i \le \depth(R)-1}(M,R)=0$ and $\pd_R(M^*)<\infty$ must be stably isomorphic to its torsion submodule.  


\begin{proposition}\label{trt2}
Let $R$ be a local ring of depth $t$, and let $M$ be an $R$-module such that $\pd_R(M^*)<\infty$.
\begin{enumerate}[\rm (1)]
    \item If $\Ext_R^{1\le i \le t-1}(M,R)=0$, then $M\cong t(M)\oplus G$ for some free $R$-module $G$.
    \item If $\Ext_R^{1\le i \le t}(M,R)=0$, then $M$ is free.
\end{enumerate} 
\end{proposition}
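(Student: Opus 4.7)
The plan is to show that $M^*$ is free in both (1) and (2) using the Ext--depth inequality, then for (1) establish $\pd_R(\Tr M) \le 1$ via a depth chase in the dualized resolution so that Lemma~\ref{trt} applies, and finally deduce (2) from (1) by a grade-versus-depth contradiction.

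First I would apply Lemma~\ref{extres}(2) at the maximal ideal to the common hypothesis $\Ext_R^{1\le i\le t-1}(M,R)=0$: this gives $\depth_R(M^*) \ge t$, and combined with $\pd_R(M^*) < \infty$ and the Auslander--Buchsbaum formula, this forces $\pd_R(M^*) = 0$, i.e.\ $M^*$ is free. An elementary observation then shows that either $M$ is free (in which case (1) is immediate with $G = M$) or $\pd_R(M) \ge t$, since having $1 \le \pd_R(M) \le t-1$ would force $\Ext_R^{\pd_R(M)}(M,R) \ne 0$.

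For the main step of (1), I would take a minimal free resolution $\cdots \to F_1 \to F_0 \to M \to 0$ and dualize. Setting $C_0 := M^*$ and $C_i := \mathrm{im}(F_{i-1}^* \to F_i^*)$ for $i \ge 1$, the Ext vanishing makes the dualized complex exact at positions $1,\ldots,t-1$, yielding short exact sequences $0 \to C_i \to F_i^* \to C_{i+1} \to 0$ for $0 \le i \le t-1$. Since each $F_i^*$ is free of depth $t$, long exact sequences in $\Ext_R^*(k,-)$ collapse to the chain of isomorphisms
\[
    \Ext_R^{t-1}(k, C_1) \cong \Ext_R^{t-2}(k, C_2) \cong \cdots \cong \Hom_R(k, C_t)
\]
when $t \ge 2$ (while for $t = 1$ the submodule $C_1 \subseteq F_1^*$ is torsion-free directly). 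The key point is that $C_t$ is nonzero and torsion-free: nonzeroness follows from $\pd_R(M) \ge t$, which makes $F_t \to F_{t-1}$ nonzero and hence its dual nonzero; torsion-freeness follows from $C_t \subseteq F_t^*$ being a submodule of a free module over a ring of depth $\ge 1$. This gives $\Hom_R(k,C_t)=0$, so $\depth_R(C_1) \ge t$, and by Auslander--Buchsbaum $C_1$ is free. Since $C_1$ is a first syzygy of $\Tr M$, we conclude $\pd_R(\Tr M) \le 1$, and Lemma~\ref{trt} supplies the desired decomposition. The case $t = 0$ is handled separately: $\pd_R(\Tr M) \le 2$ combined with $\depth R = 0$ forces $\Tr M$ to be free, whence $M$ is stably free and hence free.

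For (2), applying (1) gives $M \cong t(M) \oplus G$ with $G$ free, and I would argue $t(M) = 0$ using the extra vanishing. If $t = 0$ this is automatic since non-zero-divisors of $R$ are units. For $t \ge 1$, $\Hom_R(t(M),R)=0$ (any torsion element is killed by some non-zero-divisor of $R$, forcing its image in the torsion-free $R$ to vanish); together with $\Ext_R^{\ge 1}(G,R)=0$ and the hypothesis, this gives $\Ext_R^{0 \le i \le t}(t(M),R)=0$, i.e.\ $\grade(t(M),R) \ge t+1$. But the general bound $\grade(N,R) \le \depth R = t$ for any nonzero finitely generated $N$ (since $\mathrm{ann}_R(N) \subseteq \mathfrak{m}$) gives a contradiction, forcing $t(M) = 0$. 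The main delicate step is the depth-chain argument in (1); the remaining pieces are standard manipulations of syzygies, stable isomorphisms, and long exact sequences.
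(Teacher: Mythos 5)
Your proof is correct, and its skeleton coincides with the paper's: both reduce part (1) to showing $\pd_R(\Tr M)\le 1$ and then invoke Lemma~\ref{trt}, and both ultimately rest on Auslander--Buchsbaum applied to (a syzygy of) $\Tr M$. The difference is in how the depth estimate is obtained. The paper observes that the hypothesis makes $\Tr M$ an $n$-torsion-free module ($n=t-1$ or $t$), quotes Ma\c{s}ek's result that $n$-torsion-free modules have depth at least $\inf\{n,\depth R\}$, and applies Auslander--Buchsbaum directly to $\Tr M$ (using $\pd_R(\Tr M)<\infty$, which follows from $\pd_R(M^*)<\infty$ via the four-term sequence \eqref{eqn:es-M*-Tr-M}); for part (2) it concludes immediately that $\Tr M$ is free, hence $M$ is. You instead first force $M^*$ to be free via Lemma~\ref{extres}.(2), and then reprove the relevant depth bound by hand, chasing $\Ext^\bullet_R(k,-)$ through the cokernels of the dualized resolution to show the syzygy $C_1$ of $\Tr M$ has depth $\ge t$; your part (2) is then derived from part (1) by the grade-versus-depth argument (which is exactly the paper's Remark~\ref{22}, used there in the proof of Theorem~\ref{thm:Hom-pd-id}.(2) rather than here). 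Your route is more self-contained, at the cost of length; the digression about $C_t\neq 0$ is unnecessary (the vanishing $\Hom_R(k,C_t)=0$ holds regardless since $C_t$ embeds in a free module of positive depth), and you should note explicitly that the displayed chain for $\Ext^{t-1}_R(k,C_1)$ has analogues terminating at $\Hom_R(k,C_{j+1})=0$ for every $j<t-1$, so that all of $\Ext^{0\le j\le t-1}_R(k,C_1)$ vanish, and that $\pd_R(C_1)<\infty$ (from $M^*$ free) is what licenses Auslander--Buchsbaum for $C_1$. These are presentational, not substantive, gaps.
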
 

\begin{proof}
Let $\Ext_R^{1\le i \le n}(M,R)=0$ for some $n$. As $\pd_R(M^*)<\infty$, it follows that $\pd_R(\Tr M)<\infty$. Since $M$ is stably isomorphic to $\Tr(\Tr M)$, we have that $\Ext_R^{1\le i\le n}\left(\Tr(\Tr M),R\right)=0$. Thus $\Tr M$ is $n$-torsion-free. Hence, by \cite[Prop.~11.(c)]{mas}, $\depth(\Tr M) \ge \inf\{n, t \}$.

(1) Since $\Ext_R^{1\le i \le t-1}(M,R)=0$, from the above discussion, $\depth(\Tr M) \ge \inf\{t-1, t \} = t-1$. So, by Auslander-Buchsbaum formula, $\pd_R(\Tr M) = t-\depth(\Tr M) \le 1$. Therefore, by Lemma~\ref{trt}, $M\cong t(M)\oplus G$ for some free $R$-module $G$.

(2) Since $\Ext_R^{1\le i \le t}(M,R)=0$, in this case $\depth(\Tr M) \ge \inf\{t, t \} = t$, and hence $\Tr M$ is free by Auslander-Buchsbaum formula. Being stably isomorphic to $\Tr(\Tr M)$, the module $M$ is also free. 
\end{proof}

\section{Finite injective dimension of Hom and vanishing of Ext}\label{sec:id-hom}

Here we study the consequences of $\Hom_R(M,N)$ having finite injective dimension under the vanishing condition of certain $\Ext_R^i(M,N)$. We start with the following theorem, which generalizes \cite[Cor.~2.10.(2)]{GT21}. Indeed, taking $N=R$ in the theorem below, one recovers \cite[Cor.~2.10.(2)]{GT21}.

\begin{theorem}\label{inj1}
Let $M$ and $N$ be nonzero modules over a local ring $R$ such that
$$ \Ext_R^{1\le i\le \depth(R)}(M,R) = 0 = \Ext_R^{1\le j\le \depth(N)}(M,N), $$
and $\id_R(\Hom_R(M,N))<\infty$. Then, $M$ is free, and consequently, $\id_R(N)<\infty$.  
\end{theorem}

\begin{proof}
Set $t=\depth(R)$. Since $M$ is stably isomorphic to $\Tr(\Tr(M))$, by the given hypothesis, $\Ext_R^{1\le i\le t}\left(\Tr(\Tr M),R\right)=0$. Thus $\Tr(M)$ is $t$-torsion-free. Therefore, by \cite[Prop.~11.(c)]{mas}, $\depth(\Tr M) \ge \inf\{t, \depth R \}=t$. Hence, since $\id_R(\Hom_R(M,N))<\infty$, it follows that 
\begin{equation}\label{Ext-of-Tr-vanishing}
    \Ext^{>0}_R\left(\Tr M, \Hom_R(M,N)\right) = 0,
\end{equation}
see, e.g., \cite[3.1.24]{BH93}. So the claim of $M$ being free follows from Proposition~\ref{propnew}. Consequently, since $\id_R(\Hom_R(M,N))<\infty$, one has that $\id_R(N)<\infty$.
\end{proof}

Next we considerably strengthen \cite[Thm.~2.15]{GT21}.
Indeed, taking even $N=M\neq 0$ in the following theorem, one recovers and improves \cite[Thm.~2.15]{GT21}. Here we need first $\depth(R)$ number of vanishing of $\Ext_R^i(M,R)$, unlike in \cite[Thm.~2.15]{GT21}, where $(2\depth(R)+1)$ number of vanishing of Ext is required.

\begin{theorem}\label{injimprov}
Let $M$ and $N$ be modules over a local ring $R$. Set $t:=\depth(R)$. Suppose that $\Hom_R(M,N)$ is nonzero, and $\Ext_R^i(M,R)=\Ext_R^j(M,N)=0$ for all $1\le i\le t$ and $1\le j\le t -1$. Assume that $\id_R(\Hom_R(M,N))$ is finite. Then, $M$ is free, and $\id_R(N) <\infty$.
\end{theorem}

\begin{proof}
Note that $R$ is CM by Bass’ Conjecture, see, e.g., \cite[9.6.2 and 9.6.4]{BH93}. As $R$ is CM and $N\neq 0$, we have that $\depth N\le t$. If $\depth N\le t-1$, then we are done by Theorem~\ref{inj1}. So we may assume that $\depth N=t$. Then, by \cite[Thm.~2.3]{GT21}, $\id_R(N) < \infty$, and $M\cong \Gamma_{\fm}(M)\oplus R^{\oplus r}$ for some $r\ge 0$. It remains to prove that $M$ is free. Two possible cases appear.

Case I:
Assume that $t=0$. Then $R$ is Artinian. Therefore both $N$ and $\Hom_R(M,N)$ are (nonzero) injective modules as they have finite injective dimension. Moreover $N\cong E^{\oplus n}$ and $\Hom_R(M,N) \cong E^{\oplus s}$ for some positive integers $n$ and $s$, where $E$ is the injective hull of the residue field of $R$. Hence
\begin{equation}\label{Hom-E-iso}
    E^{\oplus s} \cong \Hom_R(M,N) \cong \Hom_R(M,E)^{\oplus n}
\end{equation}
Dualizing \eqref{Hom-E-iso} with respect to $E$, and using Matlis duality, we get that $R^{\oplus s} \cong M^{\oplus n}$, which implies the direct summand $M$ is free. 

Case II: Suppose $t\ge 1$. If possible, assume that $\Gamma_{\fm}(M) \neq 0$. Since $\Gamma_{\fm}(M)$ is a module of finite length, and $t=\depth(R)$, it follows that $\Ext^t_R(\Gamma_{\fm}(M),R)\neq 0$. On the other hand, since $M\cong \Gamma_{\fm}(M)\oplus R^{\oplus r}$, from the given hypothesis, we obtain that $\Ext^{1\le i\le t}_R(\Gamma_{\fm}(M),R)=0$. Thus we get a contradiction. Therefore $\Gamma_{\fm}(M)=0$. It follows that $ M \cong R^{\oplus r} $, which is free.
\if0
\old{
If $\Gamma_{\fm}(M)\neq 0$, then $\grade \Gamma_{\fm}(M)\le d$, so we then must have  $\Hom_R(\Gamma_{\fm}(M),R)\neq 0$. However, $\Ass \Hom_R(\Gamma_{\fm}(M),R)=\Supp \Gamma_{\fm}(M) \cap \Ass(R)\subseteq \{\fm\} \cap \Ass(R)=\emptyset$ as $\depth R=d>0$, so $\fm \notin \Ass(R)$. Thus, we must have $\Gamma_{\fm}(M)=0$, hence $M\cong R^{\oplus r}$.
}
\fi 
\end{proof}

Theorems~\ref{inj1} and \ref{injimprov} provide partial positive answers to \cite[Ques.~2.9]{GT21}. Next we show that the vanishing condition on $\Ext_R^j(M,N)$ in Theorems~\ref{inj1} and \ref{injimprov} can be removed provided $\pd_R(N) < \infty$. For that, we need the following lemma.

\begin{lemma}\label{2.3}
Let $M$ and $N$ be $R$-modules over a local ring $R$. Let $h = \pd_R(N) < \infty$. Then the following statements hold true.
\begin{enumerate}[\rm(1)]
    \item If $\Ext_R^{1\le i\le h+1}(M,R)=0$, then $\Ext^1_R(M,N)=0$.
    \item If $\Ext^{1\le i\le s+h+1}_R(M,R)=0$ for some integer $s\ge 0$, then
    $$\Ext^{1\le i\le s+1}_R(M,N)=0.$$
\end{enumerate}  
\end{lemma}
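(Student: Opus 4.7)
The plan is to reduce both parts to a single dimension-shifting calculation along a finite free resolution of $N$. Since $R$ is local and $N$ is finitely generated with $\pd_R(N)=h<\infty$, there is a finite free resolution
\[
0 \to P_h \to P_{h-1} \to \cdots \to P_1 \to P_0 \to N \to 0,
\]
which I would break into short exact sequences $0 \to K_{i+1} \to P_i \to K_i \to 0$ for $0\le i\le h-1$, where $K_0=N$ and $K_h\cong P_h$ is free. The key observation is that each $P_i$ being a finite free $R$-module makes $\Ext^m_R(M,P_i)$ a finite direct sum of copies of $\Ext^m_R(M,R)$, so the hypothesized vanishing of $\Ext^m_R(M,R)$ transfers to vanishing of $\Ext^m_R(M,P_i)$ for every $i$ at the same degrees $m$.

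Part (2) is then proved by iterated dimension shifting: for a fixed $j$ with $1\le j\le s+1$, applying $\Hom_R(M,-)$ to $0 \to K_{i+1} \to P_i \to K_i \to 0$ gives
\[
\Ext^{j+i}_R(M,P_i) \to \Ext^{j+i}_R(M,K_i) \to \Ext^{j+i+1}_R(M,K_{i+1}) \to \Ext^{j+i+1}_R(M,P_i).
\]
So provided both outer terms vanish, the middle map is an isomorphism. Running $i$ from $0$ to $h-1$, the indices $j+i$ and $j+i+1$ sweep through $j,j+1,\dots,j+h$, which lies inside $[1,s+h+1]$ by the hypothesis on $j$; hence every outer $\Ext$ vanishes. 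Chaining the isomorphisms gives
\[
\Ext^j_R(M,N) \;\cong\; \Ext^{j+1}_R(M,K_1) \;\cong\; \cdots \;\cong\; \Ext^{j+h}_R(M,K_h),
\]
and the last term is a direct sum of copies of $\Ext^{j+h}_R(M,R)=0$ since $j+h\le s+h+1$.

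Part (1) is the specialization $s=0$, so no separate argument is needed. There is essentially no obstacle here beyond careful bookkeeping of index ranges: one must verify that the hypothesis $\Ext^{1\le i\le s+h+1}_R(M,R)=0$ supplies exactly the vanishing needed at every step of the dimension shift for every $j$ in $\{1,\dots,s+1\}$, which it does by the above arithmetic.
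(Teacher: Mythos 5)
Your proposal is correct and takes essentially the same approach as the paper: dimension shifting along the short exact sequences coming from a finite free resolution of $N$, using that $\Ext_R^m(M,P_i)$ is a direct sum of copies of $\Ext_R^m(M,R)$. The only cosmetic difference is that the paper proves part (1) first and then obtains part (2) by applying (1) to the syzygies $\Omega^n_R(M)$ for $0\le n\le s$, whereas you run the same index-shifting chain directly for each $j\in\{1,\dots,s+1\}$; the arithmetic of the vanishing range is identical.
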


\begin{proof}
(1) Note that the $h$-th syzygy module $\Omega^h(N)$ is free as $h = \pd_R(N)$. Since $\Ext_R^{1\le i\le h+1}(M,R)=0$, the exact sequences $0 \to \Omega^{j+1}(N) \to R^{\beta_j} \to \Omega^j(N) \to 0$ for all $0\le j\le h-1$ yield that
\begin{align*}
    \Ext_R^1(M,N) &\cong \Ext_R^2(M,\Omega^1(N)) \cong \Ext_R^3(M,\Omega^2(N)) \\
    &\cong \cdots \cong \Ext_R^h(M,\Omega^{h-1}(N)) \cong \Ext_R^{h+1}(M,\Omega^h(N)) = 0,
\end{align*}
where the last equality holds as $\Omega^h(N)$ is free.

(2) By hypothesis, $\Ext^{1\le i\le h+1}_R(\Omega^n_R(M),R)=0$ for all $0\le n \le s$.  Hence, by (1), $\Ext^1_R(\Omega^n_R(M),N)=0$ for all $0\le n\le s$. So $\Ext^{1\le i\le s+1}_R(M,N)=0$.  
\end{proof}

Now we see another generalization of \cite[Cor.~2.10.(2)]{GT21}. In fact, substituting $N=R$ in Corollary~\ref{injcor} below, one obtains \cite[Cor.~2.10.(2)]{GT21}.

\begin{corollary}\label{injcor}
Let $M$ and $N$ be nonzero modules over a local ring $R$ such that $\pd_R(N)<\infty$, 
$\Ext_R^{1\le i\le \depth(R)}(M,R)=0$ and $\id_R(\Hom_R(M,N))<\infty$. Then, $M$ is free, and $R$ is Gorenstein. 
\end{corollary}

\begin{proof}
If $\depth(N)=0$, then $M$ is free by Theorem~\ref{inj1}. So we may assume that $\depth(N)\ge 1$. Since $\pd_R(N)<\infty$, by Auslander-Buchsbaum formula, $\depth(R) = \depth(N) + \pd_R(N)$. Then, by the given hypothesis,
$$\Ext_R^{1\le i\le (\depth(N)-1)+\pd_R(N)+1}(M,R)=\Ext_R^{1\le i\le \depth(R)}(M,R)=0.$$
Therefore, in view of Lemma~\ref{2.3}(2), $ \Ext^{1\le i\le \depth(N)}_R(M,N) = 0$. Hence, by Theorem~\ref{inj1}, $M$ is free. 
As $M$ is free, we also get $ \id_R(N) < \infty $. Thus both $\pd_R(N)$  and $\id_R(N)$ are finite.  So, by Foxby's theorem, $R$ is Gorenstein.
\end{proof}


\section{Generalized residually faithful and vanishing of Ext}\label{sec:res-faithful}

The notion of residually faithful modules was introduced and studied in \cite[Sec.~5]{BV01} and later studied by Goto-Kumashiro-Loan \cite{GKL19}.

\begin{definition}\cite[Defn.~5.1]{BV01}, \cite[Defn.~3.1]{GKL19}\label{defn:res-faithful-GKL}
    Let $R$ be a CM local ring. An MCM $R$-module $M$ is said to be residually faithful if $M/\fq M$ is a faithful $R/\fq$-module for some parameter ideal $\fq$ of $R$.
\end{definition}

Taking cue from Definition~\ref{defn:res-faithful-GKL}, we introduce the following notions. 

\begin{definition}\label{defn:res-faithful}
Let $n\ge 0$. An $R$-module $M$ is called:
\begin{enumerate}[(1)]
    \item $n$-residually faithful (resp., strongly $n$-residually faithful) if there exists an $M$-regular (resp., $M$-regular as well as $R$-regular) sequence ${\bf x} := x_1,\dots,x_n$ in $R$ such that $M/{\bf x} M$ is a faithful $R/{\bf x} R$-module.
    \item 
    absolutely $n$-residually faithful if for every $M$-regular sequence ${\bf x} := x_1,\dots,x_n$, the module $M/{\bf x} M$ is faithful over $R/{\bf x} R$.
    \item
    universally $n$-residually faithful if for every sequence ${\bf x} := x_1,\dots,x_n$ regular on both $R$ and $M$, the module $M/{\bf x} M$ is faithful over $R/{\bf x} R$.    
    \item 
    As conventions, we take a (strongly/absolutely/universally) $0$-residually faithful $R$-module to be just a faithful $R$-module.
\end{enumerate}   
\end{definition} 

\begin{remark}\label{remrf}
    For an $R$-module $M$, consider the implications:
    \[
        \xymatrix{\mbox{absolutely $n$-residually faithful} \ar@{=>}[r]^{\rm (a)} \ar@{=>}[d]_{\rm (d)}^{\depth(M)\ge n} & \mbox{universally $n$-residually faithful} \ar@/{}^{1.3pc}/@{=>}[d]_{\rm (b)}^{\depth(M\oplus R)\ge n} \ar@/{}_{1.3pc}/@{<=>}[d]_{\rm MCM, dim=n} \\
        \mbox{$n$-residually faithful} & \mbox{strongly $n$-residually faithful} \ar@{=>}[l]_{\rm (c)}}
    \]
    \begin{enumerate}[(1)]
        \item 
        The implications (a) and (c) hold true always, while (d) is true if $\depth(M)\ge n$, and (b) is true if both $\depth(M)\ge n$ and $\depth(R)\ge n$. Note that if $\depth_R(M)<n$, then $M$ is vacuously  absolutely $n$-residually faithful, but not $n$-residually faithful. If $\depth M<n$ or $\depth R<n$, then vacuously $M$ is universally $n$-residually faithful, but not strongly $n$-residually faithful.
        \item 
        Let $M$ be an MCM module over a CM local ring $R$ of dimension $n$. Then, the converse to (b) is also true by \cite[Prop.~3.2]{GKL19}.
        \item By definition, $R$ is absolutely $n$-residually faithful $R$-module for every $n$.
    \end{enumerate}
\end{remark}

\begin{para}\label{para:R-S-res-faith}
    Let $(R,\m)\to (S,\fn)$ be a flat local homomorphism, where  $R$ is CM and $\fm S = \fn$. Then, an $R$-module $M$ is residually faithful \iff the $S$-module $S\otimes_R M$ is so. Indeed, since $R\to S$ is flat and $\fm S = \fn$, one has that $\dim_R(M) = \dim_S(S\otimes_R M)$ and $\depth_R(M) = \depth_S(S\otimes_R M)$. Therefore, since $R$ is CM, $S$ is also CM. Moreover, $M$ is MCM over $R$ \iff $S\otimes_R M$ is so over $S$. Hence the desired result follows from \cite[Prop.~3.6(2)]{GKL19}.
\end{para}

The following questions arise naturally from Remark~\ref{remrf}.

\begin{question}
    Regarding the implications in Remark~\ref{remrf}, for an MCM module over a CM local ring $R$ of dimension $>n$, is the converse to (b) still true? 
        
\end{question}


\begin{para}\label{para:surj-map-on-strong-n-resd}
    Let $M$ and $N$ be $R$-modules with a surjection $M^{\oplus s}\twoheadrightarrow N$ for some $s\ge 1$. Let $n\ge 0$ be an integer. If $M$ satisfies $(\widetilde S_n)$ and $N$ is strongly $n$-residually faithful, then $M$ is strongly $n$-residually faithful. Indeed, since $N$ is strongly $n$-residually faithful, by definition, there exists an $R$ and $N$-regular sequence ${\mathbf x}:=x_1,\dots,x_n$ such that $N/\mathbf x N$ is $R/\mathbf x R$-faithful. As $M$ satisfies $(\widetilde S_n)$, the sequence ${\bf x}$ is also $M$-regular (\cite[Prop.~2]{Mal68}). Tensoring $M^{\oplus s}\twoheadrightarrow N$ with $R/\mathbf x R$, we get another surjection $(M/\mathbf x M)^{\oplus s} \twoheadrightarrow N/\mathbf x N $. Hence the claim follows.
\end{para}

A $($strongly$)$ $n$-residually faithful module is $($strongly$)$ $(n-1)$-residually faithful.

\begin{lemma}
Let $M$ be a $($strongly$)$ $n$-residually faithful module over a local ring $R$ for some $n\ge 1$. Then, $M$ is $($strongly$)$ $(n-1)$-residually faithful. In particular, any $n$-residually faithful module is faithful.
\end{lemma}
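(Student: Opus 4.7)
My plan is to prove the lemma by showing that if $x_1,\dots,x_n$ is a witnessing sequence for (strong) $n$-residual faithfulness of $M$, then simply truncating to $x_1,\dots,x_{n-1}$ witnesses (strong) $(n-1)$-residual faithfulness. The truncated sequence is obviously $M$-regular (and $R$-regular in the strong case), so the entire content is the faithfulness of the quotient over the quotient ring. Set $I := (x_1,\dots,x_{n-1})$, $\bar R := R/I$, $\bar M := M/IM$, and $J := \ann_{\bar R}(\bar M)$; the goal is to show $J = 0$.

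The key step uses Nakayama's lemma, which I will reach as follows. By hypothesis $\bar M/x_n\bar M \cong M/(x_1,\dots,x_n)M$ is faithful over $\bar R/x_n\bar R$. Lifting this to $\bar R$, if $r \in R$ satisfies $rM \subseteq IM$, then $r \in (x_1,\dots,x_n)$, so the image of $J$ in $\bar R$ is contained in $x_n\bar R$; equivalently $J \subseteq x_n\bar R$ inside $\bar R$. Pick any $r \in J$ and write $r = x_n s$ in $\bar R$. Then $x_n s\,\bar M = r\bar M = 0$, and since $x_n$ is regular on $\bar M$ (being part of an $M$-regular sequence), $s\bar M = 0$, i.e.\ $s \in J$. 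Hence $J \subseteq x_n J$, and the reverse containment is trivial, so $J = x_n J$. Since $J$ is a finitely generated $\bar R$-module (an ideal in a Noetherian ring) and $x_n$ lies in the maximal ideal $\fm\bar R$ of the local ring $\bar R$ (any $R$-regular or $M$-regular element in a local ring is a non-unit), Nakayama's lemma yields $J = 0$, as required.

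For the ``in particular'' clause, I will just iterate: by the first part, an $n$-residually faithful module is $(n-1)$-residually faithful, hence inductively $0$-residually faithful, which by the convention in Definition~\ref{defn:res-faithful}(4) is the same as being faithful. The main thing I would need to be careful about is the strong variant: the truncation of a sequence that is both $R$- and $M$-regular remains so, so the argument transports verbatim. I do not expect a real obstacle here; the only subtle point is the Nakayama application, which is clean so long as one works with $J$ as an ideal of the local ring $\bar R$ rather than trying to bound $\ann_R(M)$ directly in $R$.
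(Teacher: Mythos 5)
Your proof is correct, and the core mechanism is the same as the paper's: truncate the witnessing sequence, pass to $\bar R = R/(x_1,\dots,x_{n-1})$, and use that $x_n$ is regular on $\bar M$ to ``divide'' annihilating elements by $x_n$. The difference lies in the finishing move. The paper reduces to the case $n=1$ and then runs an elementwise infinite descent: it shows $M/x^tM$ is faithful over $R/x^tR$ for every $t\ge 1$, so that $\ann_R(M)\subseteq \bigcap_{t\ge 1} x^tR$, and concludes by Krull's intersection theorem. You instead package the single descent step at the level of the annihilator ideal $J=\ann_{\bar R}(\bar M)$, obtaining $J=x_nJ$ and invoking Nakayama. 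Your version is arguably cleaner (one application of Nakayama replaces the iterated lifting and the appeal to Krull intersection), and it dispenses with the separate induction on $n$, since the whole argument takes place in $\bar R$ in one shot. Both arguments rely on the same implicit convention that a regular sequence on a nonzero finitely generated module over a local ring lies in the maximal ideal (you need this for Nakayama, the paper needs it for $\bigcap_t x^tR=0$), so no new hypothesis is being smuggled in. The treatment of the strong variant and the ``in particular'' clause by iteration down to the $0$-residually faithful convention is also fine.
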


\begin{proof}
First we consider the case $n=1$. Then there exists an ($R$-regular as well as) $M$-regular element $x$ in $R$ such that $M/xM$ is $R/xR$-faithful. We claim that $M/x^tM$ is $R/x^tR$-faithful for every $t\ge 1$. In order to show this, let $r\in R$ be such that $r+x^tR$ annihilates $M/x^tM$. Then $rM\subseteq x^t M$. In particular, $rM\subseteq xM$, i.e., $r+xR\in \ann_{R/xR}(M/xM)$. So $r\in xR$, and hence $r=xr_1$ for some $r_1\in R$. Now $xr_1M\subseteq x^t M$ and $x$ being $M$-regular imply that $r_1 M\subseteq x^{t-1}M$. If $t\ge 2$, repeating the same procedure, inductively, we end up with $r=x^tr_t\in x^tR$, which shows that $M/x^tM$ is $R/x^tR$-faithful for every $t\ge 1$. Thus $\ann_R(M)\subseteq \bigcap_{t\ge 1}\ann_R(M/x^t M) =\bigcap_{t\ge 1} x^tR=0$, where the last equality follows from Krull's intersection theorem. Therefore $M$ is faithful over $R$.

Let $n \ge 2$. There exists an ($R$-regular as well as) $M$-regular sequence ${\bf x} := x_1,\dots,x_n$ in $R$ such that $M/{\bf x} M$ is $R/{\bf x} R$-faithful. Then $M/(x_1,\dots,x_{n-1})M$ is (strongly) $1$-residually faithful over $R/(x_1,\dots,x_{n-1})R$, hence $M/(x_1,\dots,x_{n-1})M$ is faithful over $R/(x_1,\dots,x_{n-1})R$. Thus, by definition, $M$ is (strongly) $(n-1)$-residually faithful.  
\end{proof}

For a module $L$ over a local ring $R$ of depth $t$, the syzygy module $\syz_R^{t+1}(L)$ is never strongly $t$-residually faithful.  


\begin{lemma}\label{syzf}
Let $(R,\m)$ be a local ring of depth $t$. Then, the following hold true:
\begin{enumerate}[\rm(1)]
    \item Let $M$ be an $R$-module satisfying $(\widetilde S_t)$ $($cf.~{\rm\ref{para-Sn}}$)$. Then $\syz_R(M)$ is not strongly $t$-residually faithful. In particular, for an $R$-module $L$, $\syz_R^{t+1}(L)$ is not strongly $t$-residually faithful.
    \item Assume that $\depth(M) \ge t$. Then, $\syz_R(M)$ is not universally $t$-residually faithful. In particular, for an $R$-module $L$, $\syz_R^{t+1}(L)$ is not universally $t$-residually faithful. 
\end{enumerate}
\end{lemma}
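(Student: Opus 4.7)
The common mechanism for both parts will be a ``Koszul socle trap''. I will pick a sequence ${\bf x} = x_1,\dots,x_t$ of length $t = \depth(R)$ that is simultaneously $R$-regular and $M$-regular, and exploit that since ${\bf x}$ is then a maximal $R$-regular sequence, we have $\depth(R/{\bf x}R)=0$ and hence $\Soc(R/{\bf x}R) \neq 0$. Writing $N := \syz_R(M) \subseteq \fm R^n$ via a minimal free cover $0 \to N \to R^n \to M \to 0$, the Koszul complex gives $\Tor_1^R(R/{\bf x}R,M)=0$, so the induced map $N/{\bf x}N \to (R/{\bf x}R)^n$ is injective. Combined with $N \subseteq \fm R^n$, this places $N/{\bf x}N$ inside $(\fm/{\bf x}R)(R/{\bf x}R)^n$, and then any nonzero $\bar s \in \Soc(R/{\bf x}R)$ annihilates $N/{\bf x}N$. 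This will be the obstruction to faithfulness of $N/{\bf x}N$ over $R/{\bf x}R$.

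For part (1), the strategy will be proof by contradiction. Assume $N = \syz_R(M)$ is strongly $t$-residually faithful; this furnishes an ${\bf x}$ of length $t$ regular on both $R$ and $N$ with $N/{\bf x}N$ faithful over $R/{\bf x}R$. The key extra input is the hypothesis that $M$ satisfies $(\widetilde S_t)$: by Mal'cev's result (already cited as \cite[Prop.~2]{Mal68} in \ref{para:surj-map-on-strong-n-resd}), an $R$-regular sequence on a module satisfying $(\widetilde S_t)$ of length $\le t$ is automatically module-regular, so our ${\bf x}$ is $M$-regular. Now the core mechanism above applies and yields a contradiction. The ``in particular'' clause then follows by specializing to $M=\syz^t_R(L)$, because iterating the depth lemma along a minimal free resolution yields $\depth_{R_\p}(\syz^t_R(L))_\p \ge \min\{t,\depth(R_\p)\}$ for every $\p \in \Spec(R)$, i.e.\ $\syz^t_R(L)$ satisfies $(\widetilde S_t)$; then $\syz^{t+1}_R(L) = \syz_R(\syz^t_R(L))$ falls under part (1).

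For part (2) the task is easier since ``universal'' only requires producing \emph{one} offending ${\bf x}$. Under $\depth(M)\ge t$, the depth lemma applied to $0\to N \to R^n \to M \to 0$ gives $\depth(N)\ge\min\{t,\depth(M)+1\}=t$, and prime avoidance then supplies an ${\bf x}$ of length $t$ in $\fm$ simultaneously regular on $R$, $M$, and $N$; the core mechanism applied to this ${\bf x}$ shows $N/{\bf x}N$ is not faithful over $R/{\bf x}R$, so $N$ fails to be universally $t$-residually faithful. The edge case $t=0$ I will handle directly: then $\fm\in\Ass(R)$ produces $0\neq s\in R$ with $s\fm=0$, and $sN\subseteq s\fm R^n=0$ shows $N$ itself is non-faithful. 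For the ``in particular'' statement I will again note $\depth(\syz^t_R(L))\ge t$ by the depth lemma and invoke the first assertion of part (2) with $M=\syz^t_R(L)$.

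The principal obstacle I anticipate is the simultaneous regularity requirement: the Koszul/Tor-vanishing and the socle-annihilation inclusion both need the \emph{same} ${\bf x}$, which must be regular on all of $R$, $M$, and $N$. In (1) this is secured for free, since the strongly-faithful hypothesis hands us ${\bf x}$ regular on $R$ and $N$, and $(\widetilde S_t)$ promotes it to being $M$-regular; in (2) one instead has to combine the depth-lemma estimate for $N$ with prime avoidance. Beyond this bookkeeping, the argument is essentially driven by the single structural observation that $\depth(R)=t$ is exactly the length at which a regular sequence creates a socle in $R/{\bf x}R$ while keeping $N/{\bf x}N$ trapped inside the maximal ideal of $R/{\bf x}R$ times a free module.
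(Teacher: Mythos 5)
Your proposal is correct and follows essentially the same route as the paper: assume (or choose) a length-$t$ sequence ${\bf x}$ regular on $R$, $M$ and $N=\syz_R(M)$ (using \cite[Prop.~2]{Mal68} in part (1) to promote ${\bf x}$ to an $M$-regular sequence), deduce $\Tor_1^R(R/{\bf x}R,M)=0$ so that $N/{\bf x}N$ embeds into $\fm(R/{\bf x}R)^{\oplus\mu(M)}$, and conclude that the non-zero socle of $R/{\bf x}R$ kills $N/{\bf x}N$, contradicting faithfulness. The only cosmetic difference is that you obtain the containment in $\fm(R/{\bf x}R)^{\oplus\mu(M)}$ directly from $N\subseteq\fm R^{\oplus\mu(M)}$, whereas the paper argues via $\mu_R(M)=\mu_{R/{\bf x}R}(M/{\bf x}M)$; these are the same observation.
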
   

\begin{proof}
Set $N:=\syz_R(M)$. 

(1) If possible, let $N$ be strongly $t$-residually faithful. Then there exists a sequence $ {\bf x} = x_1,\dots,x_t $ regular on $R$ and $N$ such that $N/{\bf x} N$ is $R/{\bf x} R$-faithful. As $M$ satisfies $(\widetilde S_t)$, the sequence ${\bf x}$ is also $M$-regular (\cite[Prop.~2]{Mal68}). Hence $\Tor^R_1(M,R/{\bf x} R)=0$. We have a short exact sequence $ 0 \to N \to R^{\oplus \mu(M)} \to M \to 0 $, tensoring which by $R/{\bf x} R$, we get another short exact sequence $0\to N/{\bf x} N \xrightarrow{f} (R/{\bf x} R)^{\oplus \mu(M)}\to M/{\bf x}M \to 0$. Therefore, since $\mu_R(M)=\mu_{R/{\bf x} R}(M/{\bf x} M)$, it follows that $\text{Image}(f)\subseteq \m (R/{\bf x} R)^{\oplus \mu(M)}$, hence $N/{\bf x} N \cong \text{Image}(f)$ is annihilated by $\Soc(R/{\bf x} R)$, which is nonzero since $\depth(R/{\bf x} R)=0$. This is a contradiction. Therefore $N$ is not strongly $t$-residually  faithful.

The last part of the claim about $\syz_R^{t+1}(L)$ follows since $\syz^t_R(L)$ satisfies $(\widetilde S_t)$.

(2) Since $\depth(M)\ge t$, we have that $\depth(N)\ge t$. Choose a sequence ${\bf x} := x_1,\dots,x_t$ that is regular on $R,M$ and $N$. Hence $\Tor^R_1(M,R/{\bf x} R)=0$. We have a short exact sequence $ 0 \to N \to R^{\oplus \mu(M)} \to M \to 0 $, tensoring which by $R/{\bf x} R$, we get another short exact sequence $0\to N/{\bf x} N \xrightarrow{f} (R/{\bf x} R)^{\oplus \mu(M)}\to M/{\bf x}M \to 0$. Therefore, since $\mu_R(M)=\mu_{R/{\bf x} R}(M/{\bf x} M)$, it follows that $\text{Image}(f)\subseteq \m (R/{\bf x} R)^{\oplus \mu(M)}$, hence $N/{\bf x} N \cong \text{Image}(f)$ is annihilated by $\Soc(R/{\bf x} R)$, which is nonzero since $\depth(R/{\bf x} R)=0$. Therefore, $N$ is not universally $t$-residually faithful by definition.
\end{proof} 

In view of \cite[Cor.~3.4]{GKL19} and Remark \ref{remrf}, Proposition~\ref{resfaith2} highly generalizes \cite[Thm.~3.9]{GKL19} with even fewer vanishing conditions. 

\begin{proposition}\label{resfaith2}
Let $n\ge 0$, and $M,N$ be $R$-modules with $\Ext_R^{1\le i \le n-1}(M,N)=0$. If $\Hom_R(M,N)$ is absolutely $($resp., universally$)$ $n$-residually faithful, then so is $N$.
\end{proposition}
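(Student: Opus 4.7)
The plan is to directly exploit the embedding produced by Lemma~\ref{extres}.(1), which is precisely the bridge between regularity/faithfulness of $\Hom_R(M,N)$ and of $N$ modulo a regular sequence. I will handle the absolutely and universally cases in parallel, since the only difference is which regular sequences we are allowed to start with.

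First, fix a sequence $\mathbf{x} = x_1,\dots,x_n$ in $R$ which is $N$-regular (for the absolutely case), respectively regular on both $R$ and $N$ (for the universally case). Using the hypothesis $\Ext_R^{1\le i\le n-1}(M,N)=0$, Lemma~\ref{extres}.(1) guarantees that $\mathbf{x}$ is automatically $\Hom_R(M,N)$-regular and that there is an embedding
\[
\Hom_R(M,N)/\mathbf{x}\Hom_R(M,N) \;\hookrightarrow\; \Hom_R(M,\,N/\mathbf{x}N).
\]
In the universally case, the fact that $\mathbf{x}$ is also $R$-regular transfers to $\Hom_R(M,N)$, so the hypothesis on $\Hom_R(M,N)$ can legitimately be invoked against $\mathbf{x}$.

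Next, suppose $r\in R$ represents an element of $\ann_{R/\mathbf{x}R}(N/\mathbf{x}N)$, i.e.\ $rN\subseteq \mathbf{x}N$. Then $r$ kills $N/\mathbf{x}N$, hence kills $\Hom_R(M,N/\mathbf{x}N)$, and so also kills its submodule $\Hom_R(M,N)/\mathbf{x}\Hom_R(M,N)$. By the assumed absolute (resp.\ universal) $n$-residual faithfulness of $\Hom_R(M,N)$ applied to the sequence $\mathbf{x}$, we conclude $r\in \mathbf{x}R$. Therefore $N/\mathbf{x}N$ is faithful over $R/\mathbf{x}R$, and since $\mathbf{x}$ was arbitrary in the required class, $N$ is absolutely (resp.\ universally) $n$-residually faithful, as desired.

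There is no real obstacle here; the entire proof is essentially the one-line observation that a faithful overmodule containing $\Hom_R(M,N)/\mathbf{x}\Hom_R(M,N)$ forces faithfulness of the latter, combined with the transfer of regular sequences through $\Hom_R(M,-)$ supplied by Lemma~\ref{extres}.(1). One small point worth highlighting in the write-up is that in the universally case we need $\mathbf{x}$ to be $R$-regular (not merely $N$-regular) in order to apply the hypothesis on $\Hom_R(M,N)$, which is why the statement matches the definition verbatim in each version.
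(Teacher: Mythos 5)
Your proof is correct and follows essentially the same route as the paper: transfer the regular sequence and the embedding $\Hom_R(M,N)/\mathbf{x}\Hom_R(M,N)\hookrightarrow \Hom_R(M,N/\mathbf{x}N)$ via Lemma~\ref{extres}.(1), then compare annihilators to deduce faithfulness of $N/\mathbf{x}N$ from that of $\Hom_R(M,N)/\mathbf{x}\Hom_R(M,N)$. The only cosmetic difference is that the paper dispatches the $n=0$ case separately (Lemma~\ref{extres} is stated for positive $n$), which your argument covers implicitly via the empty sequence.
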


\begin{proof}
    For the $n=0$ case, it is clear that if $\Hom_R(M,N)$ is faithful, then so is $N$, since $\Hom_R(M,N)$ embeds inside a finite direct sum of copies of $N$. So we assume that $n\ge 1$.  Let ${\bf x} := x_1,\dots,x_n$ be elements of $R$ which is $N$-regular $($resp., and $R$-regular$)$. We want to show that $N/{\bf x} N$ is $R/{\bf x} R$-faithful. In view of Lemma~\ref{extres}, the sequence ${\bf x}$ is also $\Hom_R(M,N)$-regular, and $\Hom_R(M,N)/{\bf x} \Hom_R(M,N)$ embeds in $\Hom_R(M,N/{\bf x} N)$. Thus, since $\Hom_R(M,N)$ is absolutely $n$-residually faithful, $\Hom_R(M,N)/{\bf x}\Hom_R(M,N)$ is $R/{\bf x} R$-faithful. Combining these, it follows that
    \begin{align*}
        \ann_R(N/{\bf x} N) &\subseteq \ann_R \Hom_R(M,N/{\bf x} N)\\
        &\subseteq \ann_R \left( \Hom_R(M,N)/{\bf x} \Hom_R(M,N) \right) = {\bf x} R.
    \end{align*}
    Thus, $N/{\bf x} N$ is $R/{\bf x} R$-faithful, which is what we wanted to show. 
\end{proof}



Every $n$-semidualizing module is absolutely $(n+1)$-residually faithful.

\begin{proposition}\label{semid}
    Let $n\ge 1$, and $M$ be an $(n-1)$-semidualizing $R$-module $($see {\rm Definition~\ref{defn:n-semidualizing}}$)$. Then, $M$ is absolutely $n$-residually faithful.
\end{proposition}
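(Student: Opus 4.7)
The plan is to deduce absolute $n$-residual faithfulness directly from Lemma~\ref{extres} applied with $N=M$, exploiting the two defining features of an $(n-1)$-semidualizing module: the vanishing $\Ext_R^{1 \le i \le n-1}(M,M)=0$ and the isomorphism $\Hom_R(M,M)\cong R$.

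So fix any $M$-regular sequence $\mathbf{x}:=x_1,\dots,x_n$ in $R$; I want to show that $\ann_R(M/\mathbf{x}M)\subseteq \mathbf{x}R$. By Lemma~\ref{extres}(1)(i) applied with $N=M$, the vanishing $\Ext_R^{1\le i\le n-1}(M,M)=0$ together with the $M$-regularity of $\mathbf{x}$ forces $\mathbf{x}$ to be regular on $\Hom_R(M,M)$. Since $\Hom_R(M,M)\cong R$, this already gives for free that $\mathbf{x}$ is an $R$-regular sequence.

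Next, Lemma~\ref{extres}(1)(ii) yields an embedding
\[
\Hom_R(M,M)/\mathbf{x}\Hom_R(M,M) \;\hookrightarrow\; \Hom_R(M, M/\mathbf{x}M).
\]
Using $\Hom_R(M,M)\cong R$, the left-hand side becomes $R/\mathbf{x}R$. The right-hand side $\Hom_R(M,M/\mathbf{x}M)$ is annihilated by $\ann_R(M/\mathbf{x}M)$, so the submodule $R/\mathbf{x}R$ is annihilated by $\ann_R(M/\mathbf{x}M)$ as well. This is precisely the statement $\ann_R(M/\mathbf{x}M)\subseteq \mathbf{x}R$, which says that $M/\mathbf{x}M$ is $R/\mathbf{x}R$-faithful. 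Since $\mathbf{x}$ was an arbitrary $M$-regular sequence, $M$ is absolutely $n$-residually faithful.

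There is no real obstacle here; the content of the argument is entirely packaged in Lemma~\ref{extres}. The only small care needed is the edge case $n=1$, where the hypothesis $\Ext_R^{1\le i\le 0}(M,M)=0$ is vacuous and the lemma still applies, and noticing that the $R$-regularity of $\mathbf{x}$ does not need to be assumed separately because it is a consequence of $\Hom_R(M,M)\cong R$ being regular on $\mathbf{x}$.
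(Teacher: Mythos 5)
Your argument is correct and is essentially the paper's proof: the paper simply cites Proposition~\ref{resfaith2} (with $N=M$, using that $\Hom_R(M,M)\cong R$ is absolutely $n$-residually faithful), and the proof of that proposition is exactly your computation --- apply Lemma~\ref{extres} to get that $\mathbf{x}$ is $\Hom_R(M,M)$-regular and to get the embedding $\Hom_R(M,M)/\mathbf{x}\Hom_R(M,M)\hookrightarrow \Hom_R(M,M/\mathbf{x}M)$, then compare annihilators. You have merely inlined the special case, and your observation that the $R$-regularity of $\mathbf{x}$ comes for free from $\Hom_R(M,M)\cong R$ is consistent with how the paper handles the ``absolutely'' (as opposed to ``universally'') version.
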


\begin{proof}
    Since $M$ is $(n-1)$-semidualizing, by definition,  $\Hom_R(M,M) \cong R$ and $\Ext_R^{1\le i \le n-1}(M,M) = 0$. So the claim follows from Proposition~\ref{resfaith2}. 
\end{proof}

\begin{proposition}
Let $R$ be a local ring of depth $t$. Let $M$ be an $R$-module satisfying $(\widetilde S_t)$, and $X$ be a $(t-1)$-semidualizing $R$-module. Then, there is no surjection from a direct sum of copies of $\syz(M)$ onto $X$. 
\end{proposition}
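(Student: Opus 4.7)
Proof proposal: The approach is proof by contradiction, combining Proposition~\ref{semid}, the observation in \ref{para:surj-map-on-strong-n-resd}, and Lemma~\ref{syzf}(1). Suppose there exists a surjection $\syz(M)^{\oplus s}\twoheadrightarrow X$ for some $s\ge 1$. The plan is to deduce that $\syz(M)$ must be strongly $t$-residually faithful, which will contradict Lemma~\ref{syzf}(1) since $M$ satisfies $(\widetilde S_t)$.

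First, I would show that $X$ is strongly $t$-residually faithful. Because $X$ is $(t-1)$-semidualizing, Proposition~\ref{semid} immediately gives that $X$ is absolutely $t$-residually faithful. Moreover, evaluating at a minimal generating set of $X$ produces an embedding $R\cong \Hom_R(X,X)\hookrightarrow X^{\oplus \mu(X)}$, hence $\depth_R(X)\ge \depth(R)=t$. Therefore a sequence of length $t$ regular on $R$ can be chosen to be regular on $X$ as well, so $X$ is strongly $t$-residually faithful.

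Next, I would verify that $\syz(M)$ satisfies $(\widetilde S_t)$, in order to apply \ref{para:surj-map-on-strong-n-resd}. The depth lemma applied to $0\to \syz(M)\to R^{\oplus \mu(M)}\to M\to 0$ gives
\[
\depth_{R_{\p}}(\syz(M)_{\p})\ge \min\{\depth(R_{\p}),\, \depth_{R_{\p}}(M_{\p})+1\}
\]
for every $\p\in\Spec(R)$. A short case analysis on whether $\depth(R_{\p})\le t$ or $\depth(R_{\p})>t$, combined with the hypothesis that $M$ satisfies $(\widetilde S_t)$, then yields $\depth_{R_{\p}}(\syz(M)_{\p})\ge \min\{t,\depth(R_{\p})\}$ at every $\p$.

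With these two facts in hand, \ref{para:surj-map-on-strong-n-resd} applied to the surjection $\syz(M)^{\oplus s}\twoheadrightarrow X$ with $n=t$ forces $\syz(M)$ to be strongly $t$-residually faithful. But Lemma~\ref{syzf}(1) precludes this since $M$ satisfies $(\widetilde S_t)$, giving the desired contradiction. I do not anticipate a genuine obstacle here, as the preparatory results in this section package essentially every ingredient we need; the only routine care required is in the $(\widetilde S_t)$-verification at primes where $M_{\p}=0$, which is handled trivially because then $\syz(M)_{\p}$ is a localization of $R^{\oplus \mu(M)}$.
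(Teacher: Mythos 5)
Your overall strategy coincides with the paper's: show $X$ is strongly $t$-residually faithful, check that $\syz(M)$ satisfies $(\widetilde S_t)$, and then play \ref{para:surj-map-on-strong-n-resd} against Lemma~\ref{syzf}.(1). Your verification that $\syz(M)$ satisfies $(\widetilde S_t)$ via the depth lemma is correct (and more explicit than the paper, which leaves this step implicit). However, there is a genuine gap in your justification that $\depth_R(X)\ge t$. From the embedding $R\cong\Hom_R(X,X)\hookrightarrow X^{\oplus \mu(X)}$ one cannot conclude $\depth_R(X)\ge\depth(R)$: an inclusion of modules gives no lower bound on the depth of the ambient module in terms of the depth of the submodule. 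A counterexample of exactly your shape: for a two-dimensional regular local ring $(R,\m)$ one has $\Hom_R(\m,\m)\cong R\hookrightarrow \m^{\oplus 2}$, yet $\depth(\m)=1<2=\depth(R)$. (Here $\m$ is $0$-semidualizing but not $1$-semidualizing, which is precisely the point: the depth bound cannot come from $\Hom_R(X,X)\cong R$ and the evaluation embedding alone.)

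The inequality $\depth_R(X)\ge t$ is true, but it genuinely uses the vanishing $\Ext_R^{1\le i\le t-1}(X,X)=0$. The paper obtains it by citing \cite[Prop.~3.2.(a)]{Se}; alternatively, take a maximal $R$-regular sequence $x_1,\dots,x_t$, note that it is regular on $\Hom_R(X,X)\cong R$, and apply Lemma~\ref{semi} (equivalently, Proposition~\ref{prop:n-1-semidual-Sn}) to conclude it is $X$-regular. With that one step repaired, the remainder of your argument goes through and matches the paper's proof.
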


\begin{proof}
Since $X$ is $(t-1)$-semidualizing, in particular $\Hom_R(X,X)\cong R$, and hence $X$ is $R$-faithful, i.e., absolutely $0$-residually faithful. Moreover, in view of Proposition~\ref{semid}, $X$ is absolutely $t$-residually faithful. Since $X$ is $(t-1)$-semidualizing and $\depth(R) = t$, by \cite[Prop.~3.2.(a)]{Se}, $\depth(X) \ge t$. So $X$ is strongly $t$-residually faithful. As $\syz(M)$ is not strongly $t$-residually faithful by Lemma~\ref{syzf}.(1), the non-existence of any surjection as mentioned in the proposition follows from \ref{para:surj-map-on-strong-n-resd}.
\end{proof}


Next, we provide some criteria, in terms of the notion of residually faithful modules (Definition~\ref{defn:res-faithful-GKL}), for an $\fm$-primary ideal in a CM local ring $(R,\m)$ to be generated by a regular sequence. For this, we first record a lemma.

\begin{lemma}\label{lem:J=x}
    Let $(R,\m)$ be a CM local ring, $M$ be an MCM $R$-module, and $J$ be an $\m$-primary ideal of $R$. Furthermore, assume that $e_R(J,M)=\ell_R(M/JM)$ and $M$ is residually faithful. Then $J$ can be generated by an $R$-regular sequence.
\end{lemma}

\begin{proof}  
    If necessary, in view of Lemma~\ref{lem:JS-J-reg-gen} and  \ref{para:R-S-res-faith}, 
    $R$ can be replaced by $R[X]_{\fm R[X]}$. So we may assume that the residue field of $R$ is infinite. Since $e_R(J, M)=\ell_R(M/JM)$, by Lemma~\ref{lem:JM=xM}, $JM=(\mathbf x )M$ for some $R$-regular sequence ${\mathbf x} = x_1,\ldots,x_d$, where $d=\dim(R)$. Since $M$ is MCM, ${\mathbf x}$ is also regular on $M$. On the other hand, since $R$ is CM, every parameter ideal of $R$ is generated by a maximal $R$-regular (hence $M$-regular) sequence. Thus, since $M$ is residually faithful, by definition, $M$ is strongly $d$-residually faithful. Hence Remark~\ref{remrf}(2) yields that $M$ is universally $d$-residually faithful. So $M/{\bf x}M$ is faithful over $R/{\bf x}R$. Since $JM=({\bf x})M$, the module $M/{\bf x}M$ is killed by $J$. Putting together, $J=({\bf x})$.
\end{proof}

    

Next we show a couple of lemmas in order to prove Proposition~\ref{resfaithul}. For both the lemmas, we need the notion of trace ideals (cf.~\ref{para:trace}).

\begin{lemma}\label{tracereg}
Let $M$ be an $R$-module such that $\Ext^1_R(M,R)=0$. Let $x\in R$ be $R$-regular. Set $ \overline{(-)} := (-) \otimes_R R/(x) $. Then, the following hold true.
\begin{enumerate}[\rm (1)]
    \item  $(\tr_R M)\overline R=\tr_{\overline R}(\overline M)$. 
    \item If $(R,\m)$ is local, $x\in \m$, and $\overline R$ is a direct summand of $\overline M$, then $R$ is a direct summand of $M$. 
\end{enumerate}
\end{lemma}

\begin{proof}
(1) By Lemma~\ref{extres}(1)(iii), we have a natural isomorphism $\overline{\Hom_R(M,R)}\cong \Hom_{\overline R}(\overline M, \overline R)$. Now the claim follows along the same lines as argued in \cite[Lem.~1.5(iii)]{hhs}.  

(2) Since $\overline R$ is a direct summand of $\overline M$, by \cite[Prop.~2.8(iii)]{Lindo}, $\tr_{\overline R}(\overline M)=\overline R$. Hence (1) yields that $(\tr_R M)\overline R=\overline R$, i.e., $\tr_R(M)+Rx=R$. Since $x\in \fm$, it follows that $\tr_R(M)=R$. Therefore, again by \cite[Prop.~2.8(iii)]{Lindo}, $R$ is a direct summand of $M$. 
\end{proof}

The following lemma, whose proof does not depend on any result of \cite{DEL21}, highly generalizes \cite[Lem.~3.2]{DEL21}.
In fact, the case $t=0$ of Lemma~\ref{freesumm} recovers \cite[Lem.~3.2]{DEL21} with a new proof.

\begin{lemma}\label{freesumm}
Let $R$ be a local ring of depth  $t$. Let $M$ be an $R$-module such that $\depth_R(M)\ge t-1$ and $\Ext_R^{1\le i\le t-1}(M,R)=0$. If $R$ is a direct summand of $M^*$, then $R$ is a direct summand of $M$.
\end{lemma}    

\begin{proof}
Let $R$ be a direct summand of $M^*$. Using induction on $t$, we prove that $R$ is a direct summand of $M$. We first consider the case when $t\le 1$. (Note that in this case, there is no $\Ext$ vanishing). We first show that the trace ideal $\tr_R(M)$ contains a non-zero-divisor. If not, then by prime avoidance, $\tr_R(M)\subseteq \p=\ann_R(x)$ for some $\p\in \Ass(R)$, and $0\neq x \in R$. Consequently, $x$ annihilates $\tr_R(M)$, and hence $x$ annihilates $\Hom_R(M, \tr_R(M)) \cong M^*$, contradicting $R$ is a direct suumand of $M^*$. Thus, $\tr_R(M)$ must have positive grade. Then, by \cite[Coro.~3.23]{Lindo}, $\End_R(\tr_R(M))\cong \End_R(\tr_R(M^*))$. Since $R$ is a direct summand of $M^*$, in view of \cite[Prop.~2.8(iii)]{Lindo}, $\tr_R(M^*)=R$. Combining the last two facts, $\End_R(\tr_R(M))\cong R$. Hence, by \cite[Prop.~2.8(vi)]{Lindo}, $R\cong (\tr_R(M))^*$. As $\tr_R(M)$ is an ideal, it is torsion-free. Therefore, since $t\le 1$, in view of Proposition~\ref{trt2}.(1), $\tr_R(M)$ must be free. As $\tr_R(M)$ is an ideal, it follows that $\tr_R(M)=R$. Then \cite[Prop.~2.8(iii)]{Lindo} yields that $R$ is a direct summand of $M$.

Next we assume that $t\ge 2$. Note that $\depth_R(R\oplus M) \ge t-1\ge 1$. So there exists an element $x$ of $R$ that is regular on both $R$ and $M$. Set $\overline{(-)} := (-)\otimes_R R/(x)$. Since $\Ext_R^{1\le i\le t-1}(M,R)=0$, one obtains that $\Ext_{\overline{R}}^{1\le i\le t-2}(\overline{M},\overline{R})=0$, see, e.g., \cite[Lem.~2.5]{Gh19}. Moreover, since $\Ext_R^1(M,R)=0$, by Lemma~\ref{extres}(1)(iii), it follows that $\overline{(M^*)} \cong \Hom_{\overline{R}}(\overline{M},\overline{R})$. Note that $\depth(\overline{R}) = t-1$, $\depth_{\overline{R}}(\overline{M}) = \depth_R(M)-1\ge t-2$, and $\overline{R}$ is a direct summand of $ \Hom_{\overline{R}}(\overline{M},\overline{R})$. Therefore, by the induction hypothesis, $\overline{R}$ is a direct summand of $\overline{M}$. Finally, by Lemma~\ref{tracereg}(2), $R$ is a direct summand of $M$.
\end{proof}

Now we are able to give a criterion for an Ulrich module with respect to an $\fm$-primary ideal to be free using the notion of residually faithful modules. 

\begin{proposition}\label{resfaithul}
Let $(R,\m)$ be a CM local ring of dimension $d$. Let $J$ be an $\m$-primary ideal. Assume that there exist $R$-modules $M$ and $N$ such that $N$ is MCM, $\Ext_R^{1\le i\le d-1}(M,N)=0$, and $\Hom_R(M,N)$ is residually faithful.  Then, $J$ is generated by an $R$-regular sequence if at least one of the following conditions holds:
\begin{enumerate}[\rm (1)]
    \item $e_R(J,N)=\ell_R(N/JN)$.
    \item $N$ is Ulrich with respect to $J$ $(${\rm cf.~Definition~\ref{defn:Ulrich}}$)$.
\end{enumerate}
Moreover, under the hypothesis of $(2)$, $N$ must be free. Furthermore, if both $(2)$ and $\depth_R(M)\ge d-1$ hold, then $R$ is a direct summand of $M$.
\end{proposition}

\begin{proof} 
Note that the hypothesis of (2) contains the hypothesis of (1). Under the hypothesis of (1), by Proposition~\ref{mainul}, we have that $e_R\big(J,\Hom_R(M,N)\big) = \ell_R\big(\Hom_R(M,N)/J\Hom_R(M,N)\big)$. Hence, since $\Hom_R(M,N)$ is residually faithful, Lemma~\ref{lem:J=x} yields that $J$ is generated by an $R$-regular sequence. Under the hypothesis of (2), we additionally have that $N/JN$ is free over $R/J$. In this case, since $J$ is generated by an $R$-regular sequence and $N$ is MCM, it follows that $N$ must be free. Now assume, along with the hypothesis of (2), that $\depth_R(M)\ge d-1$. As $\Hom_R(M,N)$ is residually faithful, $N$ must be a nonzero module. Thus, since $N$ is free, $\Hom_R(M,N) \cong (M^*)^{\oplus n}$ for some $n \ge 1$ and $\Ext_R^{1\le i\le d-1}(M,R)=0$. Moreover, $(M^*)^{\oplus n}$ is residually faithful, i.e., $M^*$ is so. Since $\Ext_R^{1\le i\le d-1}(M,R)=0$, in view of \ref{para:Ext-syz}, it follows that $M^*\cong F\oplus \syz_R^{d+1}(L)$ for some $R$-modules $F$ and $L$ with $F$ free. By Lemma~\ref{syzf}(1), $\syz^{d+1}_R(L)$ is not residually faithful. Hence $F \neq 0$. So $R$ is a direct summand of $M^*$. Finally, by Lemma~\ref{freesumm}, we get that $R$ is a direct summand of $M$.
\end{proof}

As consequences of Proposition~\ref{resfaithul}, we provide characterizations of regular local rings via existence of certain residually faithful module.

\begin{corollary}\label{cor:rlr-resfaith}
    For a CM local ring $(R,\fm)$ of dimension $d$, the following are equivalent.
    \begin{enumerate}[\rm (1)]
        \item $R$ is regular.
        \item $R$ admits an Ulrich residually faithful module.
        \item There exist $R$-modules $M$ and $N$ such that $N$ is Ulrich, $\Ext_R^{1\le i\le d-1}(M,N)=0$, and $\Hom_R(M,N)$ is residually faithful.
    \end{enumerate}
\end{corollary}

\begin{proof}
    (1) $\Rightarrow$ (2): If $R$ is regular, then $M:=R$ is Ulrich and residually faithful as an $R$-module.

    (2) $\Rightarrow$ (3): Suppose that $R$ admits an Ulrich residually faithful module, say $N$. Set $M:=R$. Then the condition (3) is satisfied trivially.

    (3) $\Rightarrow$ (1). Since $N$ is Ulrich, $e_R(\fm,N)=\mu(N) = \ell_R(N/\fm N)$. Therefore, by Proposition~\ref{resfaithul}, the condition (3) implies that $\fm$ is generated by an $R$-regular sequence, i.e., $R$ is regular
\end{proof}

We need the following notion to prove our main result of this section.

\begin{para}\label{para:TD}
    Let $C$ be a semidualizing $R$-module. Following \cite[1.8]{RW}, the Bass class $\mathscr{B}_C$ with respect to $C$ is the collection of all $R$-modules $M$ satisfying
    \begin{enumerate}[\rm (a)]
        \item $\Ext_R^{\ge 1}(C,M)=0=\Tor_{\ge 1}^R(C,\Hom_R(C,M))$, and
        \item The natural map $C \otimes_R \Hom_R(C,M) \longrightarrow M$ is an isomorphism.
    \end{enumerate}
    Consider an $R$-module $X$. If $ \pd_R(\Hom_R(C,X)) < \infty $, then by \cite[1.9.(b) and Thm.~2.8.(a)]{RW}, it follows that $X\in \mathscr{B}_C$, consequently $\Ext_R^{\ge 1}(C, X)=0$ and $C\otimes_R\Hom_R(C,X)\cong X$.
\end{para}

Using the notion of generalized residually faithful modules, we prove the following theorem, which highly generalizes \cite[Cor.~3.7]{DEL21}.

\begin{theorem}\label{thm:pd-Hom-finite-gdim-freeness-criteria}
    Let $R$ be a local ring of depth $t$. Let $C$ be a semidualizing $R$-module $($e.g., $C=R$$)$. Let $M$ and $N$ be $R$-modules such that $\Hom_R(M,N)$ is nonzero,
    \[
        \Ext_R^{1\le i \le t-1}(M,N) = 0 \;\mbox{ and } \;\pd_R\big(\Hom_R(C,\Hom_R(M,N))\big) < \infty.
    \]
    Let $N \cong F \oplus \syz_R(Y)$ for some $R$-modules $Y$ and $F$ each of depth at least $t$ $($e.g., if $N$ is $(t+1)$-torsion-free, or if $\gdim_R(N)=0$$)$. Then, $N\cong F$, $C\cong R$ and $M\cong t(M)\oplus G$ for some nonzero free $R$-module $G$.
\end{theorem}

\begin{proof}
Without loss of generality, we may assume that $R$ is complete. Since $\depth(Y)\ge t$ and $\depth(F)\ge t$, it follows from $ N \cong F \oplus \syz_R(Y) $ that $\depth(N) \ge t$. Thus, since $\Ext_R^{1\le i \le t-1}(M,N)=0$, by Lemma~\ref{extres}.(2), $\depth(\Hom_R(M,N)) \ge \inf\{t, \depth(N)\} = t.$
Since $C$ is semidualizing and $\Hom_R(C,\Hom_R(M,N))$ has finite projective dimension, in view of \ref{para:TD}, one has that $\Ext_R^{\ge 1}(C,\Hom_R(M,N))=0$. Hence, applying Lemma~\ref{extres}.(2) again, it follows that
\begin{equation*}
    \depth\big(\Hom_R(C, \Hom_R(M,N))\big) \ge \inf\{t, \depth(\Hom_R(M,N))\} = t.
\end{equation*}   
Then, by Auslander-Buchsbaum formula, $\Hom_R(C,\Hom_R(M,N))$ is $R$-free. Therefore, by \ref{para:TD},
\begin{equation}\label{Hom-tensor-C}
    \Hom_R(M,N) \cong C \otimes_R \Hom_R(C,\Hom_R(M,N)) \cong C^{\oplus n}
\end{equation}
for some $n \ge 0$. Since $\Hom_R(M,N)\neq 0$, we must have that $n\ge 1$. As $C$ is semidualizing, $\Supp(C) = \Spec(R)$. So, from 
\eqref{Hom-tensor-C}, one derives that $\Supp(M) = \Spec(R)$. If possible, assume that $\syz_R(Y) \neq 0$. Then $\emptyset \neq \Ass (\syz_R(Y)) = \Ass(\syz_R(Y)) \cap \Spec(R) = \Ass (\syz_R(Y)) \cap \Supp(M) = \Ass\big(\Hom_R(M,\syz_R(Y))\big)$ (cf.~\cite[1.2.27]{BH93}). It follows that $\Hom_R(M,\syz_R(Y)) \neq 0$. Note that $\Hom_R(M,\syz_R(Y))$ is a direct summand of $\Hom_R(M,N) \cong C^{\oplus n}$. Being a semidualizing module, $C$ is indecomposable. Combining the last three facts, since $R$ is complete, by Krull-Schmidt theorem, one obtains that $\Hom_R(M,\syz_R(Y)) \cong C^{\oplus s}$ for some $s\ge 1$. Thus, by Proposition~\ref{semid}, $\Hom_R(M,\syz_R(Y))$ is absolutely $t$-residually faithful. So Proposition~\ref{resfaith2} yields that $\syz_R(Y)$ is absolutely $t$-residually faithful, and hence universally $t$-residually faithful. This contradicts Lemma~\ref{syzf}. So we must have that $\syz_R(Y)= 0$. Hence $N\cong F$.

We first notice from \ref{Hom-tensor-C} that $(M^*)^{\oplus m}\cong C^{\oplus n}$ for some $m,n\ge 1$. Since $\depth(R\oplus C)\ge t$, Proposition~\ref{semid} and Remark~\ref{remrf}(1) yield that $C$ is strongly $t$-residually faithful, and hence $(M^*)^{\oplus m}\cong C^{\oplus n}$ is so. Consequently, $M^*$ is strongly $t$-residually faithful. On the other hand, since $\Ext_R^{1\le i\le t-1}(M,R)=0$, in view of \ref{para:Ext-syz}, $M^* \cong H\oplus \syz_R^{t+1}(L)$ for some $R$-modules $H$ and $L$ with $H$ free. By Lemma~\ref{syzf}, $\syz^{t+1}_R(L)$ is not strongly $t$-residually faithful. Therefore $H$ must be a nonzero module. In particular, $R$ is a direct summand of $M^*$. Thus, since $(M^*)^{\oplus m}\cong C^{\oplus n}$, one obtains that $R$ is a direct summand of $C^{\oplus n}$. Hence, by \cite[Lem.~1.2(i)]{LW}, $R$ is a direct summand of $C$. As $C$ is indecomposable, it follows that $C\cong R$. Therefore, from the given hypotheses, $\pd_R(M^*)<\infty$. Finally, Proposition~\ref{trt2} yields that $M\cong t(M)\oplus G$ for some free $R$-module $G$. Then $M^* \cong (t(M))^* \oplus G^* \cong G$. Since $\Hom_R(M,N)\neq 0$ and $N$ is nonzero free, it follows that $G$ is nonzero.

The remark in parenthesis about the particular cases is clear from \ref{para:-n-torsion-free-nth-syz} and \ref{para:G-dim-n-torsion-free}. 
\end{proof}  



\section{Finite projective dimension of Hom and vanishing of Ext}\label{sec:pd-hom}

In this section, we obtain various criteria for a module to be free in terms of vanishing of certain Ext modules and finite projective dimension of certain Hom. We start with the following remark which is often useful in this section.

\begin{remark}\label{22}
	If $M$ and $N$ are nonzero modules such that $\Ext_R^i(M,N)=0$ for all $1 \le i \le \depth(N)$, then $\Hom_R(M,N)$ is nonzero. This follows from the fact that $$\inf\{ n : \Ext_R^n(M,N)\neq 0 \} = \depth(\ann_R(M),N) \le \depth(N)<\infty,$$
	where the second inequality holds true because $\ann_R(M)$ is a proper ideal.
\end{remark}

The theorem below is motivated by Conjecture~\ref{Tachikawa}. It particularly provides a few criteria for a semidualizing module to be free.

\begin{theorem}\label{thm:pd-Hom-Ext-Tr}
Let $M$ and $N$ be nonzero modules over a local ring $R$. Set $t:=\depth(R)$ and $s:=\depth(N)$. Suppose that $\Ext_R^j(M,N)=0$ for all $1\le j\le s$, and $\pd_R(\Hom_R(M,N))<\infty$. Then, $s\le t$. Moreover, $M$ is free if at least one of the following holds true.
\begin{enumerate}[\rm (1)]
    \item $\Ext_R^i(\Tr M,R)=0$ for all $1\le i\le t+1$.   
    \item $\Ext_R^i(M^*,R)=0$ for all $1\le i\le t-1$, and $M$ is reflexive.
    \item $\Ext_R^i(\Tr M,R)=0$ for all $n+2 \le i\le t+1$, the module $M$ satisfies $(\widetilde S_{n+1})$, and locally $M$ has finite G-dimension in co-depth $n$ for some integer $n\ge 0$, {\rm(cf.~\ref{para-Sn} and \ref{para:property-in-codim})}.
    \item $\Ext_R^i(M^*,R)=0$ for all $n \le i\le t-1$, the module $M$ satisfies $(\widetilde S_{n+1})$, and locally $M$ has finite G-dimension in co-depth $n$ for some integer $n\ge 1$.
    \item $R$ is CM, $M$ is MCM and locally free on the punctured spectrum of $R$, and $\Ext_R^{t+1}(\Tr M,R)=0$.  
\end{enumerate}
\end{theorem}  

\begin{proof}
    Since $\Ext_R^j(M,N)=0$ for all $1\le j\le s$, by Remark~\ref{22}, $\Hom_R(M,N)\neq 0$. Moreover, Lemma~\ref{extres} yields that $\depth(\Hom_R(M,N)) \ge s$. Hence $0\le \pd_R(\Hom_R(M,N)) \le t-s$. So $s \le t$.

    (1) Since $ \Ext_R^j(\Tr M, R) = 0 $ for all $1\le j \le t+1 = s+(t-s)+1$, in view of Lemma~\ref{2.3}.(2), it follows that $\Ext_R^j(\Tr M, \Hom_R(M,N))=0$ for all $1\le j \le s+1$. Hence, by Proposition~\ref{propnew}, $M$ is free.
    
    (2) Note that $M$ is reflexive \iff $\Ext_R^i(\Tr M,R)=0$ for both $i=1$ and $2$, see, e.g., \cite[1.4.21]{BH93}. Since $M^*$ and $\Omega^2(\Tr M)$ are stably isomorphic (cf.~\ref{para:Tr-M}), it follows that
    \begin{equation}\label{eqn:Ext-M-star-Tr-M}
        \Ext_R^i(M^*,R) \cong \Ext_R^i(\Omega^2(\Tr M),R) \cong \Ext_R^{i+2}(\Tr M,R) \; \mbox{ for all }i \ge 1.
    \end{equation}
    Thus the conditions in (2) imply that $\Ext_R^{1\le i\le t+1}(\Tr M,R)=0$, and hence by (1), $M$ is free. 

(3) Let $ n \ge 0 $. Since $M$ satisfies $(\widetilde S_{n+1})$, and locally $M$ has finite G-dimension in co-depth $n$, by \cite[Prop.~2.4.(b)]{DS15}, $\Ext_R^{1\le i\le n+1}(\Tr M,R)=0$. Combining with the other vanishing conditions, $\Ext_R^{1\le i\le t+1}(\Tr M,R)=0$. Hence it follows from (1) that $M$ is free.

(4) Let $n \ge 1$. As in (3), one obtains that $\Ext_R^{1\le i\le n+1}(\Tr M,R)=0$. Since $\Ext_R^{n \le i\le t-1}(M^*,R)=0$, in view of \eqref{eqn:Ext-M-star-Tr-M}, $\Ext_R^{n+2 \le i\le t+1}(\Tr M,R)=0$. Thus $\Ext_R^{1 \le i\le t+1}(\Tr M,R)=0$, and hence $M$ is free.

(5) If $t=0$, since $\Ext_R^1(\Tr M,R)=0$, by (1), $M$ is free. So we may assume that $t\ge 1$. Since $R$ is CM, and $M$ is MCM, it is well known that $M_\fp$ is an MCM $R_\fp$-module for every prime ideal $\fp\in\Supp(M)$, see, e.g., \cite[2.1.3.(b)]{BH93}. Thus, in particular, $M$ satisfies $(\widetilde S_t)$. Since $M$ is locally free on the punctured spectrum of a CM local ring $R$, one obtains that $M$ is locally free in co-depth $t-1$. Therefore the condition (3) is satisfied for $n=t-1$. Hence $M$ is free.
\end{proof}



As a corollary of Theorem~\ref{thm:pd-Hom-Ext-Tr}, we obtain the following characterizations of Gorenstein local rings.

\begin{corollary}\label{cor:Gor-rings-Tr-omega}
    Let $(R,\fm)$ be a CM local ring of dimension $d$ with a canonical module $\omega$. Then the following statements are equivalent:
    \begin{enumerate}[\rm (1)]
        \item $R$ is Gorenstein.
        \item $\Ext_R^{1\le i\le d+1}(\Tr(\omega),R)=0$.
        \item $\Ext_R^{1\le i\le d-1}(\omega^*,R)=0$, and $\omega$ is reflexive.
        \item $\Ext_R^{n+2 \le i\le d+1}(\Tr(\omega),R)=0$, and $R$ is locally Gorenstein in codimension $n$ for some $0\le n \le d-1$.
        \item $\Ext_R^{n\le i\le d-1}(\omega^*,R)=0$, and $R$ is locally Gorenstein in codimension $n$ for some $1\le n \le d-1$.
        \item $\Ext_R^{d+1}(\Tr(\omega),R)=0$, and $R$ is locally Gorenstein on $\Spec(R)\smallsetminus \{\fm\}$.
    \end{enumerate}
\end{corollary}

\begin{proof}
    Note that $R$ is Gorenstein \iff $\omega$ is free. So (1) trivially implies all the other conditions. The reverse implications can be deduced from Theorem~\ref{thm:pd-Hom-Ext-Tr} by setting $M=N=\omega$ as $\Hom_R(\omega,\omega)\cong R$, $\Ext_R^j(\omega,\omega)=0$ for all $j\ge 1$, and $\depth(\omega)=\depth(R)=d$. Note that for $\fp\in\Spec(R)$, if $R_\fp$ is Gorenstein, then $\omega_\fp \cong R_\fp$.
\end{proof}

\begin{remark}
We note here that the main non-trivial implication (2) $\Rightarrow$ (1) of
Corollary~\ref{cor:Gor-rings-Tr-omega} can also be proved by more classical methods as follows: The hypothesis of $(2)$ implies that $\omega$ is $(d+1)$-torsion-free, hence a $(d+1)$-st syzygy (cf.~\ref{para:-n-torsion-free-nth-syz}). So we have an exact sequence $0\to \omega \to F \to L\to 0$, where $F$ is free and $L$ is a $d$-th syzygy module, hence $L$ is MCM, which implies that $\Ext^1_R(L,\omega)=0$. Consequently, the exact sequence splits, giving $\omega$ is free, i.e., $R$ is Gorenstein.  
\end{remark}

\begin{remark}
The equivalences (1) $\Leftrightarrow$ (2) $\Leftrightarrow$ (6) in Corollary~\ref{cor:Gor-rings-Tr-omega} recover \cite[Cor.~5.19 and 5.20]{HM22}, while the implication (5) $\Rightarrow$ (1) strengthens \cite[Cor.~5.22.(iv)$\Rightarrow$(i)]{HM22} as we do not assume the vanishing of $\Ext_R^d(\omega^*,R)$.  
\end{remark}

Now we prove the 2nd main result of this section. Note that Theorem~\ref{thm:Hom-pd-id}.\eqref{Hom-Ext-revisited-thm-3.8} considerably strengthens \cite[Thm.~3.8]{DEL21}.

\begin{theorem}\label{thm:Hom-pd-id}
Let $(R,\m)$ be a local ring of depth $t$. Let $M$ and $N$ be $R$-modules such that $\Hom_R(M,N)$ is nonzero and $ \Ext_R^{1\le i \le t-1}(M,N) = 0 $. Suppose $N \cong F \oplus \syz_R(Y)$ for some $R$-module $Y$ of depth at least $t$ and free $R$-module $F$ $($e.g., when $N$ is $(t+1)$-torsion-free, or $\gdim_R(N)=0$$)$.
\begin{enumerate}[\rm (1)]
    \item \label{Hom-pd-N-special}
    If $\pd_R(\Hom_R(M,N)) < \infty $, then $N\cong F$ and $M\cong t(M)\oplus R^{\oplus s}$ for some $s\ge 1$.
    \item \label{Hom-Ext-revisited-thm-3.8}
    If $ \Ext_R^t(M,R) = 0 $ and $ \pd_R(\Hom_R(M,N)) < \infty $, then both $M$ and $N$ are free.
    \item \label{id-hom-3}
    If $\id_R(\Hom_R(M,N)) < \infty $, then $R$ is Gorenstein, $N \cong F$ and $M \cong \Gamma_\fm(M) \oplus R^{\oplus r}  \cong t(M)\oplus R^{\oplus s}$ for some $r\ge 0$ and $s\ge 1$.
    \item \label{id-hom-4}
    If $ \Ext_R^t(M,R) = 0 $ and $ \id_R(\Hom_R(M,N)) < \infty $, then both $M$ and $N$ are free.
\end{enumerate}
\end{theorem}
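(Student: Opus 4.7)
The plan is to handle parts (1)--(2) via Theorem~\ref{thm:pd-Hom-finite-gdim-freeness-criteria} with $C=R$ together with Proposition~\ref{trt2}, and parts (3)--(4) by first reducing to the case $N\cong F$ using Theorem~\ref{thm:pd-Hom-finite-gdim-freeness-criteria} applied with $C=\omega$ (the canonical module after completion), and then invoking \cite[Thm.~2.3]{GT21} in the same way as in the proof of Theorem~\ref{injimprov}.

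For (1), the identity $\pd_R(\Hom_R(R,\Hom_R(M,N)))=\pd_R(\Hom_R(M,N))<\infty$ lets Theorem~\ref{thm:pd-Hom-finite-gdim-freeness-criteria} conclude $N\cong F$. Then $\Hom_R(M,N)\cong (M^*)^{\oplus \rank F}$, so $\pd_R(M^*)<\infty$, and since $N$ is free the given Ext-vanishing transfers to $\Ext_R^{1\le i\le t-1}(M,R)=0$. Proposition~\ref{trt2}(1) yields $M\cong t(M)\oplus G$ with $G$ free. Torsion-freeness of $F$ gives $\Hom_R(t(M),F)=0$, so $\Hom_R(G,F)\cong \Hom_R(M,N)\ne 0$ and hence $G\cong R^{\oplus s}$ with $s\ge 1$. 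For (2), the extra hypothesis $\Ext_R^t(M,R)=0$ extends the vanishing to $\Ext_R^{1\le i\le t}(M,R)=0$, and Proposition~\ref{trt2}(2) makes $M$ free; $N$ is already free.

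For (3), Bass's theorem gives $R$ Cohen--Macaulay (since $\Hom_R(M,N)$ is a nonzero module of finite injective dimension). Passing to the completion, $R$ admits a canonical module $\omega$; finite injective dimension of $\Hom_R(M,N)$ places it in the Bass class $\mathscr{B}_\omega$, so $\pd_R(\Hom_R(\omega,\Hom_R(M,N)))<\infty$. Theorem~\ref{thm:pd-Hom-finite-gdim-freeness-criteria} with $C=\omega$ then produces $N\cong F$. With $N\cong F$ in hand, the hypotheses of \cite[Thm.~2.3]{GT21} are satisfied (exactly as in the proof of Theorem~\ref{injimprov}), producing $\id_R(N)<\infty$ and $M\cong \Gamma_{\fm}(M)\oplus R^{\oplus r}$. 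The nonzero free summand of $N$ with finite injective dimension forces $\id_R(R)<\infty$, i.e., $R$ is Gorenstein. For $t\ge 1$, every element of $\Gamma_{\fm}(M)$ is killed by a power of a regular element in $\fm$, so $t(M)=\Gamma_{\fm}(M)$ in the above decomposition; nonzero-ness of $\Hom_R(M,N)$ together with $\Hom_R(\Gamma_{\fm}(M),F)=0$ (as $F$ has positive depth) forces $r\ge 1$. The Artinian case $t=0$ is handled by the Matlis-duality argument from Case~I in the proof of Theorem~\ref{injimprov}.

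Finally, for (4), the decomposition from (3) combined with $\Ext_R^t(M,R)=0$ gives $\Ext_R^t(\Gamma_\fm(M),R)=0$. Since $R$ is Gorenstein of dimension $t$ and $\Gamma_\fm(M)$ has finite length, a short induction on composition length (using $\Ext_R^{t+1}(-,R)=0$ and $\Ext_R^t(k,R)\ne 0$) forces $\Gamma_\fm(M)=0$, so $M$ is free; and $N$ is already free. The main subtlety I anticipate is verifying the exact form of the hypotheses of \cite[Thm.~2.3]{GT21} under our weaker Ext-vanishing range $\Ext_R^{1\le i\le t-1}(M,N)=0$, and cleanly handling the Artinian corner of part~(3).
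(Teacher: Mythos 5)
Your proposal is correct, and in parts (1), (2) and (4) it follows the paper's argument essentially verbatim: reduce to $N\cong F$ via Theorem~\ref{thm:pd-Hom-finite-gdim-freeness-criteria} with $C=R$, transfer the Ext-vanishing and the finiteness of projective dimension from $\Hom_R(M,N)\cong (M^*)^{\oplus \operatorname{rank} F}$ to $M^*$, and invoke Proposition~\ref{trt2}; your observation that $\Hom_R(t(M),F)=0$ forces $s\ge 1$ is the same point the paper makes by noting $(t(M))^*=0$ and $M^*\cong R^{\oplus s}\neq 0$. The one genuine divergence is in part (3). After obtaining $N\cong F$ from the Bass class of $\omega$ (identical to the paper), you apply \cite[Thm.~2.3]{GT21} to $\Hom_R(M,N)$ exactly as in the proof of Theorem~\ref{injimprov} and then extract Gorensteinness from $\id_R(F)<\infty$, whereas the paper applies \cite[Cor.~2.10.(1)]{GT21} to $M^*$, which delivers Gorensteinness and the decomposition $M\cong\Gamma_\fm(M)\oplus R^{\oplus r}$ in one stroke and with no Artinian corner case. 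Similarly, you identify $t(M)=\Gamma_\fm(M)$ directly from that decomposition when $t\ge 1$ (valid, since $\Gamma_\fm(M)$ is then a torsion module and $R^{\oplus r}$ is torsion-free, so the torsion submodule of the direct sum is exactly $\Gamma_\fm(M)$), while the paper reroutes through part (1) using that over a Gorenstein ring finite injective dimension of $\Hom_R(M,N)$ implies finite projective dimension. Both routes are sound; the subtlety you flag about the hypotheses of \cite[Thm.~2.3]{GT21} is harmless, since your situation ($\depth N=t$, $\Hom_R(M,N)\neq 0$, $\Ext_R^{1\le i\le t-1}(M,N)=0$, $\id_R(\Hom_R(M,N))<\infty$) matches exactly the configuration in which the paper itself invokes that theorem in Theorem~\ref{injimprov}. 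The paper's version of (3) is marginally cleaner for avoiding the case split on $t$; yours has the small bonus of making explicit that $r=s$ when $t\ge 1$.
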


\begin{proof}
    (1) With $C=R$, the claim follows from Theorem\ref{thm:pd-Hom-finite-gdim-freeness-criteria}.

    (2) In view of (1), $N\cong F$ and $M\cong t(M)\oplus R^{\oplus s}$ for some $s\ge 1$. Hence, from the given hypotheses, one obtains that $\Ext_R^{1\le i \le t}(t(M),R)=0$. Note that $\Hom_R(t(M),R)=0$. Therefore, by Remark~\ref{22}, $t(M)$ must be zero. Thus both $M$ and $N$ are free.
    
    (3) Since $\Hom_R(M,N)$ is nonzero and of finite injective dimension, by Bass' conjecture, $R$ is CM. First we prove that $N \cong F$. For that, without loss of generality, we may pass to the completion, and assume that $R$ admits a canonical module $\omega$. As $\Hom_R(M,N)$ has finite injective dimension, $\Hom_R(\omega, \Hom_R(M,N))$ has finite projective dimension, see, e.g., \cite[9.6.5]{BH93}. Therefore, since $\omega$ is semidualizing, by virtue of Theorem\ref{thm:pd-Hom-finite-gdim-freeness-criteria}, $N \cong F$ and $\omega\cong R$. In particular, $R$ is Gorenstein. Moreover, it follows that $M^*$ is nonzero, $\Ext_R^{1\le i \le t-1}(M,R)=0$ and $\id_R(M^*)$ is finite. Hence, by \cite[Cor.~2.10.(1)]{GT21}, $M \cong \Gamma_\fm(M) \oplus R^{\oplus r}$ for some $r\ge 0$. Since $R$ is Gorenstein, $\Hom_R(M,N)$ has finite projective dimension as well. So the other isomorphism follows from (1).

    (4) In view of (3), $N \cong F$ and $M \cong t(M)\oplus R^{\oplus s}$ for some $s\ge 1$. Similar arguments as in the proof of (2) yield that $M$ is free as well.
\end{proof}  

We have a number of consequences of Theorem~\ref{thm:Hom-pd-id}.  

\begin{corollary}\label{cor:Hom-pd-Ext-vanishing-N-special}
Let $R$ be a local ring of depth $t$. Let $M$ and $N$ be nonzero $R$-modules such that $\pd_R(\Hom_R(M,N))<\infty$ or $\id_R(\Hom_R(M,N))<\infty$. Also assume that $\Ext_R^i(M,N)=0$ for all $1 \le i \le \depth(N)$. Furthermore, let $N \cong F \oplus \syz_R(Y)$ for some $R$-module $Y$ of depth at least $t$ and free $R$-module $F$ $($e.g., when $N$ is $(t+1)$-torsion-free, or $\gdim_R(N)=0$$)$. Then, both $M$ and $N$ are free.
\end{corollary}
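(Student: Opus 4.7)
The plan is to apply Theorem \ref{thm:Hom-pd-id} twice in succession: first to force $N$ to be free, and then, leveraging this newly acquired freeness of $N$, to upgrade the Ext-vanishing hypothesis enough to conclude that $M$ is also free.

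For the first application, I would begin by verifying that $\depth_R(N) \ge t$. The free summand $F$ contributes $\depth \ge t$ trivially (recall $\depth(R) = t$), while the depth lemma applied to $0 \to \syz_R(Y) \to R^{\mu_R(Y)} \to Y \to 0$, together with $\depth_R(Y) \ge t$, gives $\depth_R(\syz_R(Y)) \ge \min\{t,\, \depth_R(Y)+1\} = t$. Consequently the hypothesis $\Ext_R^{1\le i\le \depth(N)}(M,N) = 0$ implies, in particular, the weaker vanishing $\Ext_R^{1\le i\le t-1}(M,N) = 0$. Therefore Theorem \ref{thm:Hom-pd-id}.(1) (in the $\pd$ case) or Theorem \ref{thm:Hom-pd-id}.(3) (in the $\id$ case) applies and yields $N \cong F$ together with $M \cong t(M) \oplus R^{\oplus s}$ for some $s \ge 1$.

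For the second application, I observe that since $N \cong F$ is now a non-zero free $R$-module, one has $N \cong R^{\oplus k}$ for some $k \ge 1$, whence $\depth_R(N) = t$. The hypothesis therefore delivers $\Ext_R^{1\le i\le t}(M,N) = 0$, which, because $N$ is a non-zero direct sum of copies of $R$, becomes $\Ext_R^{1\le i\le t}(M,R) = 0$. In particular $\Ext_R^t(M,R) = 0$, so Theorem \ref{thm:Hom-pd-id}.(2) or (4) now applies and gives that both $M$ and $N$ are free. (Alternatively, one could finish by hand: writing $M \cong t(M) \oplus R^{\oplus s}$, the vanishing $\Ext_R^{1\le i\le t}(t(M),R) = 0$ combined with $\Hom_R(t(M),R) = 0$ contradicts Remark \ref{22} unless $t(M) = 0$.)

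The only mild subtlety is spotting the bootstrap: the prescribed decomposition of $N$ automatically forces $\depth_R(N) \ge t$, so the weaker Ext hypothesis is already strong enough to invoke Theorem \ref{thm:Hom-pd-id}.(1)/(3), and the freeness of $N$ produced by that theorem is precisely what feeds back as the extra vanishing $\Ext_R^t(M,R) = 0$ required to activate parts (2)/(4). Beyond this observation I do not anticipate any technical obstacle.
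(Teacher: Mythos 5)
Your argument is correct and is essentially the paper's own proof: establish $\depth_R(N)\ge t$ from the prescribed decomposition, apply Theorem~\ref{thm:Hom-pd-id}.(1)/(3) to conclude $N\cong F$ is free, and then feed the resulting vanishing $\Ext_R^t(M,R)=0$ back into Theorem~\ref{thm:Hom-pd-id}.(2)/(4). The only step you leave implicit is verifying the hypothesis $\Hom_R(M,N)\neq 0$ before the first application of the theorem; this is exactly what Remark~\ref{22} (which you invoke later for the alternative finish) provides, and it is the first line of the paper's proof.
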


\begin{proof}
In view of Remark~\ref{22}, $\Hom_R(M,N)$ must be nonzero. Since $\depth(N) \ge t$, by the given hypotheses, $\Ext_R^i(M,N)=0$ for all $1 \le i \le t$. So Theorem~\ref{thm:Hom-pd-id}.\eqref{Hom-pd-N-special} and \eqref{id-hom-3} yield that $N$ is free. Hence, by Theorem~\ref{thm:Hom-pd-id}.\eqref{Hom-Ext-revisited-thm-3.8} and \eqref{id-hom-4}, $M$ is free as well.
\end{proof}

As another consequence of Theorem~\ref{thm:Hom-pd-id}, we also confirm Conjecture~\ref{ARC} under the condition that both $\gdim_R(M)$ and $\pd_R(\Hom_R(M,M))$ are finite.

\begin{corollary}\label{cor:pd-Hom-G-dim-M-finite}
	Let $M$ be a module over a local ring $R$ such that both $\gdim_R(M)$ and $\pd_R(\Hom_R(M,M))$ are finite. Set $t:=\depth(R)$. Assume that
    \[
        \Ext_R^i(M,R)=\Ext_R^j(M,M)=0 \mbox{ for all } 1 \le i \le t \mbox{ and } 1 \le j \le t-1.
    \]
    Then $M$ is free.
\end{corollary}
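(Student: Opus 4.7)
The plan is to reduce this to Theorem~\ref{thm:Hom-pd-id}\eqref{Hom-Ext-revisited-thm-3.8} applied with $N = M$. The hypotheses already supply the vanishing $\Ext_R^{1 \le j \le t-1}(M,M) = 0$, the vanishing $\Ext_R^t(M,R) = 0$, and the finiteness of $\pd_R(\Hom_R(M,M))$. Moreover, $M \neq 0$ (otherwise there is nothing to prove), so $\Hom_R(M,M) \neq 0$. The only piece left to verify is the syzygy-type decomposition of $N = M$, and the parenthetical remark in Theorem~\ref{thm:Hom-pd-id} tells us it suffices to show $\gdim_R(M) = 0$.

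To establish $\gdim_R(M) = 0$, I would combine two standard results about G-dimension. First, since $\gdim_R(M) < \infty$, the Auslander--Bridger formula gives
\[
    \gdim_R(M) + \depth_R(M) \;=\; \depth(R) \;=\; t,
\]
so in particular $\gdim_R(M) \le t$. Second, for any module of finite G-dimension one has the characterization
\[
    \gdim_R(M) \;=\; \sup\{\, i \ge 0 : \Ext_R^i(M,R) \neq 0 \,\}.
\]
Combining this characterization with the hypothesis $\Ext_R^{1 \le i \le t}(M,R) = 0$ and the bound $\gdim_R(M) \le t$, we conclude $\gdim_R(M) = 0$, i.e., $M$ is totally reflexive.

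With this in hand, Theorem~\ref{thm:Hom-pd-id}\eqref{Hom-Ext-revisited-thm-3.8} applies verbatim with $N = M$ and yields that $M$ is free. The argument is essentially a one-step deduction, so there is no substantive obstacle; the only thing to be slightly careful about is citing the two standard facts about G-dimension correctly (Auslander--Bridger's formula and the Ext-vanishing characterization of $\gdim$), both of which can be found in Christensen's monograph on Gorenstein dimensions or in \cite{AB69}.
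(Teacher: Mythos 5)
Your proposal is correct and follows essentially the same route as the paper: the paper's proof likewise deduces $\gdim_R(M)=0$ from the finiteness of $\gdim_R(M)$ together with $\Ext_R^{1\le i\le t}(M,R)=0$, and then invokes Theorem~\ref{thm:Hom-pd-id}.\eqref{Hom-Ext-revisited-thm-3.8} with $N=M$. You merely spell out the two standard G-dimension facts (Auslander--Bridger and the Ext-vanishing characterization) that the paper leaves implicit.
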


\begin{proof}
	Since $\gdim_R(M)$ is finite, and $\Ext_R^i(M,R)=0$ for all $1 \le i \le t$, it follows that $\gdim_R(M) = 0$. Therefore, by virtue of Theorem~\ref{thm:Hom-pd-id}.\eqref{Hom-Ext-revisited-thm-3.8}, $M$ is free.
\end{proof}

Localizing at each prime ideal and using Proposition~\ref{trt2}.(2) and Corollary~\ref{cor:pd-Hom-G-dim-M-finite}, we immediately obtain the following consequence.

\begin{corollary}\label{cor:ARC-pd-Hom}
	The Auslander-Reiten conjecture holds true for a module $M$ over a commutative Noetherian ring $R$ in each of the following cases:
	\begin{enumerate}[\rm (1)]
		\item $ \pd_R\left( \Hom_R(M,R) \right) < \infty $.
		\item $ \pd_R\left( \Hom_R(M,M) \right) < \infty $ and $\gdim_R(M) < \infty$.
	\end{enumerate}
\end{corollary}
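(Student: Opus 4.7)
The plan is to reduce both parts to the local case via localization and then invoke results already established. Since $M$ is finitely generated, it is projective if and only if $M_{\mathfrak{p}}$ is free over $R_{\mathfrak{p}}$ for every $\mathfrak{p}\in\Spec(R)$, so it suffices to prove the freeness of $M_{\mathfrak{p}}$ under the hypotheses of the Auslander--Reiten conjecture for each $\mathfrak{p}$.

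First I would fix $\mathfrak{p}\in\Spec(R)$ and observe that the vanishing hypotheses $\Ext_R^i(M,R)=0=\Ext_R^j(M,M)=0$ for all $i,j\ge 1$ localize to $\Ext_{R_{\mathfrak{p}}}^i(M_{\mathfrak{p}},R_{\mathfrak{p}})=0=\Ext_{R_{\mathfrak{p}}}^j(M_{\mathfrak{p}},M_{\mathfrak{p}})$. Likewise, finiteness of $\pd$ and $\gdim$ is preserved under localization, and $\Hom$ commutes with localization for finitely generated modules over a Noetherian ring, so $\pd_{R_{\mathfrak{p}}}(\Hom_{R_{\mathfrak{p}}}(M_{\mathfrak{p}},R_{\mathfrak{p}}))<\infty$ in case (1), while $\pd_{R_{\mathfrak{p}}}(\Hom_{R_{\mathfrak{p}}}(M_{\mathfrak{p}},M_{\mathfrak{p}}))<\infty$ and $\gdim_{R_{\mathfrak{p}}}(M_{\mathfrak{p}})<\infty$ in case (2).

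For case (1), set $t_{\mathfrak{p}}:=\depth(R_{\mathfrak{p}})$. Since $\Ext_{R_{\mathfrak{p}}}^{1\le i\le t_{\mathfrak{p}}}(M_{\mathfrak{p}},R_{\mathfrak{p}})=0$ and $\pd_{R_{\mathfrak{p}}}(M_{\mathfrak{p}}^{*})<\infty$, Proposition~\ref{trt2}.(2) applies and gives that $M_{\mathfrak{p}}$ is free. For case (2), the same vanishing on the $R_{\mathfrak{p}}$ slot together with $\Ext_{R_{\mathfrak{p}}}^{1\le j\le t_{\mathfrak{p}}-1}(M_{\mathfrak{p}},M_{\mathfrak{p}})=0$ and the finiteness of $\gdim_{R_{\mathfrak{p}}}(M_{\mathfrak{p}})$ and of $\pd_{R_{\mathfrak{p}}}(\Hom_{R_{\mathfrak{p}}}(M_{\mathfrak{p}},M_{\mathfrak{p}}))$ put us exactly in the situation of Corollary~\ref{cor:pd-Hom-G-dim-M-finite}, which again concludes that $M_{\mathfrak{p}}$ is free.

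Since $\mathfrak{p}$ was arbitrary, $M$ is locally free of finite rank, hence projective, completing the proof. There is no substantive obstacle here: the work has all been absorbed into Proposition~\ref{trt2} and Corollary~\ref{cor:pd-Hom-G-dim-M-finite}, and the only thing to verify is that the hypotheses descend cleanly under localization, which is standard for finitely generated modules over Noetherian rings.
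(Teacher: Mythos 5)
Your proposal is correct and matches the paper's argument exactly: the paper likewise obtains the corollary by localizing at each prime and invoking Proposition~\ref{trt2}.(2) for case (1) and Corollary~\ref{cor:pd-Hom-G-dim-M-finite} for case (2). The localization checks you spell out (Ext vanishing, finiteness of $\pd$ and $\gdim$, and compatibility of $\Hom$ with localization for finitely generated modules) are precisely what the paper leaves implicit.
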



\section{Applications to $n$-semidualizing modules and divisor class group}\label{clgroup}
   
In this section, we provide a complete answer to Question \ref{ques1:Tony-Se} by proving a much more general statement. Moreover, under mild additional hypothesis, we affirmatively answer  Question~\ref{ques2:Tony-Se}. Furthermore, for an integral domain satisfying $(S_3)$ and $(R_2)$, we show that the divisor class group coincides with $1$-semidualizing modules (Theorem~\ref{thm:S1-Cl}.(2)); since this class of rings contains all determinantal rings with coefficients in a regular local ring, our result (Corollary \ref{cor:on-deter-rings}) highly generalizes \cite[Thm.~4.13]{Se}, the main result in \cite{Se}. Recall that the set of all isomorphism classes of $n$-semidualizing modules of $R$ is denoted by $\fS^n_0(R)$, cf.~\ref{defn:n-semidualizing}.

\begin{para}\label{para:class-group}\cite[pp.~261-262]{Sa07}
    Let $R$ be a normal domain. The divisor class group of $R$, denoted $\cl(R)$, is defined to be the set of all isomorphism classes $[M]$ of reflexive $R$-modules $M$ of rank $1$. It forms an abelian group with the group operations
    \begin{equation}\label{eqn:Cl-gp-operations}
        [M]+[N] = [(M\otimes_R N)^{**}] \; \mbox{ and } \; [M]-[N] = [\Hom_R(N,M)].
    \end{equation}
    Note that the identity element of $\cl(R)$ is $[R]$.
\end{para}

\begin{para}\label{para:Cl-subset-S0}
    Let $R$ be a normal domain. Then every $0$-semidualizing module has rank $1$ (here $R$ is not required to be normal). Conversely, every module of rank $1$ is $0$-semidualizing, see \cite[Lem.~3.4]{Se}. Thus $\cl(R) \subseteq \fS^0_0(R)$.
\end{para}

\begin{para}\label{para:S1-subset-cl-R}
    It is shown in \cite[Prop.~3.6]{Se} that if $R$ is a normal domain, then we have a set inclusion $\fS_0^1(R) \subseteq \cl(R)$. Both the sets are identical for certain Gorenstein determinantal rings over a field, see \cite[Thm.~4.13]{Se}. However, there are Gorenstein normal domains for which $\fS_0^1(R) \neq \cl(R)$, due to \cite[Ex.~5.1]{Se}.
\end{para}   

The following lemma, which gives kind of a converse to Lemma \ref{extres} when $M=N$, is proved in \cite[Prop.~2.2.(a)]{Se} under the additional condition that $\Hom_R(M,M)\cong R$.   
  
\begin{lemma}\label{semi}
Let $M$ be a module over a local ring $R$ such that $\Ext^{1\le i\le n-1}_R(M,M)=0$ for some integer $n\ge 1$. If a sequence $x_1,\dots, x_n$ is $R$-regular and $\Hom_R(M,M)$-regular, then it is also $M$-regular.
\end{lemma}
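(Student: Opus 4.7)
The plan is to induct on $n$. For the base case $n = 1$, the Ext vanishing hypothesis is vacuous, so only the $R$-regularity and $\Hom_R(M,M)$-regularity of $x_1$ are at play. Setting $K := (0 :_M x_1)$, I would analyze the four-term exact sequence
\[
0 \to K \to M \xrightarrow{x_1} M \to M/x_1 M \to 0
\]
by breaking it into $0 \to K \to M \to x_1 M \to 0$ and $0 \to x_1 M \to M \to M/x_1 M \to 0$, applying $\Hom_R(M, -)$, and observing that the induced composition $\Hom_R(M, M) \to \Hom_R(M, x_1 M) \hookrightarrow \Hom_R(M, M)$ is multiplication by $x_1$. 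Since the latter is injective by hypothesis, $\Hom_R(M, K) = 0$. The finishing move is to invoke the standard formula $\Ass_R \Hom_R(M, K) = \Supp M \cap \Ass K$; combined with $K \subseteq M$ (which forces $\Ass K \subseteq \Supp M$), this yields $\Ass K = \emptyset$, hence $K = 0$, so $x_1$ is $M$-regular.

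For the inductive step $n \ge 2$, the base case yields that $x_1$ is $M$-regular. Setting $\oR := R/x_1 R$, $\oM := M/x_1 M$, and $\mathbf{x}' := x_2, \ldots, x_n$, the plan is to verify the hypotheses of the lemma for $\oM$ over $\oR$ with the sequence $\mathbf{x}'$ and then invoke the induction hypothesis. Specifically, I would check:
\begin{enumerate}[\rm(i)]
    \item $\mathbf{x}'$ is $\oR$-regular, which is immediate from the $R$-regularity of $\mathbf{x}$.
    \item $\mathbf{x}'$ is $\Hom_{\oR}(\oM, \oM)$-regular: applying $\Hom_R(M, -)$ to $0 \to M \xrightarrow{x_1} M \to \oM \to 0$ and using $\Ext^1_R(M, M) = 0$ gives $\Hom_R(M, M)/x_1 \Hom_R(M, M) \cong \Hom_R(M, \oM) \cong \Hom_{\oR}(\oM, \oM)$, transferring the $\Hom_R(M,M)$-regularity of $\mathbf{x}'$ to the quotient.
    \item $\Ext^{1 \le i \le n-2}_{\oR}(\oM, \oM) = 0$: the same long exact sequence combined with $\Ext^{1 \le i \le n-1}_R(M, M) = 0$ forces $\Ext^{1 \le i \le n-2}_R(M, \oM) = 0$, and the change-of-rings isomorphism $\Ext^i_R(M, \oM) \cong \Ext^i_{\oR}(\oM, \oM)$ (valid since $x_1$ is $M$-regular and annihilates $\oM$) completes the check.
\end{enumerate}
The induction hypothesis then produces the $\oM$-regularity of $\mathbf{x}'$, which together with the $M$-regularity of $x_1$ yields the $M$-regularity of $\mathbf{x}$.

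The main obstacle I anticipate is the base case, where one has to exploit the $\Hom_R(M,M)$-regularity of $x_1$ to control the torsion submodule $K = (0:_M x_1)$; this cannot be bypassed by homological-dimension estimates and hinges on the factoring of multiplication by $x_1$ through $\Hom_R(M, x_1 M)$ and the $\Ass$-formula for $\Hom$. By contrast, the inductive step is a routine change-of-rings reduction.
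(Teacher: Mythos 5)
Your proof is correct and follows essentially the same route as the paper: induction on $n$, with the inductive step being the identical change-of-rings reduction via $\Hom_R(M,M)/x_1\Hom_R(M,M)\cong\Hom_{R/x_1R}(M/x_1M,M/x_1M)$ and $\Ext^i_R(M,M/x_1M)\cong\Ext^i_{R/x_1R}(M/x_1M,M/x_1M)$. The only difference is cosmetic: for the base case the paper simply observes $\Ass_R(\Hom_R(M,M))=\Supp(M)\cap\Ass(M)=\Ass(M)$, so the zero-divisors of $M$ and of $\Hom_R(M,M)$ coincide --- a one-line shortcut for your argument via $K=(0:_M x_1)$ and $\Hom_R(M,K)=0$, both resting on the same $\Ass$-formula for $\Hom$.
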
     

\begin{proof}
We prove this by induction on $n$. For $n=1$, we have no Ext vanishing. Since $\Ass_R(\Hom_R(M,M))=\Ass_R(M)$, the set of zero-divisors of $M$ and $\Hom_R(M,M)$ coincide, which proves the $n=1$ case.  Now assume that $n\ge 2$, and $x_1,\dots,x_n$ is a regular sequence over both $R$ and $\Hom_R(M,M)$. By the base case, $x_1$ is $M$-regular, so we have an exact sequence $0\to M \xrightarrow{x_1\cdot} M \to M/x_1 M \to 0$. Applying $\Hom_R(M,-)$ to this sequence, since $\Ext^{1\le i\le n-1}_R(M,M)=0$ (particularly, $\Ext^1_R(M,M)=0$ as $n-1\ge 1$), we get another exact sequence  
\begin{equation}\label{eqn:ses-hom}
    0\to \Hom_R(M,M) \xrightarrow{x_1\cdot} \Hom_R(M,M)\to \Hom_R(M,M/x_1 M)\to 0
\end{equation}
and $\Ext_R^{1\le i \le n-2}(M,M/x_1M)=0$. Hence, in view of \cite[pp.~140, Lem.~2.(ii)]{Mat86}, one obtains that $\Ext_{R/x_1R}^{1\le i \le n-2}(M/x_1M,M/x_1M)=0$ and
$$\dfrac{\Hom_R(M,M)}{x_1\Hom_R(M,M)} \cong \Hom_R(M,M/x_1 M)\cong \Hom_{R/x_1R}(M/x_1M,M/x_1M).$$
Therefore, since $x_2,\dots,x_n$ is regular over both $R/x_1R$ and $$\Hom_R(M,M)/x_1\Hom_R(M,M)\cong \Hom_{R/x_1R}(M/x_1M,M/x_1M),$$ by induction hypothesis, it is also regular on $M/x_1 M$, hence $x_1,\dots,x_n$ is an $M$-regular sequence. 
\end{proof}     

Using Lemma~\ref{semi}, we show that over a local ring $R$, an $(n-1)$-semidualizing module has depth at least $\inf \{n, \depth R\}$.

\begin{proposition}\label{prop:n-1-semidual-Sn}
Let $R$ be a local ring, and $M \neq 0$ be an $R$-module. Let $n\ge 1$ be an integer such that $\Ext^{1\le i\le n-1}_R(M,M)=0$, and $\pd_R(\Hom_R(M,M))<\infty$. Then
$$\depth_R(M) \ge \inf \{n, \depth_R(\Hom_R(M,M))\}.$$
\end{proposition}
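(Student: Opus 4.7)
The plan is to induct on $n$, producing an $M$-regular sequence of length $s := \inf\{n,\, \depth_R(\Hom_R(M,M))\}$. Since $\pd_R(\Hom_R(M,M)) < \infty$ and $\Hom_R(M,M) \neq 0$ (the identity on the nonzero $M$ gives a nonzero endomorphism), the Auslander--Buchsbaum formula delivers $\depth(R) \ge \depth_R(\Hom_R(M,M)) \ge s$, so both $R$ and $\Hom_R(M,M)$ have depth at least $s$. The overall strategy is to build a sequence in $\fm$ that is simultaneously $R$-regular and $\Hom_R(M,M)$-regular, and then invoke Lemma~\ref{semi} to upgrade it to an $M$-regular sequence.

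For the base case $n = 1$, the claim is vacuous if $s = 0$; if $s = 1$, then $\fm \notin \Ass(R) \cup \Ass_R(\Hom_R(M,M))$, so prime avoidance yields some $x_1 \in \fm$ regular on both $R$ and $\Hom_R(M,M)$, and the $n = 1$ instance of Lemma~\ref{semi} (which needs no Ext vanishing) forces $x_1$ to be $M$-regular, giving $\depth_R(M) \ge 1 = s$.

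For the inductive step $n \ge 2$, we may again assume $s \ge 1$ and choose $x_1 \in \fm$ regular on $R$ and $\Hom_R(M,M)$, hence $M$-regular as above. Setting $\bar R := R/(x_1)$ and $\bar M := M/x_1 M$, the vanishing $\Ext^1_R(M,M) = 0$ combined with applying $\Hom_R(M,-)$ to $0 \to M \xrightarrow{x_1} M \to \bar M \to 0$ yields
$$\Hom_R(M,M)/x_1\Hom_R(M,M) \;\cong\; \Hom_R(M,\bar M) \;\cong\; \Hom_{\bar R}(\bar M,\bar M).$$
By \cite[pp.~140, Lem.~2.(ii)]{Mat86} the vanishing $\Ext^{1\le i\le n-2}_{\bar R}(\bar M,\bar M) = 0$ also transfers, and $\pd_{\bar R}(\Hom_{\bar R}(\bar M,\bar M)) < \infty$ because $x_1$ is regular on both $R$ and $\Hom_R(M,M)$. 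Applying the inductive hypothesis over $\bar R$ with parameter $n - 1$ then gives
$$\depth_{\bar R}(\bar M) \;\ge\; \inf\{n-1,\, \depth_{\bar R}(\Hom_{\bar R}(\bar M,\bar M))\} \;=\; s - 1,$$
so $\depth_R(M) = \depth_{\bar R}(\bar M) + 1 \ge s$, as desired.

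The technical ingredients required after the initial element is found --- the reduced Ext vanishing, the preservation of finite projective dimension, and the identification of the quotient with the $\bar R$-endomorphism ring of $\bar M$ --- are essentially already packaged in the proof of Lemma~\ref{semi}, so the only genuinely delicate point is the prime-avoidance production of $x_1$, and this is handled precisely by the depth lower bound $\depth_R(\Hom_R(M,M)) \ge s \ge 1$ forced by the finiteness of $\pd_R(\Hom_R(M,M))$.
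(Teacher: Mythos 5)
Your argument is correct, but it replaces the paper's key step with a different (and more elementary) mechanism. The paper simply takes a maximal $\Hom_R(M,M)$-regular sequence ${\bf x}$ of length $m=\inf\{n,\depth_R(\Hom_R(M,M))\}$ and observes that, because $\Hom_R(M,M)$ is a nonzero module of finite projective dimension, Auslander's zero-divisor theorem (\cite[9.4.7 and 9.4.8.(a)]{BH93}) forces ${\bf x}$ to be $R$-regular as well; Lemma~\ref{semi} then upgrades it to an $M$-regular sequence in one stroke. You instead use the Auslander--Buchsbaum formula to get $\depth(R)\ge\depth_R(\Hom_R(M,M))\ge s$ and then manufacture a simultaneously $R$- and $\Hom_R(M,M)$-regular sequence by prime avoidance, wrapped in an induction on $n$ that essentially re-traverses the proof of Lemma~\ref{semi} (passing to $R/(x_1)$ and $M/x_1M$, transporting the Ext vanishing via \cite[pp.~140, Lem.~2.(ii)]{Mat86} and the finiteness of projective dimension). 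What your route buys is the avoidance of the zero-divisor theorem, which is a genuinely deep input, in favor of Auslander--Buchsbaum plus prime avoidance; what it costs is length and some redundancy, since once you have $\depth(R)\ge s$ and $\depth_R(\Hom_R(M,M))\ge s$ you could pick a full sequence of length $s$ regular on $R\oplus\Hom_R(M,M)$ at the outset and invoke Lemma~\ref{semi} directly, with no induction. All the individual steps check out: $\Hom_R(M,M)\neq 0$ via the identity map, $M/x_1M\neq 0$ by Nakayama, and $\inf\{n-1,\depth_R(\Hom_R(M,M))-1\}=s-1$.
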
 

\begin{proof}
Let $m=\inf \{n, \depth_R(\Hom_R(M,M)) \}$. Consider a regular sequence ${\bf x} = x_1,\ldots,x_m$ on $\Hom_R(M,M)$. Since $\Hom_R(M,M)\neq 0$ has finite projective dimension, by the Auslander's zero-divisor conjecture (cf.~\cite[9.4.7 and 9.4.8.(a)]{BH93}), ${\bf x}$ is also $R$-regular.
Hence, by Lemma~\ref{semi}, ${\bf x}$ is $M$-regular. Thus $\depth_R(M) \ge m$.
\end{proof}

For a ring, which is not necessarily local, we  have the following result.

\begin{corollary}\label{serhom}
Let $R$ be a ring, and $M$ be an $R$-module. Suppose there is an integer $n\ge 1$ such that $\Ext^{1\le i\le n-1}_R(M,M)=0$.
\begin{enumerate}[\rm (1)]
    \item
    If $\pd_R(\Hom_R(M,M))<\infty$, then
    $$\depth_{R_{\p}}(M_{\p}) \ge \inf\{n, \depth_{R_{\p}}\big(\Hom_R(M,M)_{\p}\big)\} \text{ for all } \p \in \Spec(R).$$
    \item 
    If $\Hom_R(M,M)$ is $R$-projective, then $M$ satisfies $(\widetilde S_n)$ $($\rm cf.~\ref{para-Sn}$)$.
\end{enumerate}
\end{corollary}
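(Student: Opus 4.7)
The plan is to deduce both parts by localizing at each prime and invoking Proposition~\ref{prop:n-1-semidual-Sn}. Fix $\p\in\Spec(R)$. Since $R$ is Noetherian and $M$ is finitely generated, $\Hom$ commutes with localization, giving $\Hom_R(M,M)_\p\cong \Hom_{R_\p}(M_\p,M_\p)$. Similarly, $\Ext^i_R(M,M)_\p\cong\Ext^i_{R_\p}(M_\p,M_\p)$, so the hypothesis $\Ext^{1\le i\le n-1}_R(M,M)=0$ localizes. If $M_\p=0$, the desired depth inequality is vacuous (with the convention $\depth_{R_\p}(0)=\infty$), so we may assume $M_\p\neq 0$.

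For part (1), localization at $\p$ preserves finite projective dimension: $\pd_{R_\p}(\Hom_{R_\p}(M_\p,M_\p))\le \pd_R(\Hom_R(M,M))<\infty$. The localized data therefore satisfy the hypotheses of Proposition~\ref{prop:n-1-semidual-Sn}, applied to the local ring $R_\p$ and the nonzero $R_\p$-module $M_\p$. That proposition directly yields
\[
\depth_{R_\p}(M_\p)\ge \inf\{n,\depth_{R_\p}(\Hom_{R_\p}(M_\p,M_\p))\}=\inf\{n,\depth_{R_\p}(\Hom_R(M,M)_\p)\},
\]
which is the required inequality.

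For part (2), the hypothesis that $\Hom_R(M,M)$ is $R$-projective implies that $\Hom_R(M,M)_\p$ is a free $R_\p$-module, hence in particular has finite projective dimension, so part (1) applies. Moreover, as $\Hom_R(M,M)_\p$ is free (and nonzero whenever $M_\p\neq 0$), we have $\depth_{R_\p}(\Hom_R(M,M)_\p)=\depth(R_\p)$. Substituting into the conclusion of (1) gives $\depth_{R_\p}(M_\p)\ge \inf\{n,\depth(R_\p)\}$ for every $\p$, which is precisely the condition $(\widetilde S_n)$ by the definition in \ref{para-Sn}.

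The main obstacle is essentially just bookkeeping: making sure $\Hom$ and $\Ext$ commute with localization (which is automatic here since $M$ is finitely generated over a Noetherian ring), handling the trivial case $M_\p=0$, and noting that a projective finitely generated module over a local ring has depth equal to that of the ring. Once these are dispensed with, the result is an immediate pointwise application of Proposition~\ref{prop:n-1-semidual-Sn}.
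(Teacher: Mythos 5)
Your proof is correct and follows the same route as the paper: localize at each prime, dispose of the case $M_\p=0$, and apply Proposition~\ref{prop:n-1-semidual-Sn}, with part (2) following from part (1) because a projective module is locally free (hence of finite projective dimension and, when nonzero, of depth equal to $\depth(R_\p)$). The paper's write-up is merely terser; the substance is identical.
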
  

\begin{proof}
    (1) Let $\p \in \Spec(R)$. If $M_{\p}=0$, then $\depth_{R_{\p}}(M_{\p})=\infty$, and there is nothing to show. When $M_{\p}\neq 0$, the desired inequality follows from Proposition~\ref{prop:n-1-semidual-Sn}.

    (2) It is obvious by (1) as every projective module over $R$ is locally free.
\end{proof}

The theorem below particularly shows that over a local ring $R$ of depth $t$, every $(t-1)$-semidualizing module of finite G-dimension is free.

\begin{theorem}\label{mainsemi}
Let $R$ be a local ring of depth $t$. Let $M$ be a nonzero $R$-module such that $\Ext^{1\le i\le t-1}_R(M,M)=0$ and $\gdim_R(M) <\infty$. If $\Hom_R(M,M) \cong C^{\oplus n}$ for some semidualizing module $C$, then $C\cong R$, and $M$ is free of rank $r$ satisfying $r^2=n$. In particular, if $\Hom_R(M,M) \cong R$, then $M\cong R$.
\end{theorem}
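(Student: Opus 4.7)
The plan is to first establish that $\gdim_R(M)=0$ (equivalently, that $M$ is maximal Cohen--Macaulay in the sense of depth), then deduce freeness of $M$ from Theorem~\ref{thm:pd-Hom-finite-gdim-freeness-criteria}, and finally back-solve the isomorphism $\Hom_R(M,M)\cong C^{\oplus n}$ to identify $C$. For the first step, I first observe that a semidualizing module $C$ satisfies $\depth_R(C)=\depth(R)=t$ (this follows by applying Proposition~\ref{prop:GT}.(2) to the pair $(C,C)$, using $\Hom_R(C,C)\cong R$ and $\Ext^{\ge 1}_R(C,C)=0$), whence $\depth_R(\Hom_R(M,M))=\depth_R(C^{\oplus n})=t$. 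Supposing for contradiction that $s:=\depth_R(M)<t$, the hypothesis $\Ext^{1\le i\le t-1}_R(M,M)=0$ gives $\Ext^{1\le i\le s}_R(M,M)=0$, and Proposition~\ref{prop:GT}.(2) (applied with $n=s=\depth_R(M)$ and $N=M$) yields $\depth_R(\Hom_R(M,M))=s<t$, a contradiction. Hence $\depth_R(M)\ge t$, and since $\gdim_R(M)<\infty$, the Auslander--Bridger equality $\gdim_R(M)+\depth_R(M)=\depth(R)=t$ forces $\depth_R(M)=t$ and $\gdim_R(M)=0$.

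For the second step, since $C$ is semidualizing, we have $\Hom_R(C,\Hom_R(M,M))\cong\Hom_R(C,C^{\oplus n})\cong R^{\oplus n}$, which trivially has finite projective dimension. Therefore Theorem~\ref{thm:pd-Hom-finite-gdim-freeness-criteria} applies to the pair $(M,M)$, and its ``particularly'' clause (combined with $\gdim_R(M)=0$, cf.~\ref{para:G-dim-n-torsion-free}) forces $M$ to be free. Since $M\neq 0$, write $M\cong R^{\oplus r}$ with $r\ge 1$. Then $\Hom_R(M,M)\cong R^{\oplus r^2}$, and comparing with the hypothesis $\Hom_R(M,M)\cong C^{\oplus n}$ we see that $C^{\oplus n}$ is a direct summand of a free module, hence projective, hence free over the local ring $R$; say $C\cong R^{\oplus k}$. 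The semidualizing condition $R\cong\Hom_R(C,C)\cong R^{\oplus k^2}$ forces $k=1$ by the invariance of rank over commutative rings, so $C\cong R$, and the equality $r^2=n$ follows by comparing ranks in $R^{\oplus r^2}\cong R^{\oplus n}$.

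The main obstacle is the first step of proving $\depth_R(M)=t$: it is the only place where the mild Ext-vanishing hypothesis $\Ext^{1\le i\le t-1}_R(M,M)=0$ interacts with the semidualizing structure of $C$ (through the equality $\depth_R(C)=t$) to force $M$ to be maximal Cohen--Macaulay. Once this depth equality is secured, the remaining steps are essentially formal: a single appeal to Theorem~\ref{thm:pd-Hom-finite-gdim-freeness-criteria} together with a rank-count using IBN.
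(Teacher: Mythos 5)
Your proof is correct and follows the same overall architecture as the paper's: establish $\depth_R(M)\ge t$ so that the Auslander--Bridger formula gives $\gdim_R(M)=0$, observe that $\Hom_R(C,\Hom_R(M,M))\cong R^{\oplus n}$ has finite projective dimension, invoke Theorem~\ref{thm:pd-Hom-finite-gdim-freeness-criteria} to get freeness of $M$, and finish with a rank count to identify $C\cong R$ and $r^2=n$. The one place you diverge is the depth step: the paper takes a maximal $R$-regular sequence $\mathbf{x}$, notes it is regular on $C^{\oplus n}\cong\Hom_R(M,M)$, and transfers regularity to $M$ via Lemma~\ref{semi}, whereas you argue by contradiction using the Bass-number identity of Proposition~\ref{prop:GT}.(2) applied to the pair $(M,M)$ (and also to $(C,C)$ to justify $\depth(C)=t$, which the paper simply asserts as standard). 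Both routes use exactly the hypothesis $\Ext_R^{1\le i\le t-1}(M,M)=0$ and are equally valid; your variant is a legitimate, slightly more computational substitute for Lemma~\ref{semi} in this step.
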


\begin{proof}
From the given hypothesis on $\Hom_R(M,M)$, one has that $\depth_R(\Hom_R(M,M)) =\depth(C) =\depth(R)$. Consider an $R$-regular sequence ${\bf x} := x_1,\ldots,x_t$. Since $C$ is semidualizing, $\bf x$ is regular on $C$, and hence regular on $\Hom_R(M,M)$ as well. So, by Lemma~\ref{semi}, $\bf x$ is also regular on $M$. Thus $\depth(M) \ge t$. Therefore, since $\gdim_R(M)<\infty$, by Auslander-Bridger formula, $\gdim_R(M)=0$. Note that $\Hom_R(C,\Hom_R(M,M)) \cong R^{\oplus n}$ is a free $R$-module. So, by virtue of Theorem~\ref{thm:pd-Hom-finite-gdim-freeness-criteria}, $M$ is free, i.e., $M\cong R^{\oplus r}$ for some $r\ge 1$. Consequently, $\Hom_R(R^{\oplus r},R^{\oplus r}) \cong C^{\oplus n}$, which implies that $C$ is free, and hence $C\cong R$ (as $C$ is semidualizing). Comparing rank of the free modules, one obtains that $r^2=n$. The last part follows from the first one.
\end{proof} 

 As a consequence of Theorem~\ref{mainsemi}, we get the following result, which particularly gives a complete affirmative answer to Question~\ref{ques1:Tony-Se}.  

\begin{corollary}\label{qa}
Let $R$ be a ring $($not necessarily local$)$ of finite dimension $d$. Let $M$ be an $R$-module such that $\Hom_R(M,M)$ is $R$-projective and $\Ext^{1\le i\le d-1}_R(M,M) = 0$. The following statements hold true.
\begin{enumerate}[\rm (1)]
    \item If $\gdim_R(M)<\infty$, then $M$ is projective.
    \item If $R$ is Gorenstein, then $M$ is projective. Particularly, if $R$ is Gorenstein, then every $(d-1)$-semidualizing $R$-module is in fact semidualizing.
\end{enumerate}
\end{corollary}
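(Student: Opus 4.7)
The plan is to reduce both statements to the local case and apply Theorem~\ref{mainsemi}. Since $M$ is a finitely generated module over a Noetherian ring, $M$ is projective if and only if $M_\p$ is free for every $\p \in \Spec(R)$. So the strategy is to fix an arbitrary prime $\p$ and prove freeness of $M_\p$ over $R_\p$; in each case the hypotheses have already been designed to survive localization.

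For part (1), fix $\p \in \Spec(R)$ with $M_\p \neq 0$ and set $t := \depth(R_\p)$. Since $\dim(R_\p) \le \dim(R) = d$, one has $t - 1 \le d - 1$, so localizing the given Ext-vanishing yields $\Ext_{R_\p}^{1 \le i \le t-1}(M_\p, M_\p) = 0$. Moreover, $\Hom_{R_\p}(M_\p, M_\p) \cong \Hom_R(M,M)_\p$ is projective over the local ring $R_\p$, hence free, say isomorphic to $R_\p^{\oplus n}$ for some $n$ (depending on $\p$). Since $R_\p$ is a semidualizing module over itself and G-dimension localizes (so $\gdim_{R_\p}(M_\p) < \infty$), Theorem~\ref{mainsemi} applied with the semidualizing module $C := R_\p$ forces $M_\p$ to be free. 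This proves (1).

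For part (2), the extra ingredient is the standard fact that over a Gorenstein ring $R$ every finitely generated $R$-module has finite G-dimension; hence $\gdim_R(M) < \infty$, and (1) applies to give projectivity of $M$. The final sentence is then immediate: if $M$ is $(d-1)$-semidualizing, then by definition $\Hom_R(M,M) \cong R$ (which is projective) and $\Ext_R^{1 \le i \le d-1}(M,M) = 0$, so (2) yields that $M$ is projective; combined with the given isomorphism $\Hom_R(M,M) \cong R$ and the automatic vanishing $\Ext_R^{i}(M,M) = 0$ for all $i \ge 1$ coming from projectivity, $M$ is semidualizing.

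I do not foresee any serious obstacle: the proof is essentially a localization argument packaging Theorem~\ref{mainsemi}. The only point worth checking carefully is that all pieces of the hypothesis, namely the finite projective dimension of $\Hom_R(M,M)$, the partial vanishing of $\Ext^i_R(M,M)$, and the finiteness of $\gdim_R(M)$, genuinely descend to $R_\p$ for every prime $\p$, and that the conclusion of Theorem~\ref{mainsemi} at each $\p$ (freeness of $M_\p$) is exactly what we need to globally conclude projectivity.
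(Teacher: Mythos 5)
Your proposal is correct and follows essentially the same route as the paper: localize at each prime, note that the hypotheses (freeness of $\Hom_{R_\p}(M_\p,M_\p)$, the Ext vanishing up to $\depth(R_\p)-1\le d-1$, and finiteness of G-dimension) descend to $R_\p$, and apply Theorem~\ref{mainsemi} with $C=R_\p$; part (2) then follows since Gorenstein rings force $\gdim_R(M)<\infty$. No gaps.
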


\begin{proof}
    (1) Let $\p \in \Spec(R)$. Localizing the given conditions, $\Hom_{R_{\p}}(M_{\p},M_{\p})$ is $R_{\p}$-free, $\Ext^{1\le i\le d-1}_{R_{\p}}(M_{\p},M_{\p}) = 0$ and $\gdim_{R_{\p}}(M_{\p})<\infty$. Therefore, since $\depth(R_{\p}) \le \dim(R_{\p}) \le d$, by Theorem~\ref{mainsemi}, $M_{\p}$ is free. Thus $M$ is projective.

    (2) If $R$ is Gorenstein, then $\gdim_R(M)<\infty$, and hence $M$ is projective by (1). The last part follows because a projective $R$-module $M$ satisfying $\Hom_R(M,M)\cong R$ is in fact semidualizing.
\end{proof}

\begin{remark}
    We got to know from Mohsen Asgharzadeh that \Cref{qa}.(2) is shown in \cite[Prop.~8.6]{Asg18} for Gorenstein local rings. However, \Cref{qa}.(1) highly strengthens \cite[Prop.~8.6]{Asg18}.
\end{remark}

In the context of \ref{para:property-in-codim}, the following corollary is immediate from Corollary~\ref{qa}.(2).

\begin{corollary}\label{35}
Let $R$ be a ring locally Gorenstein in codimension $n$. Let $M$ be an $R$-module such that $\Hom_R(M,M)$ is $R$-projective and $\Ext^{1\le i\le n-1}_R(M,M)=0$. Then $M$ is locally free in codimension $n$. 
\end{corollary}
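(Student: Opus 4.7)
The plan is to reduce the statement to Corollary~\ref{qa}.(2) by localizing at each prime of co-dimension at most $n$. Fix a prime $\p \in \Spec(R)$ with $d_\p := \dim(R_\p) \le n$. By the hypothesis that $R$ is locally Gorenstein in co-dimension $n$, the ring $R_\p$ is a Gorenstein local ring of dimension $d_\p$, so in particular $d_\p$ is finite.

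Next, I would check that the hypotheses of Corollary~\ref{qa}.(2) localize to $R_\p$. Since $M$ is finitely generated (Setup~\ref{setup}), the natural maps
\[
    \Hom_R(M,M)_\p \longrightarrow \Hom_{R_\p}(M_\p,M_\p) \quad \text{and} \quad \Ext_R^i(M,M)_\p \longrightarrow \Ext_{R_\p}^i(M_\p,M_\p)
\]
are isomorphisms for every $i \ge 0$. Consequently, $\Hom_{R_\p}(M_\p,M_\p)$ is projective (hence free, since $R_\p$ is local) over $R_\p$, and $\Ext_{R_\p}^{1 \le i \le n-1}(M_\p,M_\p) = 0$. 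Because $d_\p - 1 \le n - 1$, this in particular gives $\Ext_{R_\p}^{1 \le i \le d_\p - 1}(M_\p,M_\p) = 0$.

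Now Corollary~\ref{qa}.(2), applied to the Gorenstein ring $R_\p$ of finite dimension $d_\p$ together with the module $M_\p$, yields that $M_\p$ is projective, i.e., free, over $R_\p$. Since $\p$ was an arbitrary prime with $\dim(R_\p) \le n$, this proves that $M$ is locally free in co-dimension $n$, as desired. The argument is essentially mechanical: the real content lies in Corollary~\ref{qa}.(2), and the only bookkeeping required is the well-known fact that finite generation of $M$ makes both $\Hom_R(M,-)$ and $\Ext_R^i(M,-)$ commute with localization. Thus there is no genuine obstacle to overcome here.
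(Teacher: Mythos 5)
Your proof is correct and is exactly the argument the paper intends: the paper declares the corollary ``immediate from Corollary~\ref{qa}.(2)'', and your write-up simply makes explicit the localization bookkeeping (compatibility of $\Hom$ and $\Ext$ with localization for finitely generated modules, and the inequality $\dim(R_\p)-1\le n-1$) that justifies that claim.
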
  




Since the proof of \cite[Lem.~2.3.(1)$\Rightarrow$(2)]{Da10} does not require the ring to be CM, and both the hypothesis and the conclusion of the result localizes, we get the following lemma. 

\begin{lemma}\cite[Lem.~2.3]{Da10}\label{dao}
Let $R$ be a ring $($not necessarily local$)$, and $M,N$ be $R$-modules. Fix an integer $n>1$. Suppose that
\begin{enumerate}[\rm (1)]
    \item $M$ is locally free in codimension $n$,
    \item $N$ satisfies $(S_n)$, and $\Hom_R(M,N)$ satisfies $(S_{n+1})$.
\end{enumerate}
Then $\Ext_R^i(M,N)=0$ for all $1 \le i \le n-1$.
\end{lemma}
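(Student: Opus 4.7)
The plan is to reduce to the local case and then recycle Dao's argument verbatim. Since $\Ext_R^i(M,N)$ and $\Hom_R(M,N)$ commute with localization for finitely generated modules over a Noetherian ring, the conclusion $\Ext_R^i(M,N) = 0$ is equivalent to the vanishing of $\Ext_{R_{\mathfrak{p}}}^i(M_{\mathfrak{p}}, N_{\mathfrak{p}})$ for every $\mathfrak{p} \in \Spec(R)$. The three hypotheses---local freeness of $M$ in co-dimension $n$, the Serre condition $(S_n)$ on $N$, and $(S_{n+1})$ on $\Hom_R(M,N)$---are each inherited by localization at an arbitrary prime, because if $\mathfrak{q} \subseteq \mathfrak{p}$ then $\dim(R_{\mathfrak{q}}) \le \dim(R_{\mathfrak{p}})$ and the depth inequality defining $(S_k)$ at $\mathfrak{q}$ is simply a special case of the one at $\mathfrak{p}$. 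So we may assume $R$ is local with maximal ideal $\mathfrak{m}$ and dimension $d$.

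Next, I would argue by induction on $d$. When $d \le n$, hypothesis (1) forces $M$ to be free, and the conclusion is immediate. For $d > n$, the inductive hypothesis applied at each non-maximal prime $\mathfrak{p}$ yields $\Ext_{R_{\mathfrak{p}}}^i(M_{\mathfrak{p}}, N_{\mathfrak{p}}) = 0$ for $1 \le i \le n-1$, so every module $E_i := \Ext_R^i(M,N)$ with $1 \le i \le n-1$ is supported only at $\{\mathfrak{m}\}$ and therefore has finite length. In particular, if any such $E_i$ is nonzero, then $\depth_R(E_i) = 0$.

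To derive a contradiction from $E_i \ne 0$, I would then run the long exact sequence obtained from a free presentation $0 \to K \to F \to M \to 0$. Since $K$ agrees with a syzygy of $M$ locally where $M$ is free, $K$ is also locally free in co-dimension $n$. Applying $\Hom_R(-,N)$ produces the four-term sequence $0 \to \Hom_R(M,N) \to \Hom_R(F,N) \to \Hom_R(K,N) \to E_1 \to 0$ together with isomorphisms $E_i \cong \Ext_R^{i-1}(K,N)$ for $i \ge 2$. Combining $\depth_R \Hom_R(M,N) \ge \min(n+1,d)$ and $\depth_R \Hom_R(F,N) \ge \min(n,d)$ with the depth lemma gives a lower bound for $\depth_R \Hom_R(K,N)$ that contradicts the finite-length bound on $E_i$ unless $E_i = 0$. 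This is exactly the chain of inequalities executed in \cite[Lem.~2.3, (1)$\Rightarrow$(2)]{Da10}, where one iterates the argument on successive syzygies of $M$.

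The one delicate point---and the step I would expect to be the main obstacle---is confirming that every inequality in Dao's chain remains valid when $R$ is only Noetherian local rather than CM. But Dao only invokes the depth lemma in short exact sequences and the Serre hypotheses on $N$ and $\Hom_R(M,N)$; neither the unmixedness of $R$ nor the equality $\depth R = \dim R$ ever enters the bookkeeping. Consequently his inequalities propagate unchanged, and the induction closes in the same way as in the CM case.
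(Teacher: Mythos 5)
Your proposal is correct and takes essentially the same route as the paper, whose entire ``proof'' is the observation that Dao's argument for \cite[Lem.~2.3, (1)$\Rightarrow$(2)]{Da10} never uses the Cohen--Macaulay hypothesis and that both the hypotheses and the conclusion localize. Your reduction to the local case, the induction on dimension to force the $\Ext$ modules to have finite length, and the depth chase along a free presentation are precisely Dao's argument, so the proposal just makes explicit what the paper delegates to the citation.
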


The following result gives a partial affirmative answer to Question~\ref{ques2:Tony-Se}.

\begin{theorem}\label{thm:S1-Cl}
Let $R$ be a normal domain satisfying $(S_3)$.
\begin{enumerate}[\rm (1)]
    \item If $R$ is locally Gorenstein in codimension $2$, then $\fS_0^1(R)$ is a subgroup of $\cl(R)$.
    \item If $R$ satisfies $(R_2)$, then $ \fS^1_0(R) = \cl(R) $.
\end{enumerate}
\end{theorem}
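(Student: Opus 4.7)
The plan is to use the set inclusion $\fS_0^1(R) \subseteq \cl(R)$ already recorded in \ref{para:S1-subset-cl-R}. Then (1) reduces to the subgroup criterion and (2) to the reverse inclusion.

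For (2), given a reflexive rank $1$ module $M$, I would first observe that $\Hom_R(M,M)\cong R$ is forced by the group structure on $\cl(R)$ from \ref{para:class-group}, since $[M]-[M]=[\Hom_R(M,M)]$ must equal the identity $[R]$. Thus it remains to show $\Ext^1_R(M,M)=0$, which I would deduce from Lemma~\ref{dao} with $n=2$ and $N=M$: reflexivity of $M$ supplies $(S_2)$; $\Hom_R(M,M)\cong R$ satisfies $(S_3)$ by hypothesis; and $M$ is locally free in co-dimension $2$ because $(R_2)$ makes $R_\fp$ a regular local ring (hence a UFD, by Auslander--Buchsbaum) for every $\fp$ with $\dim R_\fp\le 2$, over which rank $1$ reflexive modules are free.

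For (1), given $[M],[N]\in\fS_0^1(R)$, set $L:=\Hom_R(N,M)$, which is reflexive of rank $1$ by the class-group structure. Applying the subgroup criterion (using that $[R]\in\fS_0^1(R)$ trivially and $[M]-[N]=[L]$), I need to verify $[L]\in\fS_0^1(R)$. Again $\Hom_R(L,L)\cong R$ comes from $[L]-[L]=[R]$ in $\cl(R)$, so the task is to show $\Ext^1_R(L,L)=0$. This will once more come from Lemma~\ref{dao} with $n=2$ and $M=N=L$; reflexivity and $(S_3)$ are immediate as before, and the non-trivial step is to show that $L$ is locally free in co-dimension $2$. For this I would invoke Corollary~\ref{35}: since $M$ and $N$ are $1$-semidualizing (so $\Hom_R(M,M)\cong R\cong \Hom_R(N,N)$ are projective and $\Ext^1_R(M,M)=0=\Ext^1_R(N,N)$) and $R$ is locally Gorenstein in co-dimension $2$, Corollary~\ref{35} forces both $M$ and $N$ to be locally free in co-dimension $2$. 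At such a prime $\fp$, both $M_\fp$ and $N_\fp$ are free of rank $1$, hence $L_\fp=\Hom_{R_\fp}(N_\fp,M_\fp)\cong R_\fp$ is free, yielding the needed local freeness of $L$.

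The conceptual heart of the argument is Corollary~\ref{35}, which converts the homological hypothesis (vanishing of $\Ext^1$ and projectivity of $\Hom$) into local freeness in the right co-dimension; the rest is bookkeeping among reflexive rank $1$ modules, the identification $\Hom_R(-,-)$ inside $\cl(R)$, and the $\Ext$-vanishing criterion of Lemma~\ref{dao}. The main obstacle I anticipate is precisely the transition from $1$-semidualizing data on $M,N$ to local freeness in co-dimension $2$—everything else is essentially forced once that step is in hand.
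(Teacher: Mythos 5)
Your proposal is correct and takes essentially the same route as the paper: Corollary~\ref{35} (resp.\ the $(R_2)$ hypothesis) supplies local freeness in co-dimension $2$, and Lemma~\ref{dao} with $n=2$ then kills $\Ext^1$. The only cosmetic differences are that you derive $(S_2)$ for $\Hom_R(N,M)$ from its reflexivity rather than via Corollary~\ref{serhom}.(2), and in part (2) you invoke the UFD property of regular local rings where the paper applies the Auslander--Buchsbaum formula to the $(S_2)$ module $M_\fp$; both variants are valid.
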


\begin{proof}
    Since $R$ is a normal domain, in view of \ref{para:class-group} and \ref{para:S1-subset-cl-R}, we already have that $\cl(R)$ is an abelian group, and $\fS_0^1(R)$ is a subset of $ \cl(R) $.

    (1) Clearly, $\fS_0^1(R)$ contains $[R]$, the identity element of $\cl(R)$. Let $M,N \in \fS_0^1(R)$. Then $ [N]-[M] = [\Hom_R(M,N)] \in \cl(R)$. We need to show that $ [\Hom_R(M,N)] \in \fS^1_0(R)$, i.e., $\Hom_R(M,N)$ is $1$-semidualizing. Since $ \cl(R) \subseteq \fS^0_0(R) $ (by \ref{para:Cl-subset-S0}), $\Hom_R(M,N)\in \fS^0_0(R)$. So it is enough to prove that
\begin{equation}\label{eqn:Ext1-vanishing}
    \Ext^1_R\big( \Hom_R(M,N),\ \Hom_R(M,N) \big) = 0.
\end{equation}
Since $M,N\in \fS^1_0(R)$, by Corollary~\ref{35}, both $M$ and $N$ are locally free in codimension $2$. So $\Hom_R(M,N)$ is locally free in codimension $2$. As $N\in \fS^1_0(R)$, by Corollary~\ref{serhom}.(2), $N$ satisfies $(\widetilde S_2)$, and hence $\Hom_R(M,N)$ satisfies $(\widetilde S_2)$, see, e.g., \cite[1.4.19]{BH93}. It follows that $\Hom_R(M,N)$ satisfies $(S_2)$ as $R$ satisfies $(S_2)$.  
Since $\Hom_R(M,N)\in \fS^0_0(R)$, $\Hom_R(\Hom_R(M,N),\Hom_R(M,N))\cong R$, which satisfies $(S_3)$. Therefore the equality \eqref{eqn:Ext1-vanishing} follows from Lemma~\ref{dao}. 

(2) We already have $\fS_0^1(R) \subseteq \cl(R)$. To prove the other inclusion, let $[M] \in \cl(R)$. Then, by the definition, $M$ is a reflexive module of rank $1$. Hence $M$ is a second syzygy module. So, by the depth lemma, $M$ satisfies $(\widetilde S_2)$, and hence $(S_2)$ as $R$ satisfies $(S_2)$. Thus $\depth_{R_{\p}}(M_{\p})\ge \dim(R_{\p})$ for all $\p \in \Spec(R)$ with $\dim(R_{\p})\le 2$. Since $R$ is regular in codimension $2$, by the Auslander-Buchsbaum formula, it follows that $M$ is locally free in codimension $2$. Since $[M]\in \cl(R) \subseteq \fS^0_0(R)$ (cf.~\ref{para:Cl-subset-S0}), $\Hom_R(M,M)\cong R$, which satisfies $(S_3)$. So, by Lemma~\ref{dao}, $\Ext^1_R(M,M)=0$. Thus $[M] \in \fS^1_0(R)$.
\end{proof}

\begin{para}\label{para:Deter-rings}
    Let $B$ be a commutative Noetherian ring. Let $X$ be an $m\times n$ matrix of indeterminates over $B$.
    Set $B[X] := B[X_{ij}:1\le i\le m, 1\le j\le n]$, where $X_{ij}$ are the entries of $X$. 
    Consider an integer $t$ satisfying $1\le t \le \min\{m,n\}$. Let $I_t(X)$ be the ideal of $B[X]$ generated by the $t\times t$ minors of $X$. The quotient ring $R_t(X) := B[X]/I_t(X)$ is called the determinantal ring of $t$-minors of $X$ over $B$. The following properties hold for determinantal rings.    
    \begin{enumerate}[(1)]
        \item 
        If $B$ is a (normal) domain, then so is $R_t(X)$, see, e.g., \cite[5.3.(a) and 6.3]{BV}.
        \item 
        If $B$ is CM, by \cite[Cor.~4]{HE}, it follows that $R_t(X)$ is also CM.
        \item 
        If $B$ satisfies $(R_2)$, then so is $R_t(X)$, see, e.g., \cite[5.3.(a) and 6.12]{BV}.
    \end{enumerate}
\end{para}

Since CM domains satisfying $(R_2)$ are normal, the following corollary is immediate from \ref{para:Deter-rings} and Theorem~\ref{thm:S1-Cl}.(2).

\begin{corollary}\label{cor:on-deter-rings}
    Let $B$ be a CM domain satisfying $(R_2)$. Then, the determinantal ring $R_t(X)= B[X]/I_t(X)$ is  also a CM domain satisfying $(R_2)$, consequently, $ \fS^1_0(R_t(X)) = \cl(R_t(X)) $.
\end{corollary}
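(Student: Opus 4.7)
The plan is to reduce the corollary to a direct invocation of Theorem~\ref{thm:S1-Cl}.(2), using the facts on determinantal rings collected in \ref{para:Deter-rings}. The hypothesis of that theorem requires three things about the base ring: it must be a normal domain, it must satisfy $(S_3)$, and it must satisfy $(R_2)$. So the task is simply to verify each of these for $R_t(X)$.

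First I would observe that $B$ itself is a normal domain: being CM, it satisfies $(S_n)$ for every $n$ (in particular $(S_2)$), and $(R_2)$ trivially implies $(R_1)$, so Serre's normality criterion applies. Hence $B$ is a normal CM domain satisfying $(R_2)$. Next I would cite the three items in \ref{para:Deter-rings}: item (1) tells us $R_t(X)$ is a domain, item (2) tells us $R_t(X)$ is CM, and item (3) tells us $R_t(X)$ satisfies $(R_2)$. Since $R_t(X)$ is CM it in particular satisfies $(S_3)$, and the same Serre criterion argument as for $B$ shows it is normal.

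At this point, $R_t(X)$ satisfies every hypothesis of Theorem~\ref{thm:S1-Cl}.(2), so we directly conclude $\fS_0^1(R_t(X)) = \cl(R_t(X))$, which is the desired equality. There is essentially no technical obstacle here: the whole content of the corollary is that the class of rings to which Theorem~\ref{thm:S1-Cl}.(2) applies is closed under passing to determinantal rings of generic matrices, and this closure is immediate once the three preservation properties of \ref{para:Deter-rings} are acknowledged. The only minor care point is to remember to invoke Serre's normality criterion (rather than normality of $B$ being assumed outright), since the hypothesis of the corollary gives CM and $(R_2)$ instead of normality explicitly.
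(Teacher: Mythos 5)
Your proposal is correct and follows essentially the same route as the paper: the paper's proof is precisely the one-line observation that CM domains satisfying $(R_2)$ are normal (by Serre's criterion), combined with the three preservation properties listed in \ref{para:Deter-rings} and an appeal to Theorem~\ref{thm:S1-Cl}.(2). No gaps.
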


\end{document}